\theoremstyle{plain}
\newtheorem{thm}{Theorem}[section]
\newtheorem{lem}[thm]{Lemma}
\newtheorem{prop}[thm]{Proposition}
\newtheorem{cor}[thm]{Corollary}
\theoremstyle{definition}
\newtheorem{defn}{Definition}[section]
\newtheorem{conj}{Conjecture}[section]
\newtheorem{example}{Example}[section]
\theoremstyle{remark}
\newtheorem{rem}{Remark}[section]
\newcommand{\gl}{\mathrm{GL}}
\newcommand{\SL}{\mathrm{SL}}
\newcommand{\bbg}{\mathbb{G}}
\newcommand{\cf}{\mathcal{F}}
\newcommand{\co}{\mathcal{O}}
\newcommand{\cl}{\mathcal{L}}
\newcommand{\cp}{\mathcal{P}}
\newcommand{\xx}{\mathscr{X}}
\newcommand{\ka}{\mathfrak{a}}
\newcommand{\kg}{\mathfrak{g}}
\newcommand{\kp}{\mathfrak{p}}
\newcommand{\kt}{\mathfrak{t}}
\newcommand{\ko}{\mathfrak{o}}
\newcommand{\bn}{\mathbf{N}}
\newcommand{\bz}{\mathbf{Z}}
\newcommand{\br}{\mathbf{R}}
\newcommand{\bq}{\mathbf{Q}}
\newcommand{\bc}{\mathbf{C}}
\newcommand{\ba}{\mathbf{A}}
\newcommand{\baa}{\mathbf{a}}
\newcommand{\bnn}{\mathbf{n}}
\newcommand{\bii}{\mathbf{i}}
\newcommand{\bxx}{\mathbf{x}}
\newcommand{\byy}{\mathbf{y}}
\newcommand{\bjj}{\mathbf{j}}
\newcommand{\fq}{\mathbf{F}_{q}}
\newcommand{\val}{\mathrm{val}}
\newcommand{\lie}{\mathrm{Lie}}
\newcommand{\Ad}{\mathrm{Ad}}
\newcommand{\ad}{\mathrm{ad}}
\newcommand{\sch}{\mathrm{Sch}}
\newcommand{\Hom}{\mathrm{Hom}}
\newcommand{\ec}{\mathrm{Ec}}
\newcommand{\reg}{\mathrm{reg}}
\newcommand{\wt}{\mathrm{wt}}
\newcommand{\wtt}{\widetilde{T}}
\newcommand{\ep}{\epsilon}
\author{Zongbin \textsc{Chen}}
\address{EPFL SB Mathgeom/Geom, MA B1 447\\
Lausanne, CH-1015, Switzerland} 
\email{zongbin.chen@epfl.ch} 
\title{Truncated affine grassmannians and truncated affine Springer fibers for $\gl_{3}$}
\begin{document}
\maketitle
%\tableofcontents

\begin{abstract}

We state a conjecture (see \S3.4) on how to construct affine pavings for cohomologically pure projective algebraic varieties, which admit an action of torus such that the fixed points and $1$-dimensional orbits are finite. Experiments on the affine grassmannian for $\gl_{3}$ under the guideline of this conjecture, together with the work of Berenstein-Fomin-Zelevinsky and Kamnitzer, have led to the conjecture that the truncated affine grassmannians for $\gl_{r+1}$ admit affine pavings.

For the group $\gl_{3}$, we construct affine pavings for the truncated affine grassmannians, and we use it to study the affine Springer fibers. In particular, we find a family of truncated affine Springer fibers which are cohomologically pure in the sense of Deligne.

\end{abstract}

\section{Introduction}

Let $k=\fq$, we fix an algebraic closure $\bar{k}$ of $k$. Let $F=k((\ep))$ be the field of Laurent series with coefficients in $k$, $\co=k[[\ep]]$ the ring of integers of $F$, $\kp=\ep k[[\ep]]$ the maximal ideal of $\co$. Let $\val:F^{\times}\to \bz$ be the discrete valuation on $F^{\times}$ normalized by $\val(\ep)=1$.

Let $G=\gl_{r+1}$, let $T$ be the maximal torus of $G$ of the diagonal matrices. Their Lie algebras will be denoted by the corresponding Gothic letters. Let $K=G(\co)$ be the standard maximal compact open subgroup of $G(F)$. The affine grassmannian $\xx=G(F)/K$ is the ind-$k$-scheme such that 
$$
\xx(\mathbf{F}_{q^{n}})=G(\mathbf{F}_{q^{n}}((\ep)))/G(\mathbf{F}_{q^{n}}[[\ep]]),\quad \forall \, n\in \bn.
$$

Let $B_{0}$ be the Borel subgroup of $G$ of the upper triangular matrices, let $X_{*}^{+}(T)$ be the semigroup of dominant cocharaters of $T$ with respect to $B_{0}$. For any $\lambda\in X_{*}^{+}(T)$ which is at the same time a dominant character of $G^{\vee}=\gl_{d}$, the \emph{geometric Satake isomorphism} of Ginzburg \cite{ginz}, and Mirkovic-Vilonen \cite{mv} states that the intersection cohomology of the affine Schubert variety $\sch(\lambda)=\overline{K\ep^{\lambda}K/K}$ gives a geometric realization of the irreducible representation $L_{\lambda}$ of $G^{\vee}$ of highest weight $\lambda$. For any $\mu\in X_{*}(T),\,\mu\prec \lambda$, Mirkovic and Vilonen have constructed a family of closed algebraic sub varieties which give a basis of the weight $\mu$ eigenspace $IH^{*}(\sch(\lambda))_{\mu}$: They are closures of irreducible components of the intersection 
$$
\sch(\lambda) \cap U_{0}^{-}(F)\ep^{-\mu}K/K,
$$ 
where $U_{0}^{-}$ is the unipotent radical of the Borel subgroup $B_{0}^{-}$ of $G$ opposite to $B_{0}$. These algebraic sub varieties are called \emph{Mirkovic-Vilonen cycles}. It is known that they have the same dimension $\rho(\lambda+\mu)$, where $\rho$ is the half sum of the positive roots of $\kg$ with respect to $B_{0}$.

We will identify $X_{*}(T)\otimes \br$ with $\kt$. Let $\cp(T)$ be the set of Borel subgroups of $G$ containing $T$. For any $x\in \xx$, $B\in \cp(T)$, let $f_{B}(x)\in X_{*}(T)\subset \kt$ be the unique cocharacter $\nu$ such that $x\in U_{B}(F)\ep^{\nu}K/K$, where $U_{B}$ is the unipotent radical of $B$. Let $\ec(x)$ be the convex envelope of $(f_{B}(x))_{B\in \cp(T)}$. Given a generic point $x$ in a Mirkovic-Vilonen cycle, the polytope $\ec(x)$ is independent of the choice of $x$, it is called \emph{Mirkovic-Vilonen polytope}. It is known that the Mirkovic-Vilonen cycles and Mirkovic-Vilonen polytopes determine each other.

Our interest in the Mirkovic-Vilonen cycles comes from the problem of paving the affine Springer fibers. Recall that for a regular element $\gamma\in \kt(\co)$, the affine Springer fiber at $\gamma$,
$$
\xx_{\gamma}=\{g\in G(F)/K\,\mid\,\Ad(g^{-1})\gamma \in \kg(\co)\},
$$ 
has been introduced by Kazhdan and Lusztig \cite{kl}. It has been used by Goresky, Kottwitz and Macpherson \cite{gkm1} to prove the fundamental lemma of Langlands-Shelstad in the unramified case, under the following hypothesis:
 
\begin{conj}[Goresky-Kottwitz-Macpherson]\label{gkmconj}

The cohomology of $\xx_{\gamma}$ is pure in the sense of Deligne, i.e. the eigenvalues of the action of frobenius $\mathrm{Fr}_{q}$ on $H^{i}(\xx_{\gamma,\bar{k}}, \overline{\bq}_{l})$ have an absolute value of $q^{i/2}$ for any embedding $\overline{\bq}_{l}\to \bc$. 

\end{conj}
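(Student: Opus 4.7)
The plan is to establish purity by constructing an affine paving of the finite-dimensional truncations of $\xx_\gamma$ and invoking the standard fact that a projective variety admitting such a paving has $\ell$-adic cohomology concentrated in even degrees with $H^{2i}$ pure of weight $2i$.

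As a first reduction, although $\xx_\gamma$ is not of finite type, it is the increasing union of the closed projective subschemes $\xx_\gamma^{\le N}:=\xx_\gamma\cap \xx^{\le N}$, where $\xx^{\le N}$ denotes the truncated affine grassmannian studied in the paper. Only finitely many truncations contribute to cohomology in any fixed degree, so it suffices to establish simultaneous purity of every $\xx_\gamma^{\le N}$ in a compatible way.

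For the main step I would use the torus action. Since $\gamma\in \kt(\co)$, the maximal torus $T$ (and in fact $T(F)$) acts on $\xx$ and stabilises both $\xx^{\le N}$ and $\xx_\gamma^{\le N}$; the $T(F)$-fixed points in $\xx_\gamma^{\le N}$ form the finite set $\{\ep^{\nu}K/K\mid \nu\in X_{*}(T),\,\ep^{\nu}K/K\in \xx^{\le N}\}$. The paper constructs (for $\gl_3$, conjecturally for $\gl_{r+1}$) an affine paving of the ambient $\xx^{\le N}$ guided by Mirkovic--Vilonen polytopes and the Berenstein--Fomin--Zelevinsky / Kamnitzer combinatorics. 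I would arrange, by choosing a sufficiently generic one-parameter subgroup $\lambda:\bbg_m\to T$, that this paving coincides with a Bia\l ynicki-Birula decomposition of $\xx^{\le N}$. The same $\lambda$ then produces a Bia\l ynicki-Birula-type decomposition of $\xx_\gamma^{\le N}$ into attracting cells of the listed fixed points, and affineness of these cells on the Springer fiber would follow as soon as each ambient cell $C$ meets $\xx_\gamma^{\le N}$ in a linear subspace of $C$.

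The main obstacle is precisely this compatibility. The MV-polytope paving is not manifestly a Bia\l ynicki-Birula decomposition, and even granting this, showing that the Springer condition $\Ad(g^{-1})\gamma\in \kg(\co)$ cuts out a linear subspace inside each cell requires fine combinatorial control of the equations on that cell, specific to the interplay between the MV data indexing $C$ and the element $\gamma$. It is this bottleneck that restricts the paper's positive results to a distinguished family of truncated affine Springer fibers in the $\gl_3$ case; overcoming it in full generality, presumably through new combinatorial input on how MV cells intersect affine Springer fibers, is what the general Goresky--Kottwitz--MacPherson conjecture demands.
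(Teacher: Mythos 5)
You have offered an outline, not a proof, and you correctly acknowledge that the key step is open. This is appropriate: the statement is a \emph{conjecture}, and the paper does not prove it either. What the paper actually does is prove purity for a restricted family of truncated affine Springer fibers in the $\gl_3$ case (Theorem~\ref{truncationpure}), and your description of the bottleneck --- showing that the Springer condition cuts out a linear subspace of each MV/contracting cell, which requires fine combinatorial control --- is an accurate summary of what is and is not established.

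One point where your sketch diverges from the paper's reduction and is noticeably weaker: you propose to reduce the conjecture to simultaneous purity of the closed projective truncations $\xx_\gamma^{\le N}$, arguing that ``only finitely many truncations contribute to cohomology in any fixed degree.'' This is not how the paper handles the passage from infinite to finite type, and the assertion as stated is dubious because $\xx_\gamma$ has infinitely many irreducible components of the same dimension, all contributing to top cohomology. The paper instead uses the free action of the lattice $\Lambda$ on the set of irreducible components: it constructs a projective \emph{fundamental domain} $F_\gamma$ of finite type (from \cite{chen2}) and reduces the GKM conjecture cleanly to purity of $F_\gamma$ (Conjecture~\ref{fundamental}). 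That reduction is what makes the finite-type problem genuinely equivalent to the original; an arbitrary exhaustion by truncations does not have that property without further argument. Your description of the main step --- $\widetilde T$-action, finitely many fixed points, Bia\l ynicki-Birula-style attracting cells indexed by MV data, affineness of cells intersected with the Springer fiber --- matches the paper's strategy in \S3.4 and \S4. You should also be aware that the fixed points alone do not suffice for the method here: the paper works with the graph $\Gamma$ of fixed points \emph{and} $1$-dimensional $\widetilde T$-orbits, and chooses the paving order by minimising a formal Poincar\'e polynomial $P^{\mathfrak{o}}(t)$ over total orders of $\Gamma$; a generic one-parameter subgroup $\lambda$ alone does not produce the right order.

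In short: no gap beyond the openly acknowledged one, but replace the reduction-by-truncation argument with the paper's reduction to the fundamental domain $F_\gamma$, and note that the paving strategy is governed by the $1$-skeleton $\Gamma$ rather than by a single choice of cocharacter.
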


The conjecture has been verified in several particular cases. In \cite{gkm2}, \cite{l}, \cite{chen1}, the authors have found affine pavings of $\xx_{\gamma}$. 

The affine Springer fibers have a large symmetry group. The free abelian group $\Lambda$ generated by $\chi(\ep),\, \chi\in X_{*}(T)$ acts simply and transitively on the set of irreducible components of $\xx_{\gamma}$. In our paper \cite{chen2}, we construct a fundamental domain $F_{\gamma}$ of $\xx_{\gamma}$ with respect to this action, which is a projective algebraic variety of finite type. And we are able to reduce the above conjecture to:

\begin{conj}\label{fundamental}
The cohomology of $F_{\gamma}$ is pure in the sense of Deligne.
\end{conj}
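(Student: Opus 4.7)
The plan is to establish purity by constructing an affine paving of $F_\gamma$, since any projective variety that admits a decomposition into locally closed strata isomorphic to affine spaces has cohomology concentrated in even degrees and pure of the expected weight, which forces purity in the sense of Deligne. Thus the conjecture reduces to a geometric construction of cells.

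First, since $F_\gamma$ is projective and of finite type, I would embed it as a closed subvariety of a suitable truncated affine grassmannian $\xx^{\leq N}$, with $N$ chosen in terms of the valuation of the discriminant of $\gamma$. The centralizer $T_\gamma \subset G(F)$ of $\gamma$ acts on $\xx_\gamma$, and an appropriate subtorus descends to an action on $F_\gamma$; one expects this action to have only finitely many fixed points and finitely many $1$-dimensional orbits, placing us in the setting of the general conjecture from \S3.4 of the paper. In this setting, the moment polytopes $\ec(x)$ introduced earlier should govern which cells appear, in parallel with the Mirkovic-Vilonen picture on the full affine grassmannian.

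Second, I would invoke the paper's main result on affine pavings of the truncated affine grassmannian for $\gl_3$, which exhibits a decomposition of $\xx^{\leq N}$ into explicit affine cells indexed by Mirkovic-Vilonen-type combinatorial data. Following the Berenstein-Fomin-Zelevinsky and Kamnitzer parametrizations, each cell carries coordinates in which membership is cut out by tropical/rank conditions. Intersecting such a cell with $F_\gamma$ imposes the additional constraint $\Ad(g^{-1})\gamma \in \kg(\co)$; unfolding this constraint in the adapted coordinates on the cell should produce a system of linear equations in the affine coordinates, since $\gamma$ is fixed and the Ad-action on the relevant lattice is linear in $g$.

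The main obstacle is to prove that each such intersection is itself isomorphic to an affine space, not merely an affine variety. Concretely, one must verify that the linear system cut out by $\Ad(g^{-1})\gamma \in \kg(\co)$ has constant rank along the cell, so that the intersection is a linear subspace of the ambient affine-space cell. This should rely on the regularity of $\gamma$ (to control the centralizer and avoid rank jumps across strata), on the explicit nature of the Mirkovic-Vilonen coordinates, and on the $\gl_3$-specific combinatorics isolated in the paper; in higher rank the analogous cells are substantially more intricate, which plausibly explains why the present approach must be restricted to $\gl_3$.
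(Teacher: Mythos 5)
This statement is a \emph{conjecture} in the paper, and the paper does not prove it in full generality. What the paper establishes is Theorem \ref{truncationpure}, which (taking $\bjj$ of length zero and using the identity $F_{\gamma}=\xx(P^{\bii}(\bnn))\cap \xx_{\gamma}$ coming from equation (\ref{gkmbound})) yields the purity of $F_{\gamma}$ only for $G=\gl_{3}$ and only for $\gamma$ with root valuations of the form $(n_{1},n_{2},n_{2})$ with $n_{1}\geq n_{2}$. Your overall strategy --- reduce to an affine paving, exploit a torus action with finite fixed points and a finite $1$-skeleton, and use the explicit $\gl_{3}$ coordinates on cells of the truncated affine grassmannian --- matches the paper's general program. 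But you present it as if it would yield the full conjecture, whereas even the paper's execution covers only a restricted class of $\gamma$.

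The main concrete gap in your proposal is the claim that $\Ad(g^{-1})\gamma\in\kg(\co)$ cuts out a \emph{linear} system in the cell coordinates, whose constant rank would make the intersection an affine subspace of an affine cell. This is false: $\Ad(g^{-1})\gamma$ is quadratic (and higher) in the coordinates of $g$. In the computation of $C_{B_{0}}\cap\xx_{\gamma}$ in the paper one finds the condition
$$
c(\gamma_{3}-\gamma_{1})+ab(\gamma_{1}-\gamma_{2})\in \kp^{n_{1}+n_{2}},
$$
which contains the product $ab$. Consequently, $C_{B}(\ec(x))\cap\xx_{\gamma}$ is \emph{not} automatically an affine space: Theorem \ref{criterion} gives a nontrivial numerical inequality (relating the local edge-counts $l_{\alpha}^{B}$ to the root valuations of $\gamma$ and the data $n_{\bullet}$) which decides exactly when this intersection is an affine space. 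Your proposal never produces this criterion, nor does it explain how to guarantee it is met.

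Relatedly, you miss the mechanism that makes the paving go through: the paper does not simply intersect a fixed paving of $\xx(P^{\bii}(\bnn))$ with $\xx_{\gamma}$. Instead it performs an induction along the crystal structure, removing outer shells $[\xx(E_{\bjj}\cdot P^{\bii}(\bnn))]\cap\xx_{\gamma}$ by applying the operators $E_{1},E_{2}$ alternately, and at each step \emph{chooses the paving order} among the vertices (see figures \ref{order1} and \ref{order2}) precisely so that the criterion of Theorem \ref{criterion} is satisfied for the cell being peeled off. Without this adaptively chosen order, some contracting cells would fail to intersect $\xx_{\gamma}$ in an affine space, and the argument breaks. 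A minor further point: for $\gamma\in\kt(\co)$ regular split, the centralizer is just $T(F)$; the relevant finite-orbit structure comes from the extended torus $\wtt=T\times\bbg_{m}$, and even then the $1$-dimensional $T$-orbits in $F_{\gamma}$ are not discrete --- the paper has to work with $F_{\gamma}^{T,1}$ and Chaudouard--Laumon's formalism to get a usable $1$-skeleton, which your sketch elides.
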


Our method to prove this conjecture is again to construct affine pavings of $F_{\gamma}$. In \S3.4, we state two conjectures on how  to construct affine pavings for cohomologically pure projective algebraic varieties, which admit an action of torus such that the fixed points and $1$-dimensional orbits are finite. At the beginning of \S4, we explained how to adapt this method to $F_{\gamma}$.

To carry out the program, it is important to study the polytope $\ec(x)$ for any $x\in \xx$ and the truncated affine grassmannian
$$
\xx(\ec(x)):=\{y\in \xx\mid \ec(y)\subset \ec(x),\, \nu_{G}(y)=\nu_{G}(x)\},
$$
where for $x=gK$, we set $\nu_{G}(x)=\val(\det(g))$. The requirement $\nu_{G}(y)=\nu_{G}(x)$ means that $x$ and $y$ should lie on the same connected component of $\xx$.

More precisely, for each $B\in \cp(T)$, we need to understand the contracting cell
$$
C_{B}(\ec(x)):=\{y\in \xx(\ec(x))\mid f_{B}(y)=f_{B}(x)\}.
$$

Inspired by the results of Berenstein, Fomin and Zelevinsky \cite{bfz} on Lusztig parametrization of totally positive matrices in $U_{0}$, and the results of Kamnitzer \cite{k1} on the Mirkovic-Vilonen cycles, we are led to the following conjecture: 

\begin{conj}

For any $x\in \xx$, there exists an element $w$ in the Weyl group $W$ of $G$ such that $w\cdot \xx(\ec(x))$ is a Mirkovic-Vilonen cycle. Further more, the truncated affine grassmannian $\xx(\ec(x))$ admits an affine paving, and so is cohomologically pure. 
\end{conj}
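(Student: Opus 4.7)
The plan is to separate the conjecture into (i) the identification of $w\cdot\xx(\ec(x))$ with a Mirkovic-Vilonen cycle for some $w\in W$, and (ii) the existence of an affine paving of $\xx(\ec(x))$, and to address these using Kamnitzer's characterization of MV polytopes \cite{k1} and the Berenstein-Fomin-Zelevinsky parametrization of unipotent groups \cite{bfz}, respectively. For any $x\in\xx$, the tuple $(f_B(x))_{B\in\cp(T)}$ is automatically a pseudo-Weyl datum whose convex envelope $\ec(x)$ is a pseudo-Weyl polytope, and Kamnitzer's theorem singles out MV polytopes as exactly those pseudo-Weyl polytopes whose Berenstein-Zelevinsky $2$-face data satisfy the tropical Plücker relations. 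Claim (i) thus reduces to finding $w\in W$ such that $w\cdot\ec(x)$ satisfies these relations, after which the identification of $w\cdot\xx(\ec(x))$ with the corresponding MV cycle follows because both sides are cut out by the same polytope-containment condition on the same connected component of $\xx$.

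I would verify this for $\gl_{3}$ by a direct case analysis in the lattice description: represent $x$ by an $\co$-lattice $L\subset F^{3}$, compute each $f_{B}(x)$ from the valuations of a well-chosen basis of $L$, and read the BZ coordinates of the two-dimensional faces of the resulting hexagon in closed form. In rank two the tropical Plücker relations collapse to a single tropical inequality at the central $2$-face, and the $W=S_{3}$ action permutes the variables so that at least one chamber puts the inequality in the correct direction; this produces the required $w$. For (ii), I would decompose $\xx(\ec(x))$ into the contracting cells $C_{B}(\ec(x))$, realized as attractors for a generic one-parameter subgroup of $T$ adapted to $B$, and put affine coordinates on each cell via the BFZ parametrization of $U_{B}$ by a reduced word of the longest element of $W$, transported to the affine grassmannian through $u\mapsto u\ep^{f_{B}(x)}K$ on $U_{B}(F)$. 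The condition $\ec(y)\subset\ec(x)$ should translate into a collection of tropical inequalities on the valuations of these coordinates, cutting out an affine space in each chart; summing over $B$ then assembles the paving.

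The principal obstacle is the combinatorial step in (i): there is no a priori reason for a pseudo-Weyl polytope arising as some $\ec(x)$ to be a Weyl translate of an MV polytope, and the argument depends on special features of rank two that are unlikely to persist verbatim in higher rank. A secondary difficulty is the verification that, once the polytope-containment condition is translated into tropical inequalities on the BFZ coordinates, the resulting subvariety of each affine chart really is an affine space and not merely quasi-affine; in rank two this can be checked by inspection, but it is the delicate point that I expect to drive the complexity of the proof.
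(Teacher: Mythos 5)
Your overall strategy has the right ingredients (BFZ parametrization, Kamnitzer's tropical characterization of MV polytopes), and you correctly identify the crux of part (i) as the non-trivial step. However, there are two genuine problems with the proposal.

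First, the choice of $w$ in part (i). You assert that ``the $W=S_{3}$ action permutes the variables so that at least one chamber puts the inequality in the correct direction,'' but this is exactly what needs to be proved and is not a formal consequence of the $W$-action: the rank-two tropical Pl\"ucker condition is an equality (a min-plus relation), not a mere inequality that can be reversed by swapping variables, and acting by $w$ changes both the framing and all of the Lusztig data simultaneously. The paper's proof (Proposition \ref{reduce to mv}) singles out $w$ by \emph{minimizing} $\langle w'\cdot\rho,\lambda_{w'}-\lambda_{w'w_0}\rangle$ over $w'\in W$, normalizes so $w=1$ and $x\in U_0(F)K/K$, writes $x=[\byy_{\bii}(t_\bullet)^{-1}]=[\byy_{\bii'}(t'_\bullet)^{-1}]$ for the two reduced words $\bii=(121)$, $\bii'=(212)$, and uses the BFZ transition formula $t'_2=t_1+t_3$. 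The tropical relation holds precisely when $\val(t_1+t_3)=\min\{\val(t_1),\val(t_3)\}$; in the bad case $\val(t_1)=\val(t_3)$, one shows $n_1=n_3$, hence $P^{\bii}(n_\bullet)$ is a Weyl polytope, and then a strict inequality $\langle s_2\cdot\rho,\lambda_{s_2}-\lambda_{s_2w_0}\rangle<\langle\rho,\lambda_1-\lambda_{w_0}\rangle$ contradicts the minimality of $w$. This contradiction is the key idea your sketch is missing.

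Second, the paving scheme in part (ii) is not well-formed as stated. The contracting cells $C_B(\ec(x))=\{y\in\xx(\ec(x))\mid f_B(y)=f_B(x)\}$ for $B\in\cp(T)$ are only $|W|=6$ in number and contain only the six extremal torus-fixed points $\lambda_w$; they do not cover $\xx(\ec(x))$, which has fixed points at every lattice point of $\ec(x)$. The paper in fact gives two pavings. The first (Proposition \ref{pavemv}) is entirely different from your proposal: it writes $\xx(P^{\bii}(n_\bullet))=\sch(n_1+n_2,-n_2,-n_2)\cap\ep^{\baa}\cdot\sch(n_3,n_3,-n_1-n_2)$ and intersects with the cells of a non-standard Iwahori $I_\baa$, invoking Proposition \ref{pave} from \cite{chen1}. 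The second (in \S3.5) is closer to your idea but is an \emph{inductive peeling} scheme: strip off one contracting cell $C_{B}(\ec(x))$, observe that the complement is a union of strictly smaller truncated affine grassmannians, and recurse. Your coordinate computation via BFZ parametrization within each contracting cell is a reasonable way to see that a single contracting cell is an affine space (compare Theorem \ref{cell}), but without the inductive structure it does not produce a paving of the whole variety.
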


In this paper, we verify this conjecture for the group $\gl_{3}$. We give two affine pavings, one using the non-standard paving in \cite{chen1}, the other following the general method in \S3.4.

To carry out the second approach, we determine precisely the contracting cells $C_{B}(\ec(x))$. In particular, we prove that they are all isomorphic to affine spaces. These results are then used to study the affine Springer fibers for the group $\gl_{3}$. In particular, we find a family of cohomologically pure ``\emph{truncated}'' affine Springer fibers. Let $\{E_{i}, F_{i}, H_{i}\}_{i=1}^{2}$ be the generator of $U_{q}(\mathfrak{gl}_{3})$ lifting $\{e_{i}, f_{i}, h_{i}\}_{i=1}^{2}$  of $\mathfrak{gl}_{3}$. Recall that there is a crystal structure on the set of all the MV-cycles (resp. polytopes), i.e. an action of $E_{i},\,F_{i}$ on them. Let $\bjj$ be a finite sequence of alternating $1, 2$ of length $l$. Let 
$$
E_{\bjj}:=E_{j_{1}}E_{j_{2}}\cdots E_{j_{l}}.
$$

For any regular element $\gamma\in \kt(\co)$, we can suppose without loss of generality that
$$
\val(\alpha_{12}(\gamma))=n_{1},\quad \val(\alpha_{23}(\gamma))=\val(\alpha_{13}(\gamma))=n_{2},
$$
and $n_{1}\geq n_{2}$. Let $\bii=(121)$ and $\bnn=(n_{1},n_{2}, n_{2})$, let $P^{\bii}(\bnn)$ be the Mirkovic-Vilonen polytope of Lusztig data $\bnn$ with respect to the reduced word $\bii$ of the longest element $w_{0}$.

\begin{thm}

The truncated affine Springer fibers
$$
\xx(E_{\bjj}\cdot P^{\bii}(\bnn))\cap \xx_{\gamma}
$$
admits an affine paving, so it is cohomologically pure.

\end{thm}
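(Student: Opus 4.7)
The plan is to combine the affine paving of the truncated affine grassmannian $\xx(E_{\bjj}\cdot P^{\bii}(\bnn))$ constructed in the earlier sections with the natural $T$-action on the affine Springer fiber. Recall that we have just established that $\xx(E_{\bjj}\cdot P^{\bii}(\bnn))$ decomposes as the disjoint union of contracting cells $C_{B}(E_{\bjj}\cdot P^{\bii}(\bnn))$ indexed by $B\in\cp(T)$, each being isomorphic to an affine space. Because every contracting cell is $T$-stable and $\gamma\in\kt(\co)$ commutes with $T$, the affine Springer fiber $\xx_{\gamma}$ is itself $T$-stable, so the intersection inherits the decomposition
$$
\xx(E_{\bjj}\cdot P^{\bii}(\bnn))\cap \xx_{\gamma}=\bigsqcup_{B\in \cp(T)}\bigl(C_{B}(E_{\bjj}\cdot P^{\bii}(\bnn))\cap \xx_{\gamma}\bigr).
$$
It therefore suffices to show that each piece on the right hand side is an affine space (or empty).

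Next I would parametrize each contracting cell $C_{B}(E_{\bjj}\cdot P^{\bii}(\bnn))$ by explicit matrix coordinates. Writing a generic element of $C_{B}$ as $u\cdot \ep^{f_{B}(x)}K$ with $u\in U_{B}(F)$, the truncation condition $\ec(y)\subset E_{\bjj}\cdot P^{\bii}(\bnn)$ forces the entries of $u$ to live in prescribed pieces of $F/\co$. In the $\gl_{3}$ case this yields at most three truncated series, and we obtain an isomorphism $C_{B}\cong \ba^{N}$ with $N$ determined by the edge-lengths of the polytope $E_{\bjj}\cdot P^{\bii}(\bnn)$. The Springer condition $\Ad(u^{-1})\ep^{-f_{B}(x)}\gamma\,\ep^{f_{B}(x)}\in \kg(\co)$ then translates into explicit divisibility conditions on the entries of $u^{-1}\gamma u$ by powers of $\ep$ determined by $f_{B}(x)$ and by the valuations $\val(\alpha_{ij}(\gamma))$.

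The heart of the argument is to show that, under the hypothesis $n_{1}\geq n_{2}$ and for the specific reduced word $\bii=(121)$ and Lusztig datum $\bnn=(n_{1},n_{2},n_{2})$, these divisibility conditions reduce to \emph{linear} equations in the affine coordinates on $C_{B}$, so that the intersection $C_{B}\cap \xx_{\gamma}$ is itself an affine space (possibly empty). This should follow from an explicit inspection of the off-diagonal entries of $\Ad(u^{-1})\gamma$: the $(i,j)$-entry has the form $\alpha_{ij}(\gamma)\,u_{ij}+Q_{ij}(u)$ where $Q_{ij}$ is polynomial in the coefficients of lower-depth matrix entries, and a careful ordering (working from the top-right downward for $B=B_{0}$, and by the appropriate Weyl-twist for the other $B$) expresses each divisibility condition as a linear condition on a single new coefficient, the previous coefficients being already constrained.

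The main obstacle will be to carry out the case analysis uniformly across the six chambers $B\in\cp(T)$ and across the varying polytopes $E_{\bjj}\cdot P^{\bii}(\bnn)$ produced by the crystal operators. To keep this manageable I would first give a clean combinatorial description of $E_{\bjj}\cdot P^{\bii}(\bnn)$ using the tropical Pl\"ucker/BZ-data recipe for the Kashiwara operators on MV-polytopes, identify precisely the Lusztig data with respect to each reduced expression of $w_{0}$, and only then run the linearization argument above chamber by chamber. The hypothesis $n_{1}\geq n_{2}$ is exactly what ensures that the Lusztig data seen from each chamber remains in the range where the leading coefficient $\alpha_{ij}(\gamma)$ dominates the correction term $Q_{ij}$, so the linearization succeeds.
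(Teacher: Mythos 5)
There is a genuine gap, and the opening step is already incorrect. Your claimed decomposition
$$
\xx(E_{\bjj}\cdot P^{\bii}(\bnn))\cap \xx_{\gamma}=\bigsqcup_{B\in \cp(T)}\bigl(C_{B}(E_{\bjj}\cdot P^{\bii}(\bnn))\cap \xx_{\gamma}\bigr)
$$
is not a partition of $\xx(E_{\bjj}\cdot P^{\bii}(\bnn))$. The six contracting cells $C_{B}$, $B\in\cp(T)$, only account for the points $y$ whose polytope $\ec(y)$ shares at least one vertex with the ambient polytope; any $y$ with $\ec(y)$ strictly interior is missed, and distinct $C_{B}$'s can also overlap. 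The affine paving constructed in \S 3.5 actually has one cell per $T$-fixed point of $\xx(\ec(x))$, i.e. one per lattice point in the polytope, produced by an inductive peeling: one removes a single contracting cell $C_{B}(\ec(x))$, observes that the complement is a union of smaller truncated affine grassmannians, and repeats. So even before the Springer condition enters, the decomposition you want to intersect with $\xx_{\gamma}$ is not the one you wrote down.

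The second and more serious issue is that the pieces $C_{B}(\ec(x))\cap\xx_{\gamma}$ are \emph{not} always affine spaces, and the ``linearization'' heuristic in your last two paragraphs cannot be made to work unconditionally. Theorem \ref{criterion} gives a precise numerical inequality in each chamber which is equivalent to $C_{B}\cap\xx_{\gamma}$ being an affine space; it can fail. This is exactly where the crystal operators earn their keep in the paper's argument. The proof is an induction on the length of $\bjj$: for $\bjj$ of length $2n_{2}$ the polytope $E_{\bjj}\cdot P^{\bii}(\bnn)$ is so small that the Springer equations are automatically satisfied, so $[\xx(E_{\bjj}\cdot P^{\bii}(\bnn))]\cap \xx_{\gamma}=\xx(E_{\bjj}\cdot P^{\bii}(\bnn))$ and Proposition \ref{pavemv} applies directly; then one pulls back by $F_{1}$ or $F_{2}$ one step at a time, paving only the new crescent that appears, choosing the order of vertices (as in the figures) so that at each stage the criterion of Theorem \ref{criterion} is verified. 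Your proposal has no inductive anchor and does not invoke the criterion at all; the hypothesis $n_{1}\geq n_{2}$ by itself does not make all the Springer conditions linear in all chambers, which is why the careful ordering and the crystal structure are needed. To repair the argument you would need (i) the correct inductive cell structure from \S 3.5, with one cell per lattice point, and (ii) a step-by-step verification, via Theorem \ref{criterion}, that each newly added cell meets $\xx_{\gamma}$ in an affine space.
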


\subsection*{Notation}

Let $\Phi=\Phi(G,T)$ be the root system of $G$ with respect to $T$, let $(a_{ij})$ be its Cartan matrix. Let $W$ be the Weyl group of $G$ with respect to $T$. For any subgroup $H$ of $G$ which is stable under the conjugation of $T$, we write $\Phi(H,T)$ for the roots appearing in $\lie(H)$. For $B\in \cp(T)$, let $\Phi_{B}^{+}=\Phi_{B}^{+}(G,T)$ be the set of positive roots of $G$ with respect to $B$. For $B_{0}$, we simplify it to $\Phi^{+}$. Let $\Delta=\{\alpha_{1},\cdots, \alpha_{r}\}$ be the set of simple roots with respect to $B_{0}$, let $\{\varpi_{1},\cdots,\varpi_{r}\}$ be the corresponding fundamental weights. Let $\rho=\sum_{i=1}^{r}\varpi_{i}$ be the Weyl vector.

Let $s_{1},\cdots, s_{r}$ be the simple reflections associated to the simple roots in $\Delta$. Every element $w\in W$ can be written (non-uniquely) as a reduced expression $w=s_{i_{1}}\cdots s_{i_{k}}$, we say that $w$ is of length $\ell(w)=k$ and that $\bii=(i_{1},\cdots,i_{k})$ is a reduced word for $w$. Let $w_{0}$ be the longest element in $W$, let $m$ be its length, which is also the number of positive roots of $G$. For each simple root $\alpha_{i}$, let $\psi_{i}:\SL_{2}\to G$ be the $i$-th root subgroup of $G$. Let
$$
e_{i}=\psi_{i}\left(\begin{bmatrix}0&1\\ &0
\end{bmatrix} \right),\quad f_{i}=\psi_{i}\left(\begin{bmatrix}0&\\ 1 &0
\end{bmatrix}\right), \quad
h_{i}=\psi_{i}\left(\begin{bmatrix}1&\\ &-1
\end{bmatrix}\right).
$$
Let $\{E_{i}, F_{i}, H_{i}\}_{i=1}^{r}$ be the generator of the quantum group $U_{q}(\kg)$ lifting the $\{e_{i}, f_{i}, h_{i}\}_{i=1}^{r}$. For $w\in W$, let $\bar{w}$ be the lift of $w$ to $G$, defined using the lift of $\bar{s}_{i}:=\psi_{i}\Big( \begin{bmatrix} 0& 1\\ -1&0 \end{bmatrix}\Big)$.

To each $\alpha_{i}\in \Delta$, we have a unique maximal parabolic subgroup $P_{i}$ of $G$ containing $B_{0}$ such that $\Phi(N_{P_{i}},T)\cap \Delta=\alpha_{i}$, where $N_{P_{i}}$ is the unipotent radical of $P_{i}$. This gives a bijective correspondence between the simple roots in $\Delta$ and the maximal parabolic subgroups of $G$ containing $B_{0}$. Any maximal parabolic subgroup $P$ of $G$ is conjugate to certain $P_{i}$ by an element $w\in W$, the element $w\cdot\varpi_{i}$ doesn't depend on the choice of $w$, we denote it by $\varpi_{P}$. We also say that $\varpi_{P}$ is a chamber weight of level $i$. 

%Let $\Pi$ be the set of all the chamber weights.

We use the $(G,M)$ notation of Arthur. Let $\cf(T)$ be the set of parabolic subgroups of $G$ containing $T$, let $\cl(T)$ be the set of Levi subgroups of $G$ containing $T$. For every $M\in \cl(T)$, we denote by $\cp(M)$ the set of parabolic subgroups of $G$ whose Levi factor is $M$. For $P\in \cp(M)$, we denote by $P^{-}$ the opposite of $P$ with respect to $M$. Let $X^*(M)=\Hom(M, \bbg_m)$ and $\ka_M^{*}=X^*(M)\otimes\br$. The restriction $X^{*}(M)\to X^{*}(T)$ induces an injection $\ka_{M}^{*}\hookrightarrow \ka_{T}^{*}$. Let $(\ka_{T}^{M})^{*}$ be the subspace of $\ka_{T}^{*}$ generated by $\Phi(M,T)$. We have the decomposition in direct sums
$$
\ka_{T}^{*}=(\ka_{T}^{M})^{*}\oplus \ka_{M}^{*}.
$$

The canonical pairing 
$$
X_{*}(T)\times X^{*}(T)\to \bz
$$ 
can be extended linearly to $\ka_{T}\times \ka_{T}^{*}\to \br$, with $\ka_{T}=X_{*}(T)\otimes \br$. For $M\in \cl(T)$, let $\ka_{T}^{M}\subset \ka_{T}$ be the subspace orthogonal to $\ka_{M}^{*}$, and $\ka_{M} \subset \ka_{T}$ be the subspace orthogonal to $(\ka_{T}^{M})^{*}$, then we have the decomposition
$$
\ka_{T}=\ka_{M}\oplus \ka_{T}^{M},
$$
let $\pi_{M},\,\pi^{M}$ be the projections to the two factors.

We identify $X_{*}(T)$ with $T(F)/T(\co)$ by sending $\chi$ to $\ep^{\chi}$. With this identification, the canonical surjection $T(F)\to T(F)/T(\co)$ can be viewed as
\begin{equation}\label{indexT}
T(F)\to X_{*}(T).
\end{equation}

We use $\Lambda_{G}$ to denote the quotient of $X_{*}(T)$ by the coroot lattice of $G$ (the subgroup of $X_{*}(T)$ generated by the coroots of $T$ in $G$). We have a canonical homomorphism
\begin{equation}\label{indexM}
G(F)\to \Lambda_{G},
\end{equation}
which is characterized by the following properties: it is trivial on the image of $G_{\mathrm{sc}}(F)$ in $G(F)$ ($G_{\mathrm{sc}}$ is the simply connected cover of the derived group of $G$), and its restriction to $T(F)$ coincides with the composition of (\ref{indexT}) with the projection of $X_{*}(T)$ to $\Lambda_{G}$. Since the morphism (\ref{indexM}) is trivial on $G(\co)$, it descends to a map
$$
\nu_{G}:\xx\to \Lambda_{G},
$$
whose fibers are the connected components of $\xx$.

An important remark: Our convention on the group action is different from that of Berenstein-Fomin-Zelevinsky \cite{bfz} and that of Kamnitzer \cite{k1}. For them, the group acts from the right, while for us it acts from the left. It suffices to take the inverse to pass between the two. Also, there is a difference on the chamber weights. For us, it points outward, while it points inward for them.

\section{Mirkovic-Vilonen cycles and polytopes}

\subsection{Lusztig parametrization of $U_{0}$}

Let $\kg=\kt\oplus\bigoplus_{\alpha\in \Phi}\kg_{\alpha}$ be the decomposition of $\kg$ into root spaces. For each root $\alpha\in \Phi$, let $U_{\alpha}$ be the unipotent subgroup of $G$ whose Lie algebra is $\kg_{\alpha}$. We have an isomorphism of group schemes $\bxx_{i}:\bbg_{a}\to U_{\alpha_{i}}$ sending $t\in \bbg_{a}$ to $1+te_{i}=\exp(te_{i})\in U_{\alpha_{i}}$, which gives a natural coordinate on $U_{\alpha_{i}}$. 

Let $\bii=(i_{1},\cdots,i_{m})$ be a reduced word of the longest element $w_{0}\in W$. Taking products, we get a morphism of algebraic varieties:
$$
U_{\alpha_{i_{m}}}\times\cdots\times U_{\alpha_{i_{1}}}\to U_{0},
$$
which is a birational isomorphism. In \cite{bfz}, Berenstein, Fomin and Zelevinsky determine an open dense sub variety of $U_{0}$ over which the morphism is biregular, and give an explicit inverse morphism to it.

Firstly, they construct a birational transformation on $U_{0}$. For a generic matrix $g$, let $[g]_{+}$ be the last factor $u$ in the Gaussian $LTU$-decomposition $g=vtu$, with $v\in U_{0}^{-},\,t\in T,\, u\in U_{0}$. The matrix entries of $[g]_{+}$ are rational functions of $g$, and explicit formulas can be written out in terms of minors of $g$. The map $\eta_{w_{0}}: U_{0}\to U_{0}$ sending $y $ to $x=[\bar{w}_{0}y^{t}]_{+}$ is a birational automorphism, where $y^{t}$ means the transposition of $y$. The inverse birational automorphism is given by $y=\eta_{w_{0}}^{-1}(x)=\bar{w}_{0}^{-1}[x\bar{w}_{0}^{-1}]^{t}_{+}\bar{w}_{0}$.

Define morphisms $\bxx_{\bii},\,\byy_{\bii}$ from $(\ba^{1}\backslash\{0\})^{m}$ to $U_{0}$ to be
\begin{eqnarray*}
\bxx_{\bii}(t_{1},\cdots,t_{m})&=&\bxx_{i_{m}}(t_{m})\cdot\cdots \cdot \bxx_{i_{1}}(t_{1}),\\
\byy_{\bii}(t_{1},\cdots,t_{m})&=&\eta_{w_{0}}^{-1}(\bxx_{\bii}(t_{1},\cdots,t_{m})).
\end{eqnarray*}

The reduced word $\bii$ of $w_{0}$ determines a sequence of Weyl group elements $w^{\bii}_{j}=s_{i_{1}}\cdots s_{i_{j}}$ and distinct negative coroots $\beta^{\bii}_{j}=-w^{\bii}_{j-1}\cdot \alpha_{i_{j}}^{\vee}$, and the chamber weights $\varpi^{\bii}_{j}=w^{\bii}_{j}\cdot \varpi_{i_{j}},\, j=1,\cdots, m$. Let $\Gamma^{\bii}=\{w^{\bii}_{j}\cdot \varpi_{l};\,j=1,\cdots, m;\, l=1,\cdots, r\}$, one can prove that $\Gamma^{\bii}$ consists of the fundamental weights and the $\varpi_{j}^{\bii}$, they are called $\bii$-chamber weights.

Finally, for a chamber weight $\varpi=w\cdot \varpi_{i}$ of level $i$, for any matrix $g\in G$, let $\Delta_{\varpi}(g)$ be the minor of $g$ of the first $i$ rows and the column set $w\cdot \{1,\cdots, i\}$.

\begin{prop}[Berenstein-Fomin-Zelevinsky]\label{para1}

The morphism $\byy_{\bii}: (\ba^{1}\backslash\{0\})^{m} \to U_{0}$ is a biregular isomorphism onto $\{y\in U_{0}\mid \Delta_{\varpi}(y^{-1})\neq 0,\,\forall \,\varpi\in \Gamma^{\bii}\}.$ 

\end{prop}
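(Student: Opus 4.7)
The plan is to follow the Berenstein–Fomin–Zelevinsky strategy of writing an explicit inverse via generalized minors, after first understanding the role of the twist automorphism $\eta_{w_0}$. The moves are:

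\smallskip
\emph{Step 1: Twist interpretation.} The birational map $\eta_{w_0}:U_{0}\to U_{0}$, $y\mapsto [\bar{w}_{0}y^{t}]_{+}$, is the standard \emph{BFZ twist}. Under it, a minor $\Delta_{\varpi}$ evaluated on $\eta_{w_{0}}(y)$ is translated, up to a monomial in lower-level minors, into a minor of $y^{-1}$; concretely, one shows that for every $\bii$-chamber weight $\varpi$ there is an identity of the form $\Delta_{\varpi}(y^{-1})=M_{\varpi}(x)$, where $x=\bxx_{\bii}(t_1,\dots,t_m)$ and $M_{\varpi}$ is a monomial in the minors $\Delta_{\varpi_{j}^{\bii}}(x)$ and the fundamental minors of $x$. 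This reduces the proposition to the analogous statement for $\bxx_{\bii}$: it is a biregular isomorphism from $(\ba^{1}\setminus\{0\})^{m}$ onto $\{x\in\bar{B}_{0}^{-}\bar{w}_{0}B_{0}\mid \Delta_{\varpi_{j}^{\bii}}(x)\neq 0,\ j=1,\dots,m\}$, a piece of an open double Bruhat cell.

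\smallskip
\emph{Step 2: Factorization of chamber minors.} The central calculation is that along the reduced word $\bii$, the chamber minor $\Delta_{\varpi_{j}^{\bii}}(x)$ evaluated on $x=\bxx_{i_{m}}(t_{m})\cdots\bxx_{i_{1}}(t_{1})$ factors as a monomial in $t_{1},\dots,t_{j}$; the exponents are read off the combinatorial data $(w^{\bii}_{k},\beta^{\bii}_{k})$. I would prove this by induction on $j$: at each step one uses the relations $\bxx_{i}(t)=\exp(te_{i})$ and the fact that $e_{i_{k+1}}$ acts as a creation operator on the chamber weight $\varpi_{k+1}^{\bii}$ while annihilating the earlier ones, so only the factors $\bxx_{i_{1}}(t_{1}),\dots,\bxx_{i_{j}}(t_{j})$ contribute. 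Taking logarithms of these monomial identities gives an invertible integer matrix relating $(\log \Delta_{\varpi_{j}^{\bii}}(x))_{j}$ to $(\log t_{j})_{j}$, which yields an explicit Laurent monomial formula $t_{j}=\prod_{k} \Delta_{\varpi_{k}^{\bii}}(x)^{c_{jk}}$.

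\smallskip
\emph{Step 3: Injectivity, surjectivity, regularity.} Injectivity of $\bxx_{\bii}$ on $(\ba^{1}\setminus\{0\})^{m}$ is immediate from the monomial formulas in Step 2. Surjectivity onto the open set: given $x$ with all chamber minors $\Delta_{\varpi}(x)\neq 0$ for $\varpi\in \Gamma^{\bii}$, define $t_{j}$ by the Laurent monomial formula; these $t_{j}$ are nonzero, and one verifies $\bxx_{\bii}(t_{1},\dots,t_{m})=x$ by observing both sides have the same chamber minors (and that chamber minors separate points on this Bruhat cell, since the cell is parametrized by its tropicalization of chamber coordinates). The inverse is given by Laurent monomials in the minors $\Delta_{\varpi}$, which are regular functions on the open subscheme cut out by $\Delta_{\varpi}\neq 0$, so the isomorphism is biregular. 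Finally, transporting back by $\eta_{w_{0}}^{-1}$ and using the identities of Step 1 converts the condition ``$\Delta_{\varpi_{j}^{\bii}}(x)\neq 0$'' into ``$\Delta_{\varpi}(y^{-1})\neq 0$ for all $\varpi\in\Gamma^{\bii}$'' (the fundamental weights enter because they are exactly those $\bii$-chamber weights that are fixed points of Step 1's monomial change of variables).

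\smallskip
The main obstacle is Step 2, the combinatorial verification that the chamber minors of $\bxx_{\bii}(t)$ are monomials with the correct exponent matrix; once that is in hand the rest is bookkeeping. The remainder of the paper does not appear to reprove this proposition, so I expect it will simply be invoked with a reference to \cite{bfz}.
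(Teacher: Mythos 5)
The proposition is stated in the paper without proof; it is quoted verbatim from Berenstein--Fomin--Zelevinsky \cite{bfz} (up to the left/right convention and the resulting $y\mapsto y^{-1}$, as the paper explains in the Notation section), and your closing remark correctly anticipated this. So there is no internal proof to compare against.

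On the mathematics of your reconstruction: the overall architecture (reduce to explicit Laurent-monomial formulas for the inverse, the BFZ ``Chamber Ansatz'') is right, but Step~2 as written is false, and it is the crucial step. The chamber minors of the \emph{untwisted} element $x=\bxx_{\bii}(t_{\bullet})$ are \emph{not} monomials in the $t_{j}$. Concretely, for $\gl_{3}$ and $\bii=(121)$ one computes
$$
\bxx_{1}(t_{3})\bxx_{2}(t_{2})\bxx_{1}(t_{1})=
\begin{bmatrix}1& t_{1}+t_{3}& t_{2}t_{3}\\ 0&1&t_{2}\\ 0&0&1\end{bmatrix},
$$
so $\Delta_{\varpi_{1}^{\bii}}(x)=\Delta_{s_{1}\varpi_{1}}(x)=x_{12}=t_{1}+t_{3}$, an irreducible binomial. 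The entire purpose of introducing the twist $\eta_{w_{0}}$ is to trade $x$ for $y=\eta_{w_{0}}^{-1}(x)$, for which the chamber minors $\Delta_{\varpi}(y^{-1})$ \emph{do} become Laurent monomials in the $t_{j}$ (that is the content of the Chamber Ansatz). So the monomial identity you want to prove by induction is for $\Delta_{\varpi_{j}^{\bii}}(y^{-1})$, not for $\Delta_{\varpi_{j}^{\bii}}(x)$; your Step~1 should be interpreted as transporting the problem \emph{to} the twisted side rather than reducing the $\byy_{\bii}$-statement to an $\bxx_{\bii}$-statement. With that correction, the invertible integer exponent matrix, the resulting Laurent-monomial inverse, and the biregularity conclusion of Step~3 all go through as you outlined, and match the BFZ proof.
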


%Our statement here is different from theorem 1.4 in \cite{bfz} since we don't use the same convention. To make the transition, one needs to use the Jacobi equality between the minors of two matrices that are inverse to each other.

Further more, Berenstein, Fomin and Zelevinsky also determine the transition law $\byy_{\bii}\byy_{\bii'}^{-1}$ for any two reduced words $\bii,\,\bii'$ of $w_{0}$. Recall that two reduced words $\bii,\,\bii'$ are said to be related by a $d$-braid move involving $i,j$, starting at the position $k$, if
\begin{eqnarray*}
\bii&=& (\cdots,i_{k}, i,j, i,\cdots, i_{k+d+1},\cdots),\\
\bii'&=& (\cdots,i_{k}, j,i, j,\cdots, i_{k+d+1},\cdots).
\end{eqnarray*}
where $d$ is the order of $s_{i}s_{j}$.

For the linear groups of type $A$, we know that any two reduced words of $w_{0}$ can be related by a sequence of $2$ or $3$-braid moves.

\begin{prop}[Berenstein-Fomin-Zelevinsky]\label{para2}
Let $\bii,\,\bii'$ be two reduced words related by a $d$-braid move as above, $d=2,3$, starting at the position $k$. Suppose that $\byy_{\bii}(t_{\bullet})=\byy_{\bii'}(t'_{\bullet})$. Then

\begin{enumerate}
\item

We have $t_{j}=t_{j}'$ for $j\notin \{k+1,\cdots,k+d\}$,

\item

If $a_{i,j}=0$, then $d=2$ and $t_{k+1}'=t_{k+2},\, t_{k+2}'=t_{k+1}$.

\item

If $a_{i,j}=-1$, then $d=3$ and
$$
t_{k+1}'=\frac{t_{k+2}t_{k+3}}{t_{k+1}+t_{k+3}},\quad t_{k+2}'=t_{k+1}+t_{k+3},\quad t_{k+3}'=\frac{t_{k+1}t_{k+2}}{t_{k+1}+t_{k+3}}.
$$

\end{enumerate}

\end{prop}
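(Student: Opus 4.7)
The plan is to reduce the identity for $\byy_{\bii}$ to the corresponding identity for $\bxx_{\bii}$, localize the braid move inside the rank-two subgroup $\langle U_{\alpha_{i}}, U_{\alpha_{j}}\rangle$, and finally verify the explicit formulas by direct matrix computation in $\SL_{3}$.

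Since $\byy_{\bii}=\eta_{w_{0}}^{-1}\circ\bxx_{\bii}$ and $\eta_{w_{0}}^{-1}$ is an injective birational automorphism of $U_{0}$, the hypothesis $\byy_{\bii}(t_{\bullet})=\byy_{\bii'}(t'_{\bullet})$ is equivalent, on the common domain of biregularity, to $\bxx_{\bii}(t_{\bullet})=\bxx_{\bii'}(t'_{\bullet})$. I then decompose both sides as $A\cdot M\cdot B$, where $B=\bxx_{i_{k}}(t_{k})\cdots\bxx_{i_{1}}(t_{1})$, $M$ is the product of the $d$ middle factors (at positions $k+1,\ldots,k+d$), and $A$ is the product of the leftmost $m-k-d$ factors; $\bii$ and $\bii'$ produce factors $A,A'$ and $B,B'$ built from the same simple-root indices. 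The extension of Proposition~\ref{para1} to any reduced word of any Weyl element --- equivalently, the Chamber Ansatz of BFZ expressing each $t_{j}$ as a rational function of the minors $\Delta_{\varpi}(u)$ indexed by chamber weights attached to $\bii$ --- then yields $t_{j}=t'_{j}$ for all $j\notin\{k+1,\ldots,k+d\}$. The key input is that the Weyl elements $w^{\bii}_{j}$ and $w^{\bii'}_{j}$ agree for $j\leq k$ and for $j\geq k+d$ by the braid relation $s_{i}s_{j}s_{i}=s_{j}s_{i}s_{j}$ (resp.\ $s_{i}s_{j}=s_{j}s_{i}$), so that the chamber weights lying outside the braid range coincide. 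This proves (1) and reduces the problem to the identity $M_{\bii}=M_{\bii'}$ inside the rank-two subgroup.

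Claim (2) (the case $a_{ij}=0$, $d=2$) is then immediate: $\alpha_{i}+\alpha_{j}$ is not a root, so $[e_{i},e_{j}]=0$ and $\bxx_{i}(b)\bxx_{j}(a)=\bxx_{j}(a)\bxx_{i}(b)$. For claim (3) (the case $a_{ij}=-1$, $d=3$), the rank-two subgroup is of type $A_{2}$. Embedding it in $\SL_{3}$ so that $\bxx_{i}(t)$ and $\bxx_{j}(t)$ add $t$ to the $(1,2)$ and $(2,3)$ entries respectively, a direct multiplication of three upper-unitriangular matrices yields
$$
\bxx_{i}(t_{k+3})\bxx_{j}(t_{k+2})\bxx_{i}(t_{k+1}) = \begin{pmatrix}1 & t_{k+1}+t_{k+3} & t_{k+2}t_{k+3}\\ 0 & 1 & t_{k+2}\\ 0 & 0 & 1\end{pmatrix},
$$
and the analogous computation for $\bxx_{j}(t'_{k+3})\bxx_{i}(t'_{k+2})\bxx_{j}(t'_{k+1})$ produces a matrix with entries $t'_{k+2}$, $t'_{k+1}t'_{k+2}$, and $t'_{k+1}+t'_{k+3}$ in the $(1,2)$, $(1,3)$, and $(2,3)$ positions respectively. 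Equating these gives $t'_{k+2}=t_{k+1}+t_{k+3}$, $t'_{k+1}t'_{k+2}=t_{k+2}t_{k+3}$, and $t'_{k+1}+t'_{k+3}=t_{k+2}$; solving produces exactly the stated rational formulas.

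The conceptually delicate step is the localization: one must invoke a uniqueness/parametrization statement for the $\bxx$-products attached to arbitrary reduced words (not merely those of $w_{0}$), so that the factors $A$ and $B$ are intrinsically determined by the element $\bxx_{\bii}(t_{\bullet})$. This is a standard but nontrivial extension of Proposition~\ref{para1}, established either by the full Chamber Ansatz machinery of BFZ or by an inductive argument on the length of $w$; once it is in place, the rank-two computation is elementary algebra.
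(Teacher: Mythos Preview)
Your argument is correct. The paper does not give its own proof of this proposition; it is quoted as a result of Berenstein--Fomin--Zelevinsky \cite{bfz}, so there is no ``paper's proof'' to compare against. Your reduction from $\byy_{\bii}$ to $\bxx_{\bii}$ via the birational automorphism $\eta_{w_{0}}^{-1}$, followed by the rank-two localization and the explicit $\SL_{3}$ matrix computation, is precisely the standard route and your matrix identities check out.

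One small simplification you might note: the ``delicate step'' of invoking the Chamber Ansatz for arbitrary reduced words can be bypassed. Once you have verified the local rank-two identity (parts (2) and (3)), define $t''_{\bullet}$ by keeping $t_{j}$ outside the braid window and applying the local formula inside it; then $\bxx_{\bii'}(t''_{\bullet})=\bxx_{\bii}(t_{\bullet})=\bxx_{\bii'}(t'_{\bullet})$, and the biregularity of $\bxx_{\bii'}$ on $(\ba^{1}\backslash\{0\})^{m}$ (the $\bxx$-analogue of Proposition~\ref{para1}) forces $t'_{\bullet}=t''_{\bullet}$, giving part (1) for free. This avoids appealing to the full Chamber Ansatz machinery.
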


%Let $U_{0}^{\gen}$ be the dense open sub variety of $U_{0}$ defined by non vanishing of all the minors. Let $y\in U_{0}^{\gen}$, let $\bii,\,\bii'$ be two reduced words of $w_{0}$. Suppose that 
%$$
%y=\byy_{\bii}(t_{1},\cdots, t_{m})=\byy_{\bii'}(t_{1}',\cdots,t_{m}'),
%$$

\subsection{Mirkovic-Vilonen cycles}

For $M\in \cl(T)$, the natural inclusion of $M(F)$ in $G(F)$ induces a closed immersion of $\xx^{M}:=M(F)/M(\co)$ in $\xx^{G}$. For $P=MN\in \cf(T)$, we have the retraction
$$
f_{P}:\xx\to \xx^{M}
$$ 
which sends $gK=nmK$ to $mM(\co)$, where $g=nmk,\,n\in N(F),\, m\in M(F),\, k\in K$ is the Iwasawa decomposition. 

For $P\in \cf(T)$, we have the function $H_{P}:\xx\to \ka_{M}^{G}=\ka_{M}/\ka_{G}$ which is the composition 
$$
H_{P}:\xx\xrightarrow{f_{P}}\xx^{M}\xrightarrow{\nu_{M}}\Lambda_{M}\to \ka_{M}^{G}.
$$

\begin{prop}[Arthur]

Let $B',B''\in \cp(T)$ be two adjacent Borel subgroups, let $\alpha_{B',B''}^{\vee}$ be the coroot which is positive with respect to $B'$ and negative with respect to $B''$. Then for any $x\in \xx$, we have
$$
H_{B'}(x)-H_{B''}(x)=n(x,B',B'')\cdot \alpha_{B',B''}^{\vee},
$$
with $n(x, B', B'')\in \bz_{\geq 0}$.

\end{prop}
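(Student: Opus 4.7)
My plan is to reduce the proposition to a computation in semisimple rank one. Since $B'$ and $B''$ are adjacent and differ only across the wall $\alpha := \alpha_{B',B''}$, there is a unique parabolic $P = MN \in \cf(T)$ with $M$ of semisimple rank one such that $B', B'' \subset P$: namely, $\Phi(P,T) = \Phi_{B'}^{+} \cup \{-\alpha\}$ and the roots of $M$ are $\{\pm \alpha\}$. Then $B' \cap M$ and $B'' \cap M$ are the two opposite Borel subgroups of $M$ containing $T$, and one has the semidirect decompositions $U_{B'} = U_{B' \cap M} \ltimes N$ and $U_{B''} = U_{B'' \cap M} \ltimes N$.

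First, I would iterate the Iwasawa decomposition. For $x = gK$, write $g = n m k_0$ with $n \in N(F)$, $m \in M(F)$, $k_0 \in K$. Working inside $M(F)$, further decompose $m = u' t' \kappa' = u'' t'' \kappa''$ with respect to $B' \cap M$ and $B'' \cap M$, where $u' \in U_{B' \cap M}(F)$, $u'' \in U_{B'' \cap M}(F)$, $t', t'' \in T(F)$ and $\kappa', \kappa'' \in M(\co) \subset K$. Collecting factors yields $g = (nu') t' (\kappa' k_0) = (nu'') t'' (\kappa'' k_0)$, which are Iwasawa factorizations of $g$ through $B'$ and $B''$. Hence $f_{B'}(x) = \val(t')$ and $f_{B''}(x) = \val(t'')$, and the difference depends only on the image of $g$ in $M(F)/M(\co)$; so it suffices to prove the proposition inside the rank-one group $M$.

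Inside $M$, the derived part is isomorphic to $\SL_2$ and the central torus does not contribute since $\alpha^{\vee}$ is orthogonal to it. Parameterising the $\SL_2$-part of $m$ by a matrix with entries $a, b, c, d$ satisfying $\val(ad - bc) = 0$, a routine Iwasawa computation (normalising the last row to a primitive row of $\co^2$ for $B' \cap M$, and the first row for $B'' \cap M$) gives
$$
f_{B' \cap M}^{M}(m) = -\min(\val(c), \val(d))\,\alpha^{\vee}, \qquad f_{B'' \cap M}^{M}(m) = \min(\val(a), \val(b))\,\alpha^{\vee}.
$$
Thus the difference equals $n(x,B',B'')\,\alpha^{\vee}$ with $n(x,B',B'') = -\min(\val(c),\val(d)) - \min(\val(a),\val(b))$. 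Positivity follows because $\val(ad - bc) = 0$ forces $\min(\val(a)+\val(d),\,\val(b)+\val(c)) \leq 0$, and
$$
\min(\val(a),\val(b)) + \min(\val(c),\val(d)) = \min\bigl(\val(a)+\val(c),\,\val(a)+\val(d),\,\val(b)+\val(c),\,\val(b)+\val(d)\bigr)
$$
is bounded above by this quantity; hence $n(x,B',B'') \geq 0$.

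The main obstacle I expect is the bookkeeping in the first reduction: one must verify that the two-step Iwasawa factorisation (through $P$ and then through the Borel of $M$) is genuinely an Iwasawa factorisation through $B'$ (resp.\ $B''$) with the same torus component. This relies only on the semidirect product structure $U_{B'} = U_{B' \cap M} \ltimes N$ and the inclusion $M(\co) \subset K$, but should be written out with care so that the uniqueness of the torus part in the Iwasawa decomposition through $B'$ is not confused with that of the decomposition through $P$. Once this is in place, the $\SL_2$-level computation above is elementary.
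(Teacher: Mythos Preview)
Your argument is correct. The paper does not actually supply a proof of this proposition; it merely attributes the result to Arthur \cite{a} and states it for later use. Your reduction to the minimal parabolic $P=MN$ containing both $B'$ and $B''$, followed by the explicit $\SL_2$ computation, is the standard way to prove this fact and is essentially how Arthur's original argument proceeds in the real case. A couple of small remarks: first, you silently compute $f_{B'}(x)-f_{B''}(x)$ rather than $H_{B'}(x)-H_{B''}(x)$, but since for Borels $H_B$ is just the image of $f_B$ under $X_*(T)\to \ka_T^G$ and $\alpha^{\vee}$ already lies in $\ka_T^G$, this is harmless. Second, the claim that the derived group of $M$ is $\SL_2$ (rather than $\mathrm{PGL}_2$) is justified here because $G=\gl_{r+1}$, so every Levi is a product of general linear groups; in any case the difference $f_{B'}-f_{B''}$ is insensitive to the centre of $M$, so one may always compute in $\SL_2$ or $\gl_2$.
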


For any point $x\in \xx$, we denote by $\ec(x)$ the convex envelope in $\ka_{T}^{G}$ of the $H_{B}(x),\,B\in \cp(T)$. We are interested in the polytope $\ec(x)$ and the \emph{truncated affine grassmannian} $\xx(\ec(x))$, which is defined by
$$
\xx(\ec(x))=\{y\in \xx\mid \ec(y)\subset \ec(x), \,\nu_{G}(y)=\nu_{G}(x)\}.
$$
It turns out that they are intimately related to the Mirkovic-Vilonen cycles. More generally,

\begin{defn}
A family $(\lambda_{B})_{B\in \cp(T)}$ of elements in $\ka_{T}^{G}$ is called $(G,T)$-\emph{orthogonal} if it satisfies
$$
\lambda_{B'}-\lambda_{B''}= n_{B',B''}\cdot \alpha_{B',B''}^{\vee},\quad \text{ for some }n_{B',B''} \in \br,
$$
for any two adjacent Borel subgroups $B',\, B''\in \cp(T)$. It is called \emph{positive} $(G,T)$-orthogonal family if furthermore all the $n_{B',B''}$ are greater or equal to $0$. In this case, we write $\ec((\lambda_{B})_{B\in \cp(T)})$ for the convex envelope of the $\lambda_{B}$'s. 
\end{defn}

Given a positive $(G, T)$-orthogonal family $(\lambda_{B})_{B\in \cp(T)}$, let $\ec((\lambda_{B})_{B\in \cp(T)})$ be the convex envelope in $\ka_{T}^{G}$ of the $\lambda_{B},\,B\in \cp(T)$ and
$$
\xx((\lambda_{B})_{B\in \cp(T)})=\{y\in \xx\mid \ec(y)\subset \ec((\lambda_{B})_{B\in \cp(T)}), \,\nu_{G}(y)=\nu_{G}(\lambda_{B})\}.
$$

\begin{example}

A polytope $P$ is called \emph{Weyl polytope} if it is of the form $\ec((w\cdot \lambda)_{w\in W})$ for some $\lambda\in X_{*}^{+}(T)$. In this case, the truncated affine grassmannian
$
\xx(P)$ is nothing but the affine Schubert variety $\sch(\lambda)$.

\end{example}

\begin{defn}
Given $\mu_{1},\mu_{2}\in X_{*}(T),\,\mu_{2}-\mu_{1}\in X_{*}^{+}(T)$, the closure of the irreducible components of 
$$
U_{0}(F)\ep^{\mu_{2}}K/K\cap U_{0}^{-}(F)\ep^{\mu_{1}}K/K
$$
are called \emph{Mirkovic-Vilonen cycles} of coweight $(\mu_{1},\mu_{2})$.
\end{defn}

It is known that they have the same dimension $\rho(\mu_{2}-\mu_{1})$. Here and after, we will call them simply MV-cycles. Let $x$ be a generic point on the MV-cycle, the convex polytope $\ec(x)$ doesn't depend on the choice of $x$, and we will call it the \emph{MV-polytope} associated to the MV-cycle. It is known that they determine each other. It is easy to see that MV-cycles (resp. MV-polytopes) of coweight $(\mu_{1},\mu_{2})$ are isomorphic to that of coweight $(\lambda+\mu_{1},\lambda+\mu_{2}),\,\forall \lambda\in X_{*}(T)$. So we will not distinguish MV-cycles (polytopes) that are translation of each other by elements in $X_{*}(T)$. We will assume from now on that all the MV-cycles are of coweight $(-\mu, 0)$ for some $\mu\in X_{*}^{+}(T)$ unless stated otherwise.

In the work \cite{k1}, Kamnitzer determined all the possible MV-polytopes, and gives a description of  generic points on the MV-cycle, using the work of Berenstein, Fomin and Zelevinsky recalled in the previous section.

Let $(\lambda_{B})_{B\in \cp(T)}$ be a positive $(G,T)$-orthogonal family, we also write it as $(\lambda_{w})_{w\in W}$, with $\lambda_{w}=\lambda_{w\cdot B_{0}}$. Let $\bii$ be a reduced word for $w_{0}$, then $\lambda_{w^{\bii}_{1}},\cdots, \lambda_{w^{\bii}_{m}}$ determine a path along the edges of the polytope $P=\ec((\lambda_{B})_{B\in \cp(T)})$. Let $n_{1},\cdots, n_{m}$ be the lengths of the edges of this path. We call the $m$-tuple $(n_{1},\cdots,n_{m})$  the $\bii$-Lusztig datum of $P$. It is clear that the set of all the $\bii$-Lusztig datums, $\bii$ running through all the reduced words of $w_{0}$, determines uniquely the polytope $P$.

Let $\bii$ be a fixed reduced word of $w_{0}$, Kamnitzer \cite{k1} proves that the $\bii$-Lusztig datum determines uniquely the MV-polytope.

\begin{prop}[Kamnitzer]\label{lusztig}

Let $\bii',\,\bii''$ be two reduced words of $w_{0}$ related by a $d$-braid move, $d=2,3$, starting at the position $k$. Let $P$ be a MV-polytope, let $n'_{\bullet},\,n''_{\bullet}$ be the $\bii',\,\bii''$-Lusztig datum of $P$. Then

\begin{enumerate}
\item

We have $n'_{j}=n''_{j}$ for $j\notin \{k+1,\cdots,k+d\}$,

\item

If $\bii''$ is obtained from $\bii'$ by a $2$-move, then $n''_{k+1}=n'_{k+2},\, n''_{k+2}=n'_{k+1}$.

\item

If $\bii''$ is obtained from $\bii'$ by a $3$-move, then 
$$
n''_{k+1}=n'_{k+2}+n'_{k+3}-a,\quad n_{k+2}''=a,\quad n_{k+3}''=n'_{k+1}+n'_{k+2}-a,
$$
where $a=\min\{n'_{k+1}, n'_{k+3}\}$.
\end{enumerate}

\end{prop}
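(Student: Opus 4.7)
The plan is to lift the Berenstein-Fomin-Zelevinsky parametrization (Proposition \ref{para2}) from the finite-dimensional group $U_{0}$ to the loop group $U_{0}^{-}(F)$, use Kamnitzer's interpretation to identify the $\bii$-Lusztig datum with the valuations of the resulting coordinates (up to sign), and then tropicalize the coordinate-change formulas.

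First, I would invoke Kamnitzer's observation that on a Zariski open dense subset of any MV-cycle, each generic representative $y\in U_{0}^{-}(F)$ admits a unique decomposition via an $F$-valued analog of $\byy_{\bii}$; call the resulting coordinates $(t_{1},\ldots,t_{m})\in (F^{\times})^{m}$. The central point is that $\val(t_{j})$ recovers the length $n_{j}$ of the edge of the MV-polytope $P$ in the direction $\beta^{\bii}_{j}$, up to a uniform sign convention. In other words, the $\bii$-Lusztig datum is precisely the tropicalization of the parameter vector, and the two reduced words $\bii',\bii''$ yield coordinates $t_{\bullet},t'_{\bullet}$ for the same generic element of $U_{0}^{-}(F)$.

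Given this identification, the proposition reduces to tropicalizing Proposition \ref{para2}. For $j\notin\{k+1,\ldots,k+d\}$ one has $t_{j}=t'_{j}$, so $n'_{j}=n''_{j}$. For a $2$-move, the coordinates are transposed, which immediately yields $n''_{k+1}=n'_{k+2}$ and $n''_{k+2}=n'_{k+1}$. For a $3$-move, applying $\val$ to the three rational identities
$$
t'_{k+1}=\frac{t_{k+2}t_{k+3}}{t_{k+1}+t_{k+3}},\qquad t'_{k+2}=t_{k+1}+t_{k+3},\qquad t'_{k+3}=\frac{t_{k+1}t_{k+2}}{t_{k+1}+t_{k+3}}
$$
and using the generic identity $\val(t_{k+1}+t_{k+3})=\min\{\val(t_{k+1}),\val(t_{k+3})\}$ produces precisely the piecewise-linear rules stated, with $a=\min\{n'_{k+1},n'_{k+3}\}$.

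The main obstacle is thus the generic-valuation identity $\val(t_{k+1}+t_{k+3})=\min\{\val(t_{k+1}),\val(t_{k+3})\}$. When the two valuations are unequal this is automatic; when they coincide, failure can occur only if the leading Laurent coefficients of $t_{k+1}$ and $t_{k+3}$ cancel, which is a proper closed condition on the parameters. I would verify that this cancellation locus is a proper closed subvariety of the MV-cycle by producing a single explicit point where the leading coefficients do not cancel and invoking irreducibility of the MV-cycle (which follows from the BFZ parametrization being a birational isomorphism onto the relevant stratum). Once this genericity is in hand, the tropicalization of Proposition \ref{para2} is unambiguous and delivers the stated formulas, completing the proof.
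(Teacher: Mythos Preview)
The paper does not give its own proof of this proposition: it is stated as a result of Kamnitzer and cited from \cite{k1}, so there is nothing in the paper to compare your argument against directly.

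That said, your sketch is essentially Kamnitzer's original argument: parametrize a generic point of the MV-cycle via Proposition~\ref{paramv}, identify the Lusztig datum with the valuations of the coordinates, and tropicalize the BFZ transition formulas of Proposition~\ref{para2}. The genericity step you flag---that $\val(t_{k+1}+t_{k+3})=\min\{\val(t_{k+1}),\val(t_{k+3})\}$ holds off a proper closed subset---is exactly the point, and your treatment of it is correct. Two small corrections: the parametrization is through $U_{0}(F)$, not $U_{0}^{-}(F)$ (the MV-cycle sits in $U_{0}(F)K/K$ in the paper's conventions), and no sign adjustment is needed: Proposition~\ref{paramv} gives $n_{j}=\val(t_{j})$ on the nose, as the paper uses later in the proof of Proposition~\ref{reduce to mv}.
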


Given an $\bii$-Lusztig datum $n_{\bullet}$, let $P^{\bii}(n_{\bullet})=\ec((\lambda_{w})_{w\in W})$ be the MV-polytope with $\bii$-Lusztig datum $n_{\bullet}$, let $S^{\bii}(n_{\bullet})$ be the associated MV-cycle. Let $\mu=\sum_{j=1}^{m}n_{j}\beta_{j}^{\bii}$, then $S^{\bii}(n_{\bullet})$ is of coweight $(\mu,0)$.

Let
$$
A(n_{\bullet})=\{(t_{1},\cdots,t_{m})\in F^{m}\mid \val(t_{i})=n_{i}\text{ for all } i\}.
$$

\begin{prop}[Kamnitzer]\label{paramv}
Let $t_{\bullet}\in A(n_{\bullet})$, then 

\begin{enumerate}
\item
The point $[\byy_{\bii}(t_{\bullet})^{-1}]\in U_{0}(F)K/K\subset\xx$ lies in $ S^{\bii}(n_{\bullet})$, and
$$
f_{w^{\bii}_{j}\cdot B_{0}}([\byy_{\bii}(t_{\bullet})^{-1}])=\lambda_{w^{\bii}_{j}},\quad j=1,\cdots, m.
$$  

\item
The morphism $A(n_{\bullet})\to S^{\bii}(n_{\bullet})$ sending $t_{\bullet}$ to $[\byy_{\bii}(t_{\bullet})^{-1}]$ has dense open image.
\end{enumerate}
\end{prop}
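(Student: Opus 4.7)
The plan is to descend the affine paving of the truncated grassmannian $\xx(E_\bjj \cdot P^\bii(\bnn))$, constructed earlier in the paper by the method of \S3.4 adapted to $\gl_3$, to the truncated affine Springer fiber, and to verify cell by cell that the intersection with $\xx_\gamma$ is again an affine space. Since an affine paving implies cohomological purity in the sense of Deligne, the purity statement will follow at once from the existence of such a paving.

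First, I would recall from the body of the paper that the paving of $\xx(E_\bjj \cdot P^\bii(\bnn))$ is indexed by the $T$-fixed points $\ep^\mu K$ lying inside the polytope $E_\bjj \cdot P^\bii(\bnn)$, the cell at $\ep^\mu$ being a contracting cell of the form $C_{B_0}$ carved out by a generic one-parameter subgroup of $T$. Each such cell admits an explicit parametrization of its dense open part of the form $\ep^\mu \byy_\bii(t_\bullet) K$ via the Berenstein-Fomin-Zelevinsky and Kamnitzer coordinates recalled in \S2, with $\val(t_i) = m_i$ prescribed by the cell and the closure controlled by the Lusztig-datum transformations of Kamnitzer.

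Next, I would pull back the defining condition $\Ad(g^{-1})\gamma \in \kg(\co)$ of the Springer fiber through this parametrization. Since $\gamma$ is diagonal with root valuations $n_1, n_2, n_2$, the condition translates, after writing $t_i = \ep^{m_i}(a_{i,0} + a_{i,1}\ep + \cdots)$, into a finite system of equations on finitely many of the coefficients $a_{i,j}$, the remaining $a_{i,j}$ staying free inside the cell. The crucial point is that the Lusztig datum $\bnn = (n_1, n_2, n_2)$ is matched exactly to the valuations of $\alpha_{12}(\gamma), \alpha_{23}(\gamma), \alpha_{13}(\gamma)$: this alignment forces the resulting equations to be affine-linear, so that they cut out an affine subspace of the cell. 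The crystal operators modify the Lusztig datum in a controlled way through the braid-move rules of Proposition 2.4, and an induction on the length of $\bjj$ — with base case the polytope $P^\bii(\bnn)$, whose Springer fiber intersection is essentially handled by the non-standard paving of \cite{chen1} — propagates the affine-linearity to the enlarged polytopes $E_\bjj \cdot P^\bii(\bnn)$.

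The main obstacle, in my view, lies in cells indexed by lattice points $\ep^\mu$ far from the natural center of the polytope, where the prescribed valuations $m_\bullet$ of the cell coordinates differ substantially from $\bnn$. For such cells the Springer fiber condition can in principle involve coefficients $a_{i,j}$ at higher orders $j$ in non-obvious ways, and one must check that no quadratic cross-terms survive once the specific valuations of $\alpha(\gamma)$ are exploited. The bookkeeping is tractable for $\gl_3$ because there are only three positive roots and $\byy_\bii(t_\bullet)$ has at most three off-diagonal entries, but this is precisely where the delicate combinatorics of the proof will be concentrated, and where the matching between $\bnn$ and the root valuations of $\gamma$ is indispensable.
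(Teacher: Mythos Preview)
Your proposal does not address the stated proposition at all. Proposition~\ref{paramv} is a result attributed to Kamnitzer and cited from \cite{k1}; the paper does not prove it but merely records it as background on the parametrization of MV-cycles. The content of the proposition is that for $t_\bullet \in A(n_\bullet)$ the point $[\byy_\bii(t_\bullet)^{-1}]$ lands in the MV-cycle $S^\bii(n_\bullet)$ with the retractions $f_{w^\bii_j \cdot B_0}$ hitting the prescribed vertices $\lambda_{w^\bii_j}$, and that the image of $A(n_\bullet)$ is dense open in $S^\bii(n_\bullet)$. Nothing about affine Springer fibers, the element $\gamma$, or the crystal operators $E_\bjj$ is relevant to this statement.

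What you have written is instead a sketch of an approach to Theorem~\ref{truncationpure}, the paper's result on the purity of truncated affine Springer fibers $\xx(E_\bjj \cdot P^\bii(\bnn)) \cap \xx_\gamma$. Even as a proof of that theorem, your outline diverges from the paper's argument: the paper does not parametrize cells by the BFZ coordinates $\byy_\bii(t_\bullet)$ and then impose the Springer condition on the Taylor coefficients $a_{i,j}$. Rather, it uses the explicit matrix descriptions of the contracting cells $C_B(P^\bii(n_\bullet))$ from Theorem~\ref{cell}, applies the affineness criterion of Theorem~\ref{criterion} (which is phrased in terms of the edge counts $l_\alpha^B$), and proceeds by induction on the length of $\bjj$, peeling off cells in the order indicated by Figures~\ref{order1} and~\ref{order2}. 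The base case $\bjj$ of length $2n_2$ is handled by the identity $\xx(E_\bjj \cdot P^\bii(\bnn)) \cap \xx_\gamma = \xx(E_\bjj \cdot P^\bii(\bnn))$, so that the Springer condition becomes vacuous and Proposition~\ref{pavemv} applies directly.
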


In \cite{k2}, Kamnitzer determined the crystal structure on the MV-polytopes. Let $\bii$ be any reduced word of $w_{0}$ such that $i_{m}=i$. For any $(n_{\bullet})\in \bz_{\geq 0}^{m}$, we have

\begin{eqnarray*}
F_{i}\cdot P^{\bii}(n_{\bullet})&=&P^{\bii}(n_{1}, \cdots, n_{m-1}, n_{m}+1);\\
E_{i}\cdot P^{\bii}(n_{\bullet})&=& \begin{cases}
P^{\bii}(n_{1}, \cdots, n_{m-1}, n_{m}-1),& \text{ if } n_{m}\geq 1\\
0, &\text{ if } n_{m}=0.
\end{cases}
\end{eqnarray*}

One proves easily that the operation doesn't depend on the choice of $\bii$. The action of MV-cycle is induced by the action on the MV-polytope.

%\begin{cor}

%For any MV-cycle $S^{\bii}(n_{\bullet})$, we have
%$
%E_{i}\cdot S^{\bii}(n_{\bullet})\subset S^{\bii}(n_{\bullet})
%$ 
%for any $i$.
%\end{cor}

%This is a simple consequence of proposition \ref{paramv}.

\section{Truncated affine grassmannians}

\subsection{Defining equations}

For $w\in W$, let $U_{w}=wU_{0}w^{-1}$. It is the unipotent radical of the Borel subgroup $w\cdot B_{0}\in \cp(T)$. Given $\lambda\in X_{*}(T)$, we have the semi-infinite orbit
$$
S_{\lambda}^{w}=U_{w}(F)\ep^{\lambda}K/K.
$$

\begin{lem}[Mirkovic-Vilonen]\label{semi-infinite}

We have
\begin{enumerate}
\item
$$
\overline{S_{\lambda}^{w}}=\bigsqcup_{\mu\prec_{w}\lambda}S_{\mu}^{w},$$ 
where $\prec_{w}$ is the Bruhat-Tits order on $X_{*}(T)$ with respect to $w\cdot B_{0}$.

\item

Inside $\overline{S_{\lambda}^{w}}$, the boundary of $S_{\lambda}^{w}$ is given by a hyperplane section under an embedding of $\xx$ in projective space.

\end{enumerate}

\end{lem}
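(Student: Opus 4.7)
The plan is first to reduce to $w = e$ by conjugating with a lift $\bar{w}$ of $w$, an automorphism of $\xx$ that carries $S_\mu^w$ to $S_{w^{-1}\mu}^e$ and the order $\prec_w$ to the standard Bruhat-Tits order $\prec$; we may then assume $U_w = U_0$ and write $S_\mu := U_0(F)\ep^\mu K/K$, with $\mu \prec \lambda$ meaning $\lambda - \mu \in \sum_{\alpha \in \Delta}\bz_{\geq 0}\alpha^\vee$.

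For part (1), the closure $\overline{S_\lambda}$ is $U_0(F)$-stable, so it is a union $\bigsqcup_{\mu \in I(\lambda)} S_\mu$ with $I(\lambda)\ni\lambda$, and the task is to show $I(\lambda) = \{\mu : \mu \prec \lambda\}$ by two separate inclusions. For $\supseteq$, transitivity of closure reduces the claim to the elementary step $S_\mu \subseteq \overline{S_{\mu+\alpha^\vee}}$ for each simple $\alpha \in \Delta$; this is verified by restricting to the rank-one subgrassmannian attached to $\alpha$ and exhibiting an explicit one-parameter family inside $S_{\mu+\alpha^\vee}$ which specializes to a point of $S_\mu$ when the parameter degenerates. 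For $\subseteq$, I would use the $\bbg_m$-action via a regular dominant cocharacter $\eta$ of $T$: each $S_\mu$ is the attracting cell of $\ep^\mu$, and embedding $\xx$ projectively via an ample line bundle $\cl_\Lambda$ attached to a regular dominant weight $\Lambda$, the fiber of $\cl_\Lambda$ at $\ep^\mu$ carries $T$-weight $\Lambda(\mu)$; semicontinuity combined with the fact that $S_\lambda$ lies in the union of weight-$\leq \Lambda(\lambda)$ strata of the projective embedding forces $\Lambda(\mu) \leq \Lambda(\lambda)$ for every regular dominant $\Lambda$, which together with the integrality of $\mu,\lambda \in X_*(T)$ yields $\mu \prec \lambda$.

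For part (2), I would exhibit a global section of $\cl_\Lambda$ whose zero locus on $\overline{S_\lambda}$ is exactly $\partial S_\lambda$. Within the $G(F)$-ind-representation $H^0(\xx,\cl_\Lambda)$, let $v_\Lambda^-$ be the (up to scalar unique) $U_0^-(F)$-invariant weight vector of weight $w_0\Lambda$; translating by $\ep^\lambda$ yields a section that is $U_0(F)$-semi-invariant, hence nonvanishing on the entire orbit $S_\lambda$, while vanishing on every strictly smaller orbit $S_\mu$, $\mu \prec \lambda$, $\mu \neq \lambda$, since such $\ep^\mu$ lie strictly below the maximum-weight stratum in the embedding. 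The vanishing locus of a section of a line bundle is by definition a hyperplane section in the associated projective embedding, which gives (2).

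The chief difficulty is the $\subseteq$ direction in part (1): one must rule out limits of points of $S_\lambda$ landing at $T$-fixed points $\ep^\mu$ with $\lambda - \mu \notin \sum_{\alpha\in\Phi^+}\bz_{\geq 0}\alpha^\vee$. This is typically handled via Plücker coordinates on the loop group in the spirit of Faltings, and the passage from the real convexity conclusion to the integer Bruhat-Tits order uses the discreteness of $X_*(T)$ inside $\ka_T$.
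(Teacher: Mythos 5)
The paper does not prove this lemma: it is stated with the attribution ``[Mirkovic--Vilonen]'' and used as an imported black box (the reference is \cite{mv}), so there is no internal proof to compare against. Your outline is the standard one, and it does match the approach of Mirkovi\'c--Vilonen and Kamnitzer: reduce to $w=e$ by conjugation, prove $\supseteq$ in part (1) via the elementary step $S_\mu\subset\overline{S_{\mu+\alpha^\vee}}$ reduced to a rank-one computation in the Levi attached to $\alpha$, prove $\subseteq$ via a projective embedding and semicontinuity, and prove (2) by exhibiting an invariant section of an ample bundle vanishing exactly on the boundary. That skeleton is right and correctly identifies the hard step.

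The one place where the write-up is genuinely imprecise is the mechanism invoked for $\subseteq$ in (1). You posit a family of ample line bundles $\cl_\Lambda$ indexed by regular dominant weights $\Lambda$, with fiber at $\ep^\mu$ of $T$-weight ``$\Lambda(\mu)$.'' For the affine Grassmannian of a simply connected group the Picard group is $\bz$ (generated by the determinant/ample generator), so there is no such $\Lambda$-family of line bundles; moreover $\Lambda(\mu)$ is a number, not a character, so ``the fiber carries $T$-weight $\Lambda(\mu)$'' conflates two different things. What is true, and what makes the argument go through, is one of the following two cleaner formulations. Either keep a \emph{single} $T$-linearized determinant bundle $\cl$, note that its fiber weight at $\ep^\mu$ is a character $\chi_\mu$ that depends linearly on $\mu$ (for $\gl_n$ it is $-\sum_i\mu_i\,\epsilon_i$), and vary the contracting \emph{cocharacter} $\eta$ over the interior of the dominant cone of $\widetilde T=T\times\bbg_m$; the Bia\l ynicki-Birula inequality $\langle\chi_\mu,\eta\rangle\le\langle\chi_\lambda,\eta\rangle$ for all such $\eta$, combined with integrality, gives exactly $\lambda-\mu\in\sum\bz_{\ge0}\alpha_i^\vee$. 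Or, entirely equivalently and closer to what this very paper then does, replace the line-bundle weights by the valuation functions $D_\varpi(gK)=\val(g^{-1}\cdot v_\varpi)$ attached to the fundamental representations (these are exactly the $D_\varpi$ introduced right after the lemma); upper semicontinuity of $D_{\varpi_i}$ in $gK$ gives $D_{\varpi_i}\ge-\langle\lambda,\varpi_i\rangle$ on $\overline{S_\lambda}$, and reading this on $S_\mu$ gives $\langle\varpi_i,\lambda-\mu\rangle\ge0$ for all $i$. Either way the idea you flag at the end (Pl\"ucker coordinates in the spirit of Faltings, discreteness of $X_*(T)$) is correct; you just should not phrase the intermediate step in terms of a $\Lambda$-indexed family of bundles. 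With that repair the argument for (1) is complete, and your construction for (2) — translating a $U_0(F)$-invariant extremal weight vector by $\ep^\lambda$ and taking its zero locus — is the standard way to exhibit $\partial S_\lambda$ as a hyperplane section.
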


Given a positive $(G,T)$-orthogonal family $(\lambda_{w})_{w\in W}$, we deduce from the above lemma that
$$
\xx((\lambda_{w})_{w\in W})=\bigcap_{w\in W} \overline{S_{\lambda_{w}}^{w}}.
$$
To find the defining equations of the truncated affine grassmannian is thus the same as to find those of the semi-infinite orbits.

For each chamber weight $\varpi=w\cdot \varpi_{i}$, we will define a function $D_{\varpi}:\xx\to \bz$ in the following way: Let $V_{\varpi_{i}}$ be the irreducible representation of $G$ of highest weight $\varpi_{i}$, let $v_{\varpi_{i}}$ be the highest weight vector, let $v_{\varpi}=\overline{w}\cdot v_{\varpi_{i}}$. The group $G_{F}$ acts on $V_{\varpi_{i},F}$ by extension of scalars. Given $v\in V_{\varpi_{i}, F}$, let $\val(v)$ be the unique $l\in \bz$ such that $v\in V_{\varpi_{i}}\otimes \kp^{l}$ while $v\notin V_{\varpi_{i}}\otimes \kp^{l+1}$. For $gK\in \xx$, we define
$$
D_{\varpi}(gK)=\val(g^{-1}\cdot v_{\varpi}).
$$

The function $D_{\varpi}$ has the property:
$$
D_{\varpi}(ugK)=D_{\varpi}(gK),\quad \forall\, u\in U_{w}(F),
$$
since $u\cdot v_{\varpi}=v_{\varpi}$. Combined with the Iwasawa decomposition, we get

\begin{lem}[Kamnitzer]

For each $w\in W$, the function $D_{w\cdot\varpi_{i}}$ takes the constant value $-\langle  \lambda_{w}, w\cdot \varpi_{i} \rangle$ on $S_{\lambda_{w}}^{w}$. In particular,
$$
S_{\lambda_{w}}^{w}=\{gK\in \xx\mid D_{w\cdot \varpi_{i}}(gK)=-\langle  \lambda_{w}, w\cdot \varpi_{i} \rangle,\,\forall\, i=1,\cdots, r\}.
$$

\end{lem}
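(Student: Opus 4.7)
The plan is to establish the two claims in sequence: that $D_{w\cdot\varpi_i}$ takes the stated constant value on $S^w_{\lambda_w}$, and then the converse characterization. For the first claim I would proceed in three small steps. First, $D_\varpi$ descends to a well-defined function on $\xx=G(F)/K$ because $K=G(\co)$ preserves the integral lattice $V_{\varpi_i}\otimes\co\subset V_{\varpi_i,F}$, so replacing $g$ by $gk$ does not change $\val(g^{-1}v_\varpi)$. Second, the function is left $U_w(F)$-invariant: since $v_{\varpi_i}$ is the highest weight vector it is fixed by $U_0$, whence $v_\varpi=\overline{w}\cdot v_{\varpi_i}$ is fixed by $U_w=\overline{w}\, U_0\,\overline{w}^{-1}$, giving $D_\varpi(ugK)=\val(g^{-1}u^{-1}v_\varpi)=D_\varpi(gK)$. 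Third, since $v_\varpi$ is a $T$-weight vector of weight $w\cdot\varpi_i$,
$$
D_{w\cdot\varpi_i}(\ep^{\lambda_w}K)=\val(\ep^{-\lambda_w}\cdot v_\varpi)=-\langle\lambda_w,w\cdot\varpi_i\rangle.
$$
Combining the three observations propagates the constant value across the entire orbit $S^w_{\lambda_w}=U_w(F)\ep^{\lambda_w}K/K$.

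For the converse inclusion I would apply the Iwasawa decomposition $G(F)=U_w(F)T(F)K$ relative to the Borel $w\cdot B_0$: each $gK\in\xx$ has the form $u\ep^\mu K$ for a uniquely determined $\mu\in X_*(T)$, so $gK$ lies on the semi-infinite orbit $S^w_\mu$. Applying the part already proved, $D_{w\cdot\varpi_i}(gK)=-\langle\mu,w\cdot\varpi_i\rangle$, and equality with $-\langle\lambda_w,w\cdot\varpi_i\rangle$ for every $i=1,\ldots,r$ forces $\mu-\lambda_w$ to annihilate $w\cdot\varpi_1,\ldots,w\cdot\varpi_r$. For a semisimple simply-connected group this alone gives $\mu=\lambda_w$, while for $\gl_{r+1}$ the remaining central direction is pinned down by the connected-component condition $\nu_G(gK)=\nu_G(\ep^{\lambda_w})$ that is implicit in the truncated affine grassmannian setting.

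The main obstacle is essentially cosmetic, namely the book-keeping around the center of $\gl_{r+1}$ in the converse direction, since the fundamental weights $\varpi_1,\ldots,\varpi_r$ fail to span $X^*(T)_{\bq}$ by the determinantal character; once one restricts to a fixed connected component of $\xx$ this is immaterial. All other ingredients reduce directly to the highest-weight property of $v_{\varpi_i}$, the weight-vector relation $t\cdot v_\varpi=(w\cdot\varpi_i)(t)\,v_\varpi$, and Iwasawa.
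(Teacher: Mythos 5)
Your proof follows the same route as the paper's own sketch: the paper observes the $U_{w}(F)$-invariance of $D_{\varpi}$ (since $U_{w}$ fixes $v_{\varpi}$) and then says ``combined with the Iwasawa decomposition, we get'' the lemma, which is exactly what you spell out. Your three preliminary steps (well-definedness modulo $K$, $U_{w}(F)$-invariance, and the computation of $D_{w\cdot\varpi_{i}}(\ep^{\lambda_{w}}K)=-\langle\lambda_{w},w\cdot\varpi_{i}\rangle$ from the $T$-weight of $v_{\varpi}$) together with Iwasawa relative to $w\cdot B_{0}$ are precisely the ingredients being invoked. You also correctly flag a point that the paper silently glosses over: for $G=\gl_{r+1}$ the $r$ chamber weights $w\cdot\varpi_{1},\ldots,w\cdot\varpi_{r}$ span only a rank-$r$ sublattice of $X^{*}(T)$, so the conditions $D_{w\cdot\varpi_{i}}(gK)=-\langle\lambda_{w},w\cdot\varpi_{i}\rangle$ for $i=1,\ldots,r$ determine the Iwasawa parameter $\mu$ only up to the annihilator of that sublattice; your observation that the leftover freedom is eliminated by the connected-component condition $\nu_{G}(gK)=\nu_{G}(\ep^{\lambda_{w}})$ is correct, since the ambiguity shifts $\nu_{G}$ by the same integer. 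So the proposal is correct, matches the paper's intended argument, and additionally patches a mild imprecision in the stated $\gl_{r+1}$ version.
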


Kamnitzer also related the function $D_{\varpi}$ with the minor $\Delta_{\varpi}$. In general, one has
$$
\val(\Delta_{\varpi}(g^{-1}))\geq D_{\varpi}(gK),
$$
and for each $L\in \xx$, there exists a representative $g\in G(F)$ such that $L=gK$ and one has equality in the above inequality. By lemma \ref{semi-infinite}, we have
$$
\overline{S_{\lambda_{w}}^{w}}=\{gK\in \xx\mid \val(\Delta_{w\cdot \varpi_{i}}(g^{-1}))\geq -\langle  \lambda_{w}, w\cdot \varpi_{i} \rangle,\,\forall\, i=1,\cdots, r\}.
$$

\begin{cor}

Given a positive $(G,T)$-orthogonal family $(\lambda_{w})_{w\in W}$, the truncated affine grassmannian $\xx((\lambda_{w})_{w\in W})$ is defined by the equations
\begin{equation}\label{grequation}
\val(\Delta_{w\cdot \varpi_{i}}(g^{-1}))\geq -\langle  \lambda_{w}, w\cdot \varpi_{i} \rangle,\quad \forall\, w\in W,\,\forall\, i=1,\cdots, r.
\end{equation}

\end{cor}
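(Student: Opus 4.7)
The corollary should follow essentially by stitching together the lemma just proved (Kamnitzer's description of $\overline{S_{\lambda_w}^w}$ via minors) with the equality
$$
\xx((\lambda_{w})_{w\in W})=\bigcap_{w\in W} \overline{S_{\lambda_{w}}^{w}}
$$
already recorded above. So my plan is to make that pipeline explicit.

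First, I would verify the displayed equality by expanding both sides. The right-hand side consists of $y\in\xx$ such that for every $w\in W$, the orbit condition forces $f_{w\cdot B_{0}}(y)\preceq_{w}\lambda_{w}$; but the $\preceq_{w}$-inequalities for all $w$ are exactly what it means for every vertex $H_{B}(y)$ of $\ec(y)$ to lie in the positively $(G,T)$-orthogonal polytope $\ec((\lambda_{w})_{w\in W})$, by the standard description of such a polytope as the intersection of half-spaces cut out by the chamber weights at each vertex $\lambda_{w}$. The connected-component condition $\nu_{G}(y)=\nu_{G}(\lambda_{w})$ is automatic on each $\overline{S_{\lambda_{w}}^{w}}$, so it transfers to the intersection.

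Second, I would substitute Kamnitzer's description
$$
\overline{S_{\lambda_{w}}^{w}}=\{gK\in \xx\mid \val(\Delta_{w\cdot \varpi_{i}}(g^{-1}))\geq -\langle  \lambda_{w}, w\cdot \varpi_{i} \rangle,\,\forall\, i=1,\cdots, r\}
$$
for each $w\in W$, and take the intersection. Since the defining condition for each $\overline{S_{\lambda_{w}}^{w}}$ is given by $r$ inequalities, one for each fundamental weight at level $i$, taking $w$ through $W$ produces exactly the $|W|\cdot r$ inequalities $\val(\Delta_{w\cdot\varpi_{i}}(g^{-1}))\geq -\langle \lambda_{w},w\cdot\varpi_{i}\rangle$ for all $(w,i)$. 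This is the system (\ref{grequation}), as desired.

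There is no substantial obstacle here; the one point that needs a line of justification is the polytope/inequality translation in step one, namely that for a positive $(G,T)$-orthogonal family the containment $\ec(y)\subset \ec((\lambda_{w})_{w\in W})$ is equivalent to the system of vertex-wise Bruhat inequalities $f_{w\cdot B_{0}}(y)\preceq_{w}\lambda_{w}$. Once that is in place, the corollary is a direct combination of the previous two statements.
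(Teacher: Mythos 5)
Your proposal is correct and follows the same route the paper intends: the corollary is simply the conjunction of the identity $\xx((\lambda_{w})_{w\in W})=\bigcap_{w\in W}\overline{S_{\lambda_{w}}^{w}}$ (recorded immediately after the Mirkovic--Vilonen lemma) with Kamnitzer's valuative description of $\overline{S_{\lambda_{w}}^{w}}$, and the paper states it without further argument precisely because the combination is mechanical. Your added remark on translating the containment $\ec(y)\subset\ec((\lambda_{w})_{w\in W})$ into the vertex-wise Bruhat inequalities $f_{w\cdot B_{0}}(y)\preceq_{w}\lambda_{w}$ via the half-space description of a positive $(G,T)$-orthogonal polytope is accurate and is the one point worth spelling out.
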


We call 
$$
S((\lambda_{w})_{w\in W}):=\bigcap_{w\in W} S^{w}_{\lambda_{w}}
$$
the \textit{Gelfand-Goresky-Macpherson-Serganova} (or GGMS) strata. It is defined by the equations
$$
D_{w\cdot \varpi_{i}}(g)= -\langle  \lambda_{w}, w\cdot \varpi_{i} \rangle,\quad \forall\, w\in W,\,\forall\, i=1,\cdots, r.
$$

\begin{cor}\label{closuremv}

Given a positive $(G,T)$-orthogonal family $(\lambda_{w})_{w\in W}$. If the GGMS strata $S((\lambda_{w})_{w\in W})$ is non-empty, then
$$
\xx((\lambda_{w})_{w\in W})=\overline{S((\lambda_{w})_{w\in W})}.
$$

\end{cor}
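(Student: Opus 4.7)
The inclusion $\overline{S((\lambda_{w})_{w\in W})}\subset\xx((\lambda_{w})_{w\in W})$ is immediate, since $S((\lambda_{w})_{w\in W})\subset\xx((\lambda_{w})_{w\in W})$ and the latter is closed in $\xx$ by the defining inequalities (\ref{grequation}). For the reverse inclusion, I would start from Lemma \ref{semi-infinite}(2): for each $w\in W$ there is a hyperplane $H_{w}$ in a suitable projective embedding of $\xx$ such that $\overline{S^{w}_{\lambda_{w}}}\setminus S^{w}_{\lambda_{w}}=\overline{S^{w}_{\lambda_{w}}}\cap H_{w}$. Intersecting with $\xx((\lambda_{w})_{w\in W})\subset\bigcap_{w\in W}\overline{S^{w}_{\lambda_{w}}}$ gives
$$
\xx((\lambda_{w})_{w\in W})\setminus S((\lambda_{w})_{w\in W})=\bigcup_{w\in W}\bigl(\xx((\lambda_{w})_{w\in W})\cap H_{w}\bigr),
$$
which already exhibits $S((\lambda_{w})_{w\in W})$ as open in $\xx((\lambda_{w})_{w\in W})$.

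Using the hypothesis $S((\lambda_{w})_{w\in W})\neq\emptyset$: any $y\in S$ avoids every $H_{w}$, so no $H_{w}$ contains $\xx((\lambda_{w})_{w\in W})$, and each $\xx((\lambda_{w})_{w\in W})\cap H_{w}$ is a proper closed subvariety of codimension one in every irreducible component it meets. To conclude $\overline{S((\lambda_{w})_{w\in W})}=\xx((\lambda_{w})_{w\in W})$, it remains to show that $S((\lambda_{w})_{w\in W})$ is dense in $\xx((\lambda_{w})_{w\in W})$, i.e., meets every irreducible component. The plan is to invoke Proposition \ref{paramv}: the nonemptiness of the GGMS stratum identifies $(\lambda_{w})_{w\in W}$ with an MV-polytope, so for any reduced word $\bii$ of $w_{0}$ with $\bii$-Lusztig datum $n_{\bullet}$, the map $t_{\bullet}\mapsto[\byy_{\bii}(t_{\bullet})^{-1}]$ lands in $S((\lambda_{w})_{w\in W})$ by part (1) (the formula $f_{w_{j}^{\bii}\cdot B_{0}}([\byy_{\bii}(t_{\bullet})^{-1}])=\lambda_{w_{j}^{\bii}}$ identifies the semi-infinite orbits through any such point), and has dense image in the MV-cycle $S^{\bii}(n_{\bullet})\subset\xx((\lambda_{w})_{w\in W})$ by part (2). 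Hence $\overline{S((\lambda_{w})_{w\in W})}\supset S^{\bii}(n_{\bullet})$, an irreducible closed subvariety of dimension $\rho(\mu)$ with $\mu=\sum_{j}n_{j}\beta_{j}^{\bii}$.

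The main obstacle is the final identification $\xx((\lambda_{w})_{w\in W})=S^{\bii}(n_{\bullet})$, i.e., ruling out any additional irreducible components of the truncated affine grassmannian. My plan is a dimension argument combined with the hyperplane section analysis above: an upper bound on $\dim\xx((\lambda_{w})_{w\in W})$ coming from the explicit inequalities (\ref{grequation}) should match $\dim S^{\bii}(n_{\bullet})=\rho(\mu)$, so any hypothetical extra irreducible component $Z$ of $\xx((\lambda_{w})_{w\in W})$ either exceeds this dimension, contradicting the upper bound, or satisfies $Z\cap S((\lambda_{w})_{w\in W})=\emptyset$, forcing $Z\subset H_{w}$ for some $w$ and contradicting the codimension-one conclusion on every component $Z$ meets. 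Once this is secured, the chain
$$
\overline{S((\lambda_{w})_{w\in W})}\supset S^{\bii}(n_{\bullet})=\xx((\lambda_{w})_{w\in W})\supset\overline{S((\lambda_{w})_{w\in W})}
$$
closes the argument.
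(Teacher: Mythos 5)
Your opening moves are correct: the inclusion $\overline{S((\lambda_{w})_{w\in W})}\subset\xx((\lambda_{w})_{w\in W})$ is clear, and Lemma \ref{semi-infinite}(2) does give that $S((\lambda_{w})_{w\in W})=\xx((\lambda_{w})_{w\in W})\setminus\bigcup_{w}H_{w}$ is open. The paper itself offers no explicit proof of this corollary, so you are on your own for the density step, and that is precisely where there is a genuine gap.

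The assertion ``each $\xx((\lambda_{w})_{w\in W})\cap H_{w}$ is a proper closed subvariety of codimension one in every irreducible component it meets'' does not follow from the fact that $H_{w}$ contains no point of $S$. Krull's principal ideal theorem gives codimension one only in the components of $\xx((\lambda_{w})_{w\in W})$ that are \emph{not} contained in $H_{w}$; if a component $Z$ satisfies $Z\subset H_{w}$ then $Z\cap H_{w}=Z$ has codimension zero. So the codimension-one statement is exactly equivalent to what you need to prove (no component lies inside any $H_{w}$), and the final branch of your argument --- ``$Z\cap S=\emptyset$ forces $Z\subset H_{w}$, contradicting the codimension-one conclusion'' --- is circular: you invoke the codimension-one claim to exclude $Z\subset H_{w}$, but the claim was never valid for such $Z$ in the first place.

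Two further weak points. First, the dimension bound $\dim\xx((\lambda_{w})_{w\in W})\leq\rho(\mu)$ is not a formal consequence of the inequalities (\ref{grequation}); it is the hard equi-dimensionality input from Mirkovic--Vilonen (their theorem that $\overline{S^{1}_{\lambda_{1}}}\cap\overline{S^{w_{0}}_{\lambda_{w_{0}}}}$ is pure of dimension $\rho(\lambda_{1}-\lambda_{w_{0}})$, proved with perverse sheaves), and your ``should match'' glosses over the dependence. Second, Proposition \ref{paramv}(1) as stated only pins down $f_{w_{j}^{\bii}\cdot B_{0}}$ for the $m+1$ Weyl group elements along the $\bii$-path, not $f_{B}$ for every $B\in\cp(T)$; to conclude that the image of $A(n_{\bullet})$ lands in $S((\lambda_{w})_{w\in W})$ you would need either to run the argument simultaneously for another reduced word (enough for $\gl_{3}$, since the two $\bii$-paths cover all of $W$) or to cite Kamnitzer's stronger form of the statement. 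None of these are fatal to the strategy, but as written the proposal does not close the argument: ruling out an irreducible component of $\xx((\lambda_{w})_{w\in W})$ that is entirely contained in some boundary divisor $H_{w}$ requires a genuinely new input (for instance, the GGMS stratification together with a strict dimension drop for proper sub-polytopes, or Kamnitzer's classification of MV-cycles as closures of GGMS strata), and this is where the proof is currently missing.
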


The affine grassmannian admits action of a torus. The torus $T$ acts on $\xx=G(F)/K$ by left multiplication. Let $\bbg_{m}$ be the rotation torus, it acts on $F=k((\ep))$ by 
$$
t*\ep^{n}=t^{n}\ep^{n},\quad\forall\, t\in \bbg_{m},\, n\in \bz.
$$ 
This action induces an action of $\bbg_{m}$ on $\xx=G(F)/G(\co)$. Let $\wtt=T\times \bbg_{m}$, it acts on $\xx$ with discrete fixed points and discrete $1$-dimensional orbits. To describe them, recall that $\wtt$ acts on $\kg_{F}$ as well, with the eigenspace decomposition
$$
\kg_{F}=\bigoplus_{m\in \bz}\ep^{m}\kt \oplus \bigoplus_{(\alpha,\,n)\in \Phi\times \bz} \ep^{n}\kg_{\alpha}+\ep^{N}\kg,\quad \forall\,N\gg 0.
$$ 
Let $U_{\alpha,n}$ be the subgroup of $G_{F}$ whose Lie algebra is $\ep^{n}\kg_{\alpha}$. The fixed points $\xx^{\wtt}$ is the same as $\xx^{T}$, the $1$-dimensional $\wtt$-orbits attached to $\ep^{\nu},\,\nu\in X_{*}(T)$, are
\begin{equation}\label{alphaorbit}
U_{\alpha,n}\ep^{\nu}K/K,\quad (\alpha,\,n)\in \Phi\times \bz.
\end{equation}
We will say that the orbit (\ref{alphaorbit}) is \textit{in the }$\alpha$ \textit{direction}.

The torus $\wtt$ acts on the truncated affine grassmannian $\xx((\lambda_{w})_{w\in W})$, with finitely many fixed points and finitely many $1$-dimensional $\wtt$-orbits. Let $\Gamma$ be its $1$-skeleton, i.e. it is the graph with vertices being the fixed points and with the edges being the $1$-dimensional $\wtt$-orbits. Two vertices are linked by an edge if and only if they lie on the closure of the corresponding $1$-dimensional $\wtt$-orbit. For each $w\in W$, let $\wt(\lambda_{w})$ be the number of edges in $\Gamma$ which is linked to $\lambda_{w}$. For each $\alpha\in \Phi_{w\cdot B_{0}}^{+}$, let $L_{w,\alpha}((\lambda_{w})_{w\in W})$ be the number of edges in $\Gamma$ which is linked to $\lambda_{w}$ in the $\alpha$ direction. We have
$$
\wt(\lambda_{w})=\sum_{\alpha\in \Phi_{w\cdot B_{0}}^{+}} L_{w,\alpha}((\lambda_{w})_{w\in W}).
$$
Let $T_{\lambda_{w}}(\xx((\lambda_{w})_{w\in W}))$ be the tangent space of $\xx((\lambda_{w})_{w\in W})$ at $\lambda_{w}$, let $T_{\lambda_{w}}(\xx((\lambda_{w})_{w\in W}))_{\alpha}$ be its $\alpha$-eigenspace with respect to the action of the torus $T$.

Writing out explicitly the equations in (\ref{grequation}), we get 

\begin{cor} \label{dimtangentmv}

We have 
$$
\dim(T_{\lambda_{w}}(\xx((\lambda_{w})_{w\in W})))=\wt(\lambda_{w}),
$$
and
$$
\dim(T_{\lambda_{w}}(\xx((\lambda_{w})_{w\in W}))_{\alpha})=L_{w,\alpha}((\lambda_{w})_{w\in W}).
$$

\end{cor}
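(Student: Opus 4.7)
The plan is to read off the tangent space directly from the explicit equations~(\ref{grequation}), exploiting the $T$-equivariance of the minors~$\Delta_\varpi$ to decouple distinct weight spaces. First I would observe that $\ep^{\lambda_w}K$ lies in the open semi-infinite cell $S_{\lambda_w}^{w}=U_{w}(F)\ep^{\lambda_w}K/K$, so a formal neighborhood of $\ep^{\lambda_w}K$ inside $\overline{S_{\lambda_w}^w}$ is parametrized by $\exp(\xi)\ep^{\lambda_w}K$ with $\xi$ ranging in $\lie(U_w)\otimes F\,/\,\Ad(\ep^{\lambda_w})(\lie(U_w)\otimes\co)$. Under~$T$ this quotient decomposes as
\[
\bigoplus_{\alpha\in\Phi_{w\cdot B_0}^{+}}\kg_\alpha\otimes F\big/\Ad(\ep^{\lambda_w})(\kg_\alpha\otimes\co),
\]
with distinguished basis $\{\ep^n e_\alpha\}_{n<\langle\lambda_w,\alpha\rangle}$ in the $\alpha$-summand; each basis vector is the tangent direction of the $1$-dimensional $\wtt$-orbit $U_{\alpha,n}\ep^{\lambda_w}K/K$ through the fixed point. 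Since $\xx((\lambda_w)_{w\in W})\subset\overline{S_{\lambda_w}^w}$ near $\ep^{\lambda_w}K$, this already explains why only $\alpha\in\Phi_{w\cdot B_0}^{+}$ contributes in the statement.

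Next I would linearize the defining inequalities~(\ref{grequation}) at the fixed point. Because $G=\gl_{r+1}$ and each root vector satisfies $e_\alpha^{2}=0$, the substitution $g=(1+t\ep^n e_\alpha)\ep^{\lambda_w}$ makes each $\Delta_{w'\cdot\varpi_i}(g^{-1})$ exactly affine in~$t$; hence the inequality
\[
\val\,\Delta_{w'\cdot\varpi_i}(g^{-1})\geq -\langle\lambda_{w'},w'\cdot\varpi_i\rangle
\]
holds throughout the $1$-parameter orbit if and only if it holds for the linear coefficient in~$t$, the constant term being already handled by the positivity of the $(G,T)$-orthogonal family. Since each minor $\Delta_{w'\cdot\varpi_i}$ is itself a $T$-weight vector, linear terms attached to distinct $\alpha$-weights never couple, so the first-order expansion of the whole system in coordinates $\xi=\sum_{\alpha,n}\xi_{\alpha,n}\ep^n e_\alpha$ decomposes into independent weight-by-weight linear constraints on the $\xi_{\alpha,n}$.

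A coordinate $\xi_{\alpha,n}$ then survives the linearization precisely when the full $1$-parameter family $U_{\alpha,n}\ep^{\lambda_w}K/K$ satisfies~(\ref{grequation}), i.e.\ when this orbit is contained in $\xx((\lambda_w)_{w\in W})$; by definition of the $1$-skeleton~$\Gamma$ these are exactly the edges incident to $\lambda_w$ in the $\alpha$-direction, yielding
\[
\dim T_{\lambda_w}\bigl(\xx((\lambda_w)_{w\in W})\bigr)_{\alpha}=L_{w,\alpha}((\lambda_w)_{w\in W}),
\]
and summing over $\alpha\in\Phi_{w\cdot B_0}^{+}$ gives $\wt(\lambda_w)$. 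The main obstacle I anticipate is making the bridge ``linearized condition $\Leftrightarrow$ integral orbit containment'' completely rigorous: one must verify minor by minor that the first-order expansion of $\val\,\Delta_{w'\cdot\varpi_i}$ at $\ep^{\lambda_w}K$ along $\ep^n e_\alpha$ reduces to a single non-trivial linear constraint equivalent to the integer inequality describing $U_{\alpha,n}\ep^{\lambda_w}K/K\subset\xx((\lambda_w)_{w\in W})$. This in turn rests on the affine dependence on~$t$ peculiar to the $\gl$-case and on the $T$-homogeneity of each $\Delta_\varpi$.
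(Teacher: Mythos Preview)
Your approach is exactly what the paper intends: its entire proof is the single clause ``Writing out explicitly the equations in~(\ref{grequation}), we get'', and you have supplied those details. Regarding the obstacle you anticipate, the cleanest resolution is to observe that $\xx((\lambda_w)_{w\in W})$ is in fact $\wtt$-stable (loop rotation preserves each $\overline{S^{w}_{\lambda}}$), so the tangent space at the $\wtt$-fixed point $\ep^{\lambda_w}$ already splits into one-dimensional $(\alpha,n)$-weight spaces and the passage from ``$\ep^{n}e_{\alpha}$ survives the linearization'' to the dimension count is automatic; equivalently, in your direct computation the cofactor of the $(i,j)$ entry in $\Delta_{w'\cdot\varpi_{l}}$ evaluated at the diagonal matrix $\ep^{-\lambda_w}$ is either $0$ or a single power of $\ep$, so each linearized constraint on $\sum_{n}c_{n}\ep^{n}e_{\alpha}$ is of the form $c_{n}=0$ for $n$ below a fixed threshold.
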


\subsection{Truncated affine grassmannians and MV-cycles}

From now on, we work with the group $G=\gl_{3}$. Given $x\in\xx$, we are interested in the polytope $\ec(x)$ and the truncated affine Grassmannian
$
\xx(\ec(x)).
$

\begin{lem}\label{decompu}
Let $\bii$ be a reduced word of $w_{0}$. Any element in $U_{0}(F)K/K$ can be written as $[\byy_{\bii}(t_{\bullet})^{-1}]$ for some $(t_{\bullet})\in F^{3}$.
\end{lem}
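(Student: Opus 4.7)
The plan is to fix a specific reduced word, compute $\byy_{\bii}$ explicitly, and then show that every $K$-coset of $U_{0}(F)$ meets the image of this parametrization after an easy perturbation inside $U_{0}(\co)$. For $G=\gl_{3}$ the longest element $w_{0}$ admits only two reduced words, $(1,2,1)$ and $(2,1,2)$, and they are interchanged by the diagram automorphism $\alpha_{1}\leftrightarrow\alpha_{2}$; therefore it suffices to treat $\bii=(1,2,1)$.

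For the explicit computation, I would start from $\bxx_{\bii}(t_{1},t_{2},t_{3})=\bxx_{1}(t_{3})\bxx_{2}(t_{2})\bxx_{1}(t_{1})$ and perform the Gauss $LTU$-decomposition of $\bxx_{\bii}(t_{\bullet})\bar{w}_{0}^{-1}$ required to evaluate $\eta_{w_{0}}^{-1}$. The result is
$$
\byy_{\bii}(t_{1},t_{2},t_{3})=\begin{pmatrix}1 & 1/t_{1} & 1/(t_{2}t_{3})\\ 0 & 1 & (t_{1}+t_{3})/(t_{2}t_{3})\\ 0 & 0 & 1\end{pmatrix}.
$$
Writing a general element of $U_{0}(F)$ with $(1,2),(2,3),(1,3)$-entries $(a,b,c)$, this identifies the image of $\byy_{\bii}|_{(F^{\times})^{3}}$ with the Zariski-open subvariety $\Omega\subset U_{0}(F)$ defined by $a\ne 0$, $c\ne 0$, $ab-c\ne 0$, in accordance with Proposition~\ref{para1}. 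On $\Omega$ the inversion is explicit: $t_{1}=1/a$, $t_{2}=a/(ab-c)$, $t_{3}=b/c-1/a$.

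The main step is the perturbation argument. Since $K\cap U_{0}(F)=U_{0}(\co)$, the desired equality $[\byy_{\bii}(t_{\bullet})^{-1}]=[u]$ in $\xx$ is equivalent to $\byy_{\bii}(t_{\bullet})=vu^{-1}$ for some $v\in U_{0}(\co)$. Writing $u^{-1}$ with entries $(\tilde a,\tilde b,\tilde c)$ and $v$ with entries $(a',b',c')\in\co^{3}$, a direct computation shows that $vu^{-1}$ has entries $(\tilde a+a',\,\tilde b+b',\,\tilde c+a'\tilde b+c')$. Hence the task reduces to finding $(a',b',c')\in\co^{3}$ such that the three quantities
$$
\tilde a+a',\qquad \tilde c+a'\tilde b+c',\qquad (\tilde a\tilde b-\tilde c)+(\tilde a+a')b'-c'
$$
are simultaneously nonzero in $F$. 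Each is defined by a single equation in $(a',b',c')$, and since $\co=\fq[[\ep]]$ is infinite I would choose $a'$ to avoid one value, then $b'$ freely, then $c'$ to avoid two prescribed values.

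Applying the explicit inversion to $vu^{-1}\in\Omega$ then yields $t_{\bullet}\in(F^{\times})^{3}\subset F^{3}$ with $\byy_{\bii}(t_{\bullet})^{-1}K=uv^{-1}K=uK$, completing the proof. The main potential obstacle is the small case analysis in the perturbation step, but the three free parameters $(a',b',c')\in\co^{3}$ comfortably accommodate three non-vanishing conditions, so the argument is routine once the explicit formula for $\byy_{\bii}$ is in hand.
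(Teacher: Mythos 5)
Your proof is correct and follows essentially the same route as the paper: replace the representative $u$ by $uv^{-1}$ with $v\in U_{0}(\co)\subset K$ chosen so that the (finitely many) nonvanishing conditions of Proposition~\ref{para1} hold, then invert the Lusztig parametrization. The only difference is one of detail — the paper simply asserts the existence of the perturbing unit because the minors $\Delta_{\varpi}(uv^{-1})$ are non-identically-vanishing polynomials over the infinite ring $\co$, whereas you make the three nonvanishing conditions and the inverse map $t_{1}=1/a$, $t_{2}=a/(ab-c)$, $t_{3}=b/c-1/a$ fully explicit.
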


\begin{proof}

For any $u\in U_{0}(F)$, we can always find $a\in U_{0}(\co)$ such that the minors 
$$
\Delta_{\varpi}(ua)\neq 0,\quad \forall \,\varpi\in \Gamma^{\bii}.
$$ 
By proposition \ref{para1}, we have $(t_{\bullet})\in F_{3}$ such that $ua=\byy_{\bii}(t_{\bullet})^{-1}$, so
$$
uK=uaK=\byy_{\bii}(t_{\bullet})^{-1}K.$$

\end{proof}

\begin{prop}\label{reduce to mv}

Let $x\in \xx$, let $\lambda_{w'}=H_{w'\cdot B_{0}}(x),\,\forall\,w'\in W$. Take $w\in W$ such that
$$
\langle w\cdot \rho,\, \lambda_{w}-\lambda_{ww_{0}} \rangle=\min_{w'\in W}\{\langle w'\cdot \rho,\, \lambda_{w'}-\lambda_{w'w_{0}} \rangle\}.
$$
Then $w^{-1}\cdot \ec(x)$ is a MV-polytope.

\end{prop}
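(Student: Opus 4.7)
The plan is to translate the statement, via the Iwasawa decomposition and the BFZ parametrization, into a question about valuations of BFZ coordinates, then to invoke Kamnitzer's parametrization of MV-cycles (Proposition \ref{paramv}). Working with $G = \gl_{3}$, I fix the reduced word $\bii = (1, 2, 1)$ of $w_{0}\in W$. First I exploit the translation invariance of being an MV-polytope: replacing $x$ by the normalized point
$$
y := \ep^{-w^{-1}\lambda_{w}}\cdot w^{-1} x,
$$
it suffices to show that $\ec(y)$ is an MV-polytope, since $\ec(y) = w^{-1}\ec(x) - w^{-1}\lambda_{w}$ differs from $w^{-1}\ec(x)$ only by a translation. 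By construction $f_{B_{0}}(y)=0$, so $y\in U_{0}(F)K/K$, and Lemma \ref{decompu} supplies a decomposition $y=[\byy_{\bii}(t_{\bullet})^{-1}]$ for some $t_{\bullet}\in (F^{\times})^{3}$. Set $n_{i}:=\val(t_{i})$.

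The crux is to show that $n_{i}\geq 0$ for $i=1,2,3$. Once this is known, Proposition \ref{paramv}(1) places $y$ in the MV-cycle $S^{\bii}(n_{\bullet})$, and records that the vertices of $\ec(y)$ along the $\bii$-path coincide with those of the MV-polytope $P^{\bii}(n_{\bullet})$. Combined with Kamnitzer's uniqueness result (Proposition \ref{lusztig}), this forces the identification $\ec(y)=P^{\bii}(n_{\bullet})$, which is an MV-polytope. Translating back yields the claim for $w^{-1}\ec(x)$.

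To establish non-negativity, I would argue by contradiction. Suppose $n_{i_{0}}<0$ for some index $i_{0}$. I apply the unique nontrivial braid relation in $A_{2}$, passing to the other reduced word $\bii'=(2,1,2)$ via the BFZ transition formula of Proposition \ref{para2}:
$$
t'_{1}=\frac{t_{2}t_{3}}{t_{1}+t_{3}},\qquad t'_{2}=t_{1}+t_{3},\qquad t'_{3}=\frac{t_{1}t_{2}}{t_{1}+t_{3}}.
$$
Tracking the resulting valuations, I would produce a different $w'\in S_{3}$ for which the corresponding normalization $\ep^{-(w')^{-1}\lambda_{w'}}\cdot(w')^{-1}x$ sits ``deeper'' in $U_{0}(F)K/K$, in the concrete sense that the linear functional
$$
f(w'):=\langle w'\cdot\rho,\,\lambda_{w'}-\lambda_{w'w_{0}}\rangle
$$
strictly decreases: $f(w')<f(w)$. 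This contradicts the minimality that singled out $w$.

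The main obstacle, as expected, is this last reduction: it is an explicit rank-$2$ bookkeeping in which, for each of the six rotations $w'\in S_{3}$, one expresses $f(w')$ in terms of the BFZ valuations for either of the two reduced words, and shows that any negative $n_{i_{0}}$ produces an explicit $w'$ with $f(w')<f(w)$. Restricting to $\gl_{3}$ is essential here: only a single braid relation must be analyzed, and the tropical $\min$ appearing in Kamnitzer's Plücker relation (Proposition \ref{lusztig}(3)) precisely governs the valuations of the new coordinates $t'_{i}$, which is exactly what makes the case-by-case comparison close.
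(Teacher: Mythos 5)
Your setup mirrors the paper's: normalize so $f_{B_{0}}(y)=0$, invoke Lemma~\ref{decompu} to write $y=[\byy_{\bii}(t_{\bullet})^{-1}]$, and then compare against Kamnitzer's parametrization. But the crux you identify --- ``show $n_{i}\geq 0$'' --- is not the issue. By Proposition~\ref{paramv}(1), the $n_{i}=\val(t_{i})$ are precisely the lengths of the edges of $\ec(y)$ along the $\bii$-path, and these are automatically nonnegative because $\ec(y)$ is the convex hull of a \emph{positive} $(G,T)$-orthogonal family (Arthur's proposition). So the contradiction you propose from ``$n_{i_{0}}<0$'' can never occur, and the whole last paragraph addresses a vacuous hypothesis.

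The genuine gap is in the step ``Combined with Kamnitzer's uniqueness result (Proposition~\ref{lusztig}), this forces $\ec(y)=P^{\bii}(n_{\bullet})$.'' Proposition~\ref{paramv}(1) tells you that the vertices of $\ec(y)$ along the $\bii$-path agree with those of $P^{\bii}(n_{\bullet})$, and hence that $y\in S^{\bii}(n_{\bullet})$ --- but that is far from the claim that $\ec(y)$ \emph{is} $P^{\bii}(n_{\bullet})$; a non-generic point of the MV-cycle has a strictly smaller polytope. You still have to determine the two remaining vertices, i.e., the $\bii'$-path with $\bii'=(212)$, and you still have to verify that the resulting $\bii'$-Lusztig datum $n'_{\bullet}$ and the original $n_{\bullet}$ satisfy Kamnitzer's braid relation (Proposition~\ref{lusztig}(3)). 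This is precisely where the work is: via Proposition~\ref{para2}, $n'_{\bullet}$ is computed from $\val(t'_{1}),\val(t'_{2}),\val(t'_{3})$, and these reproduce the tropicalized braid relation \emph{if and only if} there is no cancellation, i.e., $\val(t_{1}+t_{3})=\min\{\val(t_{1}),\val(t_{3})\}$. It is the ``cancellation'' case $\val(t_{1}+t_{3})>\min\{\val(t_{1}),\val(t_{3})\}$ that the minimality of $w$ must be used to rule out: cancellation forces $n_{1}=n_{3}$, hence $P^{\bii}(n_{\bullet})$ is a Weyl polytope, and then $\ec(x)\subsetneq P^{\bii}(n_{\bullet})$ together with Weyl-symmetry gives $\langle s_{2}\rho,\,\lambda_{s_{2}}-\lambda_{s_{2}w_{0}}\rangle < \langle \rho,\,\lambda_{1}-\lambda_{w_{0}}\rangle$, contradicting the choice of $w$. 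Your plan never articulates this dichotomy or the Weyl-polytope reduction, so as written it does not close.
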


\begin{proof}

Up to permutation by $W$ and translation by $X_{*}(T)$, we can suppose that $w=1$ and $\lambda_{1}=0$. In particular, we have $x\in U_{0}(F)K/K$.

For the group $\gl_{3}$, there are only $2$ reduced words for $w_{0}$:
$
\bii=(121), \, \bii'=(212).
$
Let $n_{\bullet}$ (resp. $n'_{\bullet}$) be the sequence of lengths of the edges along the path of $\ec(x)$ passing through the $\lambda_{w^{\bii}_{j}}$'s (resp. $\lambda_{w^{\bii'}_{j}}$'s), $j=1,2,3$. We need to prove that $n_{\bullet}, n'_{\bullet}$ satisfy the braid relation in proposition \ref{lusztig} (3).

By lemma \ref{decompu}, there exist $t_{\bullet}, t'_{\bullet}\in F^{3}$ such that
$$
x=[\byy_{\bii}(t_{\bullet})^{-1}]=[\byy_{\bii'}(t'_{\bullet})^{-1}].
$$

By proposition \ref{paramv} (1), we have
$$
n_{i}=\val(t_{i}) \text{ and } n'_{i}=\val(t_{i}'),\quad i=1,2,3.
$$ 
So $\ec(x)=P^{\bii}(n_{\bullet})\cap P^{\bii'}(n'_{\bullet})$.

By proposition \ref{para2}, we have the relation
$$
t_{1}'=\frac{t_{2}t_{3}}{t_{1}+t_{3}},\quad t_{2}'=t_{1}+t_{3},\quad t_{3}'=\frac{t_{1}t_{2}}{t_{1}+t_{3}}.
$$

There are two cases to consider. If $\val(t_{1}+t_{3})=\min\{\val(t_{1}),\val(t_{3})\}$, then $n_{\bullet},n'_{\bullet}$ satisfy the 3-braid relation in proposition \ref{lusztig} (3), and $\ec(x)$ is a MV-polytope.

If $\val(t_{1}+t_{3})>\min\{\val(t_{1}),\val(t_{3})\}$, then $\val(t_{1})=\val(t_{3})$. This implies that $n_{1}=n_{3}$ and $P^{\bii}(n_{\bullet})$ is a Weyl polytope. Let $P^{\bii}(n_{\bullet})=\ec((\mu_{w})_{w\in W})$. Since $\ec(x)\subsetneq P^{\bii}(n_{\bullet})$ and they share the same $\bii$-path along the edges, we get
\begin{eqnarray*}
\langle s_{2}\cdot \rho,\, \lambda_{s_{2}}-\lambda_{s_{2} w_{0}} \rangle &< &\langle s_{2}\cdot \rho,\, \mu_{s_{2}}-\mu_{s_{2} w_{0}} \rangle\\
&=& \langle  \rho,\, \mu_{1}-\mu_{w_{0}} \rangle \quad (\text{since } P^{\bii}(n_{\bullet})\text{ is a Weyl polytope})\\
&=& \langle  \rho,\, \lambda_{1}-\lambda_{w_{0}}\rangle. 
\end{eqnarray*}

This is in contradiction with the hypothesis that $\langle  \rho,\, \lambda_{1}-\lambda_{w_{0}}\rangle$ is minimal. So this case is impossible, and $\ec(x)$ is a MV polytope.

\end{proof}

In view of this result, we will call permutations by $W$ of the MV-polytopes (resp. cycles) the ``\emph{generalized}'' MV-polytopes (resp. cycles). The corollary \ref{closuremv} implies that $\xx(\ec(x))$ is irreducible, so we get

\begin{cor}

For any $x\in \xx$, the truncated affine grassmannian $\xx(\ec(x))$ is a generalized MV-cycle.

\end{cor}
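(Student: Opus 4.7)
The plan is to combine Proposition~\ref{reduce to mv} with Corollary~\ref{closuremv}: the former produces an element of $W$ making a Weyl translate of $\ec(x)$ into a genuine MV-polytope, while the latter identifies $\xx(\ec(x))$ with the closure of its GGMS stratum. Transporting the situation by a lift of this Weyl element will then realise $\xx(\ec(x))$ as a Weyl translate of an MV-cycle.

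First I would verify that the GGMS stratum $S((\lambda_v)_{v\in W})$, with $\lambda_v := H_{v\cdot B_0}(x)$, is non-empty. This is essentially by construction: from the definition of the retraction $f_{v\cdot B_0}$ one has $x\in U_v(F)\ep^{\lambda_v}K/K = S^v_{\lambda_v}$ for every $v\in W$, so $x$ itself lies in the intersection. Corollary~\ref{closuremv} then yields $\xx(\ec(x)) = \overline{S((\lambda_v)_{v\in W})}$, and Proposition~\ref{reduce to mv} supplies $w\in W$ for which $w^{-1}\cdot\ec(x)$ is an MV-polytope.

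Next I would track how the lift $\bar{w}^{-1}$ acts on the semi-infinite orbits. Using $\bar{w}^{-1} U_v \bar{w} = U_{w^{-1}v}$, the fact that $\bar{w}^{-1}\in G(\co)$ normalises $K$, and $\bar{w}^{-1}\ep^{\lambda_v} = \ep^{w^{-1}\lambda_v}\bar{w}^{-1}$, one checks that $\bar{w}^{-1}\cdot S^v_{\lambda_v} = S^{w^{-1}v}_{w^{-1}\lambda_v}$. Consequently $\bar{w}^{-1}$ maps the GGMS stratum of $(\lambda_v)_{v\in W}$ onto that of the $(G,T)$-orthogonal family encoding $w^{-1}\cdot\ec(x)$. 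Taking closures and applying Corollary~\ref{closuremv} once more identifies $\bar{w}^{-1}\cdot\xx(\ec(x))$ with the MV-cycle of polytope $w^{-1}\cdot\ec(x)$, whence $\xx(\ec(x))$ is a generalized MV-cycle.

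The only point demanding care is the orbit-level compatibility of the $W$-action in the third paragraph; once the formulas for conjugation of $U_v$ by $\bar{w}$ and for $\bar{w}$ normalising both $T$ and $K$ are in hand it is routine, and the substantive content has been supplied by Proposition~\ref{reduce to mv}.
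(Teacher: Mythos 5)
Your argument is correct and is essentially the paper's own (very terse) proof unpacked: the paper simply invokes Corollary~\ref{closuremv} for irreducibility and Proposition~\ref{reduce to mv} for the Weyl translate, while you make explicit the transport of semi-infinite orbits and GGMS strata under left multiplication by $\bar{w}^{-1}$. The orbit-level bookkeeping you flag ($\bar{w}^{-1}S^{v}_{\lambda_v}=S^{w^{-1}v}_{w^{-1}\lambda_v}$, using $\bar{w}^{-1}\in K$) is exactly the routine verification the paper leaves implicit.
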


\subsection{An affine paving}

In order to pave $\xx(\ec(x))$, we need the non-standard affine paving of $\xx$ found in \cite{chen1}: Let $I$ be the standard Iwahori subgroup of $G(F)$, i.e. it is the inverse image of $B_{0}$ under the natural reduction $G(\co)\to G(k)$. For $\baa\in \bz^{3}$, let $I_{\baa}=\Ad(\ep^{\baa})I$.

\begin{prop}\label{pave}

Let $\baa\in \bz^{3}$.

\begin{enumerate}

\item
For $\lambda\in X_{*}^{+}(T)$ such that $\lambda_{1}\geq \lambda_{2}=\lambda_{3}$, we have the affine paving
$$
\sch(\lambda)=\bigsqcup_{\lambda'\in \sch(\lambda)^{T}}\sch(\lambda)\cap I_{\baa}\ep^{\lambda'} K/K.
$$
The intersection $\sch(\lambda)\cap I_{\baa}\ep^{\lambda'} K/K=J_{\baa,\lambda,\lambda'}\ep^{\lambda'}K/K$, where $J_{\baa,\lambda,\lambda'}$ is the open and closed sub-$k$-variety of $I_{\baa}$ of the matrices $(x_{i,j})$ such that $x_{i,i}\in \co$ and that
$$
 \val(x_{i,j})\geq m_{i,j},\quad \forall i\neq j,
$$ 
where $m_{i,j}=\max(a_{i}-a_{j}+\frac{i-j}{3},\,\lambda_3-\lambda'_{j})$. Moreover, the inclusion
$$
\sch(\lambda)\cap I_{\baa}\ep^{\lambda'}K/K\subset\overline{(\sch(\lambda)\cap I_{\baa}\lambda''K/K)}
$$
implies that $\lambda'\prec_{I_{\baa}} \lambda''$, where $\prec_{I_{\baa}}$ means the Bruhat-Tits order with respect to $I_{\baa}$.

\item

For $\lambda\in X_{*}^{+}(T)$ such that $\lambda_{1}= \lambda_{2}\geq \lambda_{3}$, we have the affine paving
$$
\sch(\lambda)=\bigsqcup_{\lambda'\in \sch(\lambda)^{T}}\sch(\lambda)\cap I_{\baa}\ep^{\lambda'} K/K.
$$
The intersection $\sch(\lambda)\cap I_{\baa}\ep^{\lambda'} K/K=\hat{J}_{\baa,\lambda,\lambda'}\ep^{\lambda'}K/K$, where $\hat{J}_{\baa,\lambda,\lambda'}$ is the open and closed sub-$k$-variety of $I_{\baa}$ of the matrices $(x_{i,j})$ such that $x_{i,i}\in \co$ and that
$$
 \val(x_{i,j})\geq \hat{m}_{i,j},\quad \forall i\neq j,
$$ 
where $\hat{m}_{i,j}=\max(a_{i}-a_{j}+\frac{i-j}{3},\,-\lambda_1+\lambda'_{i})$. Moreover, the inclusion
$$
\sch(\lambda)\cap I_{\baa}\ep^{\lambda'}K/K\subset\overline{(\sch(\lambda)\cap I_{\baa}\lambda''K/K)}
$$
implies that $\lambda'\prec_{I_{\baa}} \lambda''$.

\end{enumerate}

\end{prop}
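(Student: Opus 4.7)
The plan is to apply the Bruhat--Iwahori decomposition of $\xx$ attached to the conjugated Iwahori $I_{\baa}$, intersect with the closed subvariety $\sch(\lambda)$, and verify that under the partial-flag hypothesis on $\lambda$ each piece is an affine space of the announced form $J_{\baa,\lambda,\lambda'}\ep^{\lambda'}K/K$.

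First I would parameterize each $I_{\baa}$-orbit. The bijection $I_{\baa}/(I_{\baa}\cap\Ad(\ep^{\lambda'})K) \xrightarrow{\sim} I_{\baa}\ep^{\lambda'}K/K$ identifies the orbit with the matrices $(x_{i,j})\in I_{\baa}$ --- satisfying the Iwahori condition $\val(x_{i,j})\geq a_{i}-a_{j}+\frac{i-j}{3}$ (where the fractional part is the standard rounding mnemonic) --- modulo the right stabilizer condition $\val(x_{i,j})\geq \lambda'_{i}-\lambda'_{j}$, and the quotient is visibly an affine space. Next, by \eqref{grequation} applied to the Weyl polytope $\ec((w\cdot\lambda)_{w\in W})$, the closed subvariety $\sch(\lambda)$ is cut out by $\val(\Delta_{w\varpi_{i}}(g^{-1}))\geq -\langle\lambda,\varpi_{i}\rangle$ for every $w\in W$ and $i=1,2$; substituting $g=x\ep^{\lambda'}$, these become polynomial inequalities in the $x_{i,j}$.

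The technical heart is to show that these polynomial conditions collapse to pure valuation bounds on individual entries. For case (1), with $\lambda_{2}=\lambda_{3}$, I would expand each $2\times 2$ minor $\Delta_{w\varpi_{2}}(g^{-1})$ as a signed sum of products of the entries of $x^{-1}$ (pre-multiplied by suitable powers of $\ep^{-\lambda'_{\bullet}}$) and show that the Iwahori bounds, combined with the level-$1$ constraints $\val(x_{i,j})\geq \lambda_{3}-\lambda'_{j}$, already force each monomial to satisfy the level-$2$ inequality $-\lambda_{1}-\lambda_{2}$. The degeneracy $\lambda_{2}=\lambda_{3}$ is what makes the ``second-step'' Pl\"ucker condition automatic, leaving the level-$1$ bound as the only effective cut; this produces $m_{i,j}=\max(a_{i}-a_{j}+\frac{i-j}{3},\,\lambda_{3}-\lambda'_{j})$ and identifies each non-empty piece with an affine space. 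Case (2) is dual: with $\lambda_{1}=\lambda_{2}$, the level-$1$ conditions become consequences of the level-$2$ ones, yielding $\hat{m}_{i,j}=\max(a_{i}-a_{j}+\frac{i-j}{3},\,-\lambda_{1}+\lambda'_{i})$.

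Disjointness of the cells is inherited from the $I_{\baa}$-stratification of $\xx$, and the closure statement follows from standard affine Bruhat theory: $\overline{I_{\baa}\ep^{\lambda''}K/K}\supset I_{\baa}\ep^{\lambda'}K/K$ iff $\lambda'\prec_{I_{\baa}}\lambda''$, and this order is preserved upon intersecting with the closed subvariety $\sch(\lambda)$. The main obstacle is the collapse argument above: for general $\lambda$ the minor inequalities impose genuine polynomial constraints on the $x_{i,j}$ and the intersection need not be affine, so the success here hinges on the combinatorial rigidity furnished by the small rank of $\gl_{3}$ together with the condition $\lambda_{2}=\lambda_{3}$ (resp. $\lambda_{1}=\lambda_{2}$).
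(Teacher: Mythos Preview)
The paper does not prove this proposition here; it is quoted from the author's earlier work \cite{chen1} and stated without argument, so there is no in-paper proof to compare against.

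Your overall architecture---decompose $\xx$ into $I_{\baa}$-cells and show that on each cell the Schubert condition cuts out an affine subspace---is the right one and is certainly what underlies the result in \cite{chen1}. However, routing the argument through the minor inequalities \eqref{grequation} is unnecessarily indirect, and your bookkeeping of which level produces which bound is off. The level-$1$ minors $\Delta_{w\varpi_{1}}(g^{-1})$ are the entries of the \emph{first row} of $g^{-1}=\ep^{-\lambda'}x^{-1}$; they do not by themselves yield $\val(x_{i,j})\ge\lambda_{3}-\lambda'_{j}$ for all $i,j$, as you assert. (Via the cofactor identity it is actually the level-$2$ minors of $g^{-1}$ that correspond to single entries of $g$, and then only in the third column.) So the ``collapse'' you sketch does not go through as written.

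The clean argument bypasses \eqref{grequation} and uses the lattice description directly. When $\lambda_{2}=\lambda_{3}$ one has
\[
\sch(\lambda)=\bigl\{\,g\co^{3}\ :\ g\co^{3}\subset \ep^{\lambda_{3}}\co^{3},\ \val(\det g)=\lambda_{1}+2\lambda_{3}\,\bigr\},
\]
since every sublattice of $\ep^{\lambda_{3}}\co^{3}$ of the correct covolume automatically has invariant factors dominated by $(\lambda_{1},\lambda_{3},\lambda_{3})$. For $g=x\ep^{\lambda'}$ the inclusion $g\co^{3}\subset\ep^{\lambda_{3}}\co^{3}$ is literally the column condition $\val(x_{i,j})+\lambda'_{j}\ge\lambda_{3}$, which combined with the Iwahori congruences gives $m_{i,j}$ at once---no minor expansion, no collapse argument. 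Case (2) is obtained from this either by the involution $\iota:gK\mapsto (g^{t})^{-1}K$, or by the dual lattice description $\sch(\lambda)=\{L\supset\ep^{\lambda_{1}}\co^{3}\}$; in the latter form the condition lands on $x^{-1}$ rather than $x$, and one then uses that $I_{\baa}$ is a group to translate it back. Your closure-order remark is fine: the stated implication only requires that closure inside $\sch(\lambda)$ be contained in closure inside $\xx$, which is automatic.
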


\begin{rem}

The proposition can also be used to calculate $\sch(\lambda)\cap U_{B}(F)\ep^{\lambda'} K/K$ for $B\in \cp(T)$, for this it is enough to take $\baa\in \bz^{3}$ to be positive enough with respect to $B$.

\end{rem}

Let $\iota:\xx\to \xx$ be the involution sending $gK$ to $(g^{t})^{-1}K$. We make the observation: Up to permutation by $W$ and involution by $\iota$, the MV-polytopes are of the form $P^{\bii}(n_{\bullet})$ with $\bii=(121)$ and $n_{1}\geq n_{3}\geq n_{2}\geq 0$. They are intersection of two triangles as shown in figure \ref{intersection}.

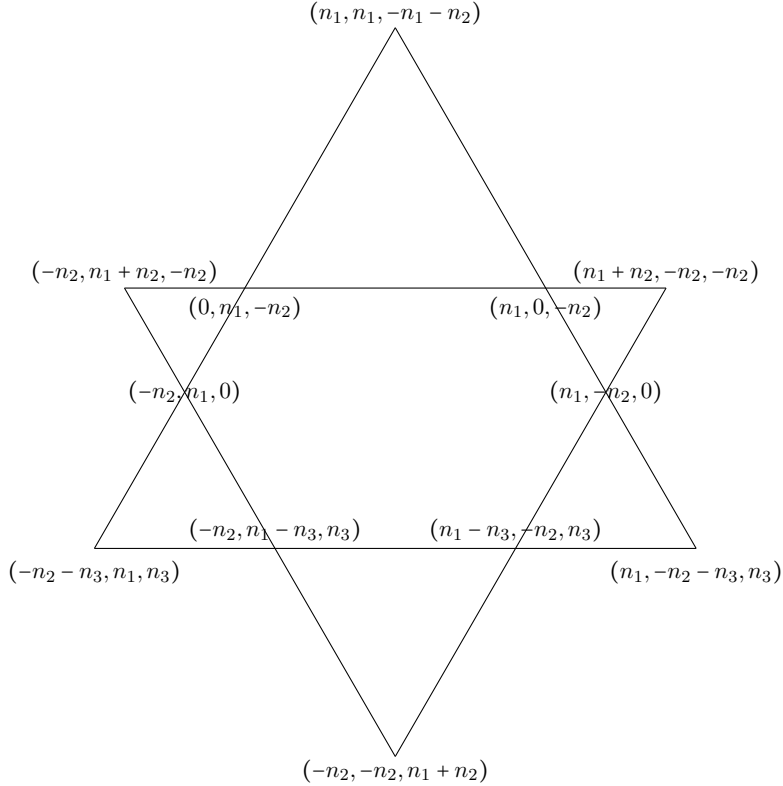
\begin{figure}[h]
\begin{center}
\begin{tikzpicture}[node distance = 2cm, auto, scale=0.8, transform shape]
\draw (-4.5, 1.73)--(4.5, 1.73);
\draw (-4.5, 1.73)--(0,-6.06);
\draw (0,-6.06)--(4.5, 1.73);
\draw (-5, -2.6)--(5, -2.6);
\draw (-5, -2.6)--(0, 6.06);
\draw (5, -2.6)--(0, 6.06);

\node at (0,6.3) {$(n_{1},n_{1},-n_{1}-n_{2})$};
\node at (-5, -3) {$(-n_{2}-n_{3},n_{1},n_{3})$};
\node at (5, -3) {$(n_{1},-n_{2}-n_{3},n_{3})$};

\node at (-4.5, 2) {$(-n_{2},n_{1}+n_{2},-n_{2})$};
\node at (4.5, 2) {$(n_{1}+n_{2},-n_{2},-n_{2})$};
\node at (0, -6.3){$(-n_{2},-n_{2}, n_{1}+n_{2})$};

\node at (-2.5, 1.4) {$(0, n_{1},-n_{2})$};
\node at (2.5,1.4) {$(n_{1},0, -n_{2})$};
\node at (-3.5,0) {$(-n_{2}, n_{1}, 0)$};
\node at (3.5, 0){$(n_{1},-n_{2}, 0)$};
\node at (-2, -2.3){$(-n_{2},n_{1}-n_{3}, n_{3})$};
\node at (2, -2.3){$(n_{1}-n_{3}, -n_{2}, n_{3})$};

\end{tikzpicture}
\caption{$P^{\bii}(n_{\bullet})$ as intersection of two triangles.}
\label{intersection}
\end{center}
\end{figure}

As a consequence of the Bruhat-Tits decomposition, we have 
$$
\xx(P^{\bii}(n_{\bullet}))=\sch(n_{1}+n_{2}, -n_{2},-n_{2})\cap \ep^{\baa}\cdot \sch(n_{3},n_{3},-n_{1}-n_{2}),
$$
where $\baa=(n_{1}-n_{3},n_{1}-n_{3},0)$.

\begin{prop}\label{pavemv}

For any point $x\in \xx$, the truncated affine grassmannian $\xx(\ec(x))$ admits an affine paving, so its cohomological groups are pure in the sense of Deligne.

\end{prop}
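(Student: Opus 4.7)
The plan is to combine the reduction in Proposition~\ref{reduce to mv} with the explicit Iwahori pavings furnished by Proposition~\ref{pave}.

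First I would normalize the data. By Proposition~\ref{reduce to mv}, after acting by $W$ and translating by $X_{*}(T)$---neither of which affects the existence of an affine paving or purity of cohomology---one may assume $\ec(x)=P^{\bii}(n_{\bullet})$ is a genuine MV-polytope. Using in addition the involution $\iota:gK\mapsto (g^{t})^{-1}K$ of $\xx$ to interchange the two reduced words of $w_{0}$ in type $A_{2}$, I would reduce to $\bii=(121)$ and $n_{1}\geq n_{3}\geq n_{2}\geq 0$. The identity displayed just before the proposition then expresses
\[
\xx(\ec(x))=\sch(n_1+n_2,-n_2,-n_2)\cap \ep^{\baa}\cdot \sch(n_3,n_3,-n_1-n_2),
\]
with $\baa=(n_1-n_3,n_1-n_3,0)$. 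The first factor has weight shape $(\ast,\ast=\ast)$ and the second $(\ast=\ast,\ast)$, so that Proposition~\ref{pave}(1) and (2) apply, respectively.

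Next I would fix $\bcc\in \bz^{3}$ and cut both Schubert varieties by the Bruhat-Tits cells $I_{\bcc}\ep^{\lambda'}K/K$. Proposition~\ref{pave}(1) describes $\sch(n_1+n_2,-n_2,-n_2)\cap I_{\bcc}\ep^{\lambda'}K/K$ as $J_{\bcc,\cdot,\lambda'}\ep^{\lambda'}K/K$, a sub-affine space of $I_{\bcc}$ cut out by $x_{ii}\in\co$ together with lower bounds $\val(x_{ij})\geq m_{ij}$. Using $\ep^{\baa}I_{\bcc-\baa}\ep^{-\baa}=I_{\bcc}$, Proposition~\ref{pave}(2) describes the trace of the translated Schubert variety $\ep^{\baa}\cdot \sch(n_3,n_3,-n_1-n_2)$ on $I_{\bcc}\ep^{\lambda'}K/K$ via analogous bounds $\val(x_{ij})\geq \hat{m}'_{ij}$. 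Intersecting the two yields the bounds $\val(x_{ij})\geq \max(m_{ij},\hat{m}'_{ij})$ of the same shape, so every non-empty intersection cell is again an affine space of computable dimension.

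Finally, the closure relations in both parts of Proposition~\ref{pave} are controlled by the same Bruhat-Tits partial order $\prec_{I_{\bcc}}$, so the intersection decomposition is compatible with this order and furnishes an affine paving of $\xx(\ec(x))$. Purity of cohomology then follows from the standard fact that a projective variety admitting an affine paving has cohomology concentrated in even degrees and pure in the sense of Deligne. The main technical obstacle I anticipate is verifying cleanly that the intersection of the two systems of valuation inequalities really yields an affine space; this works precisely because, under the normalization above, both Schubert varieties belong to the special classes covered by Proposition~\ref{pave}, so that combining the two descriptions introduces no nonlinear relations among the matrix entries.
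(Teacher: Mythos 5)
Your proposal is essentially the same as the paper's argument: reduce by $W$, $X_*(T)$ and the involution $\iota$ to $\ec(x)=P^{\bii}(n_{\bullet})$ with $\bii=(121)$, $n_1\geq n_3\geq n_2$, write $\xx(P^{\bii}(n_{\bullet}))$ as $\sch(n_1+n_2,-n_2,-n_2)\cap\ep^{\baa}\cdot\sch(n_3,n_3,-n_1-n_2)$, and pave by Iwahori cells using Proposition~\ref{pave}.

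The one place you diverge is worth noting. You let $\bcc\in\bz^3$ be arbitrary and cut \emph{both} Schubert varieties by $I_{\bcc}$-cells, then intersect the two systems of valuation inequalities. The paper instead takes $\bcc=\baa=(n_1-n_3,n_1-n_3,0)$, the very shift used to translate the second factor. With this choice, $\ep^{\baa}\cdot\sch(n_3,n_3,-n_1-n_2)=\bigsqcup_{\lambda'}I_{\baa}\ep^{\lambda'}K/K$ \emph{is} the Bruhat--Tits decomposition of the second factor (conjugated by $\ep^{\baa}$), so the intersection becomes simply the union of traces of the first Schubert variety on those $I_{\baa}$-cells, and one application of Proposition~\ref{pave}(1) finishes the argument. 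Your version, with arbitrary $\bcc$, requires in addition that the intersection of the two traces inside the cell $I_{\bcc}\ep^{\lambda'}K/K$ is again an affine space, which amounts to checking that the two subvarieties $J_{\bcc,\cdot,\lambda'}$ and $\hat{J}'_{\bcc,\cdot,\lambda'}$ of $I_{\bcc}$ are both saturated under right multiplication by $I_{\bcc}\cap\ep^{\lambda'}K\ep^{-\lambda'}$, so that intersecting images equals the image of the intersection. This is not hard, but it is a genuine extra step, and the justification you offer (``both Schubert varieties belong to the special classes covered'') is not the right reason; the real point is the saturation just described. The paper's choice $\bcc=\baa$ makes this step vacuous for the second factor, and is therefore the clean way to finish.
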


\begin{proof}

By proposition \ref{reduce to mv}, $\ec(x)$ is a MV-polytope up to permutation. Using the involution $\iota$ if necessary, we only need to work out the case when $\ec(x)=P^{\bii}(n_{\bullet})$ with $\bii=(121)$ and $n_{1}\geq n_{3}\geq n_{2}$. Let $\baa=(n_{1}-n_{3},n_{1}-n_{3},0)$. We have
\begin{eqnarray*}
\xx(P^{\bii}(n_{\bullet}))&=&\sch(n_{1}+n_{2}, -n_{2},-n_{2})\cap \ep^{\baa}\cdot \sch(n_{3},n_{3},-n_{1}-n_{2})\\
&=&\bigsqcup_{\lambda'\in X_{*}(T)\cap P^{\bii}(n_{\bullet})} \sch(n_{1}+n_{2}, -n_{2},-n_{2})\cap I_{\baa}\ep^{\lambda'}K/K.
\end{eqnarray*}

By proposition \ref{pave}, the intersections 
$$
\sch(n_{1}+n_{2}, -n_{2},-n_{2})\cap I_{\baa}\ep^{\lambda'}K/K
$$
are isomorphic to affine spaces, this gives us an affine paving of $\xx(P^{\bii}(n_{\bullet}))$.

\end{proof}

%We believe that the truncated affine grassmannian $\xx(\ec(x))$ is irreducible. In particular, it is a generalized MV-cycle. It seems that this is a consequence of the cohomological purity, but we don't have a good argument.

\subsection{A general method to construct affine pavings}

Let $V$ be a projective algebraic variety over $k$ admitting an action of torus $T$ such that the fixed points $V^{T}$ and the $1$-dimensional $T$-orbits $V^{T,1}$ are finite. Suppose that $V$ is cohomologically pure in the sense of Deligne. According to the localization theorem of Goresky, Kottwitz and Macpherson \cite{gkm4}, the $T$-equivariant cohomology of $V$ (hence the ordinary cohomology) can be calculated in terms of the fixed points and the $1$-dimensional $T$-orbits. It is natural to imagine that there is a general method to construct affine pavings from these datum.

Let $\Gamma$ be the graph with vertices $V^{T}$ and with edges associated to each $1$-dimensional $T$-orbits. Two vertices are linked by an edge if and only if they lie on the closure of the corresponding $1$-dimensional $T$-orbit. Let $\mathfrak{o}$ be a total order among the vertices of the graph $\Gamma$. We can associate to it an \textit{acyclic} oriented graph $(\Gamma, \ko)$ such that the source of each arrow is greater than its target with respect to $\ko$. For $v\in \Gamma$, denote by $n^{\ko}_{v}$ the number of arrows having source $v$.

\begin{defn}
The formal Betti number $b^{\ko}_{2i}$ associated to the order $\ko$ is defined as
$$
b_{2i}^{\ko}=\sharp\{v\in \Gamma:\,n^{\ko}_{v}=i\}.
$$
We call
$$
P^{\ko}(t)=\sum_{i}b_{2i}^{\ko}t^{2i}
$$
the formal Poincaré polynomial associated to the order $\ko$.
\end{defn}

\begin{defn}

For $P_{1}(t),\,P_{2}(t)\in \bz[t]$, we say that $P_{1}(t)< P_{2}(t)$ if the leading coefficient of $P_{2}(t)-P_{1}(t)$ is positive. 

\end{defn}

\begin{conj}\label{conjP}

Let $P_{V}(t)$ be the Poincaré polynomial of $V$, then
$$
P_{V}(t)=\min_{\ko}\{P^{\ko}(t)\},
$$
where $\ko$ runs through all the total orders among the vertices of $\Gamma$.
\end{conj}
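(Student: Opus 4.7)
The plan is to prove the conjectured identity by establishing the two inequalities separately: (i) there exists at least one total order $\ko_{0}$ realizing $P^{\ko_{0}}(t) = P_{V}(t)$, and (ii) for every total order $\ko$, $P^{\ko}(t) \geq P_{V}(t)$ in the sense defined. The geometric engine for (i) is the Bialynicki--Birula decomposition, while for (ii) the guiding principle is GKM localization together with a moment-graph version of the Morse inequalities.

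For (i), I would pick a generic one-parameter subgroup $\phi:\bbg_{m}\to T$, meaning $\langle\phi,\alpha\rangle\neq 0$ for every character $\alpha$ of a one-dimensional $T$-orbit. The Bialynicki--Birula decomposition yields a locally closed stratification $V=\bigsqcup_{v\in V^{T}} V_{v}^{+}$ into attracting cells, and the weight decomposition of $T_{v}V$ (finite-dimensional because $V^{T,1}$ is finite) identifies $\dim V_{v}^{+}$ with the number of edges at $v$ whose weight pairs positively with $\phi$. If one orders vertices by $v>w$ whenever the edge joining them is $\phi$-positive at $v$ and extends arbitrarily to a total order $\ko_{0}$, then $n^{\ko_{0}}_{v}=\dim V_{v}^{+}$. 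To conclude $P_{V}(t)=P^{\ko_{0}}(t)$ one needs the BB stratification to be an affine paving; this is classical when $V$ is smooth projective, and in the broader cohomologically pure setting is the content of the companion affine-paving hypothesis already built into the conjectural framework of \S3.4.

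For (ii), I would use the GKM injection $H^{*}_{T}(V;\overline{\bq}_{l})\hookrightarrow \bigoplus_{v\in V^{T}} H^{*}_{T}(\mathrm{pt})$ to construct, for each $v$, an equivariant class $\xi_{v}^{T}$ of degree $2n^{\ko}_{v}$ whose restriction to $v$ is the product $\prod_{e}\alpha_{e}$ over the $n^{\ko}_{v}$ edges $e$ with source $v$, and whose restriction vanishes at every $u\prec_{\ko}v$; values at larger vertices are forced inductively by the GKM congruences. Descending the $\xi_{v}^{T}$ to the non-equivariant quotient $H^{*}(V)$ yields classes $\bar\xi_{v}$ of degree $2n^{\ko}_{v}$ that span $H^{*}(V)$, the spanning being checked by showing that the $\xi_{v}^{T}$ form a basis for the associated graded of the ``vanishing below $v$'' filtration on $H^{*}_{T}(V)$. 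A spanning set of the prescribed degrees then gives $P_{V}(t)\leq P^{\ko}(t)$ in the leading-coefficient order.

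The main obstacle is step (ii) for orders $\ko$ that do not arise from any one-parameter subgroup: for such exotic orders no geometric flow produces the classes $\xi_{v}^{T}$, so their existence must be derived purely from the combinatorics of the moment graph $(\Gamma,\ko)$. A plausible route is an induction on flips along edges of $\Gamma$, showing that any two acyclic orientations are joined by a sequence of edge reversals that individually either preserve $P^{\ko}(t)$ or modify it by a correction of controlled sign. Ensuring consistency of these flips around triangles and longer cycles in $\Gamma$, where several GKM relations must close up simultaneously, is where I expect the real technical difficulty, and is presumably why the author formulates the assertion as a conjecture rather than a theorem.
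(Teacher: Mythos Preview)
The paper does not prove this statement in general: it is explicitly formulated as a conjecture, and the only case treated is that of truncated affine grassmannians for $\gl_{3}$, where the equality is recorded at the end of \S3.5 as ``a consequence of the paving process'' after an explicit affine paving has been produced by hand. There is thus no general proof in the paper to compare against; your proposal is an attempt at an open problem.

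Your step (i) is circular. You reduce the existence of an order $\ko_{0}$ with $P^{\ko_{0}}(t)=P_{V}(t)$ to the claim that the Bialynicki--Birula attracting cells form an affine paving, and then justify this by appealing to ``the companion affine-paving hypothesis already built into the conjectural framework of \S3.4.'' That hypothesis is precisely Conjecture~\ref{conjA}, which is just as unproved as Conjecture~\ref{conjP}; you cannot assume one to establish the other. For singular $V$ there is no a priori reason the attracting set $V_{v}^{+}$ should be an affine space, nor that its dimension should coincide with the number of $\phi$-positive edges at $v$.

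Your step (ii) has a more serious flaw: the classes $\xi_{v}^{T}$ you describe cannot exist for an arbitrary total order $\ko$, because if they did, the argument would prove too much. By purity and localization one has $\sum_{i}\dim H^{2i}(V)=|V^{T}|$, so a spanning family $\{\bar\xi_{v}\}_{v\in V^{T}}$ of exactly $|V^{T}|$ homogeneous elements would automatically be a basis, forcing $P_{V}(t)=P^{\ko}(t)$ for \emph{every} order $\ko$, not merely $P_{V}(t)\leq P^{\ko}(t)$. This is already false for $V=\bp^{1}\times\bp^{1}$ with the diagonal torus: labelling the fixed points $00,01,10,11$ and taking the order $11\succ 00\succ 10\succ 01$, one finds $n^{\ko}_{11}=n^{\ko}_{00}=2$ and $n^{\ko}_{10}=n^{\ko}_{01}=0$, hence $P^{\ko}(t)=2+2t^{4}\neq 1+2t^{2}+t^{4}=P_{V}(t)$. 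Concretely, your class $\xi_{10}^{T}$ would have to lie in degree $0$, vanish at $01$, and restrict to $1$ at $10$; since $H^{0}_{T}(V)$ consists only of constants, no such class exists. The obstruction you mention in your final paragraph is therefore not a technical wrinkle to be managed by edge-flips, but the heart of the matter.
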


There exists an inductive way to decompose $V$ into locally closed sub varieties according to the order $\ko$. To begin with, let $v_{0}$ be the vertex of maximal order. Take a co-character $\chi\in X_{*}(T)$ such that it is expanding on the tangent space $T_{v_{0}}(V)$, let
$$
V^{\ko}(v_{0}):=\{x\in V\mid \lim_{t\to 0}\chi(t)x=v_{0}\}.
$$

It can be proved that $V^{\ko}(v_{0})$ is open in $V$. Repeating this process inductively on the closed $T$-invariant sub variety $V\backslash V^{\ko}(v_{0})$, we get a decomposition of $V$ into locally closed sub varieties $V=\bigsqcup_{v\in V^{T}}V^{\ko}(v)$.

\begin{conj}\label{conjA}

Let $\ko$ be a total order such that $P^{\ko}(t)=P_{V}(t)$. Then the decomposition of $V$ according to the order $\ko$ is a \emph{generalized affine paving} of $V$. By the word ``generalized affine paving'', we mean a decomposition of $V$ into locally closed sub varieties which have the same compact support cohomology as a standard affine space $\ba^{n}$.

\end{conj}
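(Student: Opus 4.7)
The plan is to proceed by induction on the cardinality of $V^{T}$, following the stratification constructed in the statement. At the $k$-th step let $V_{k}$ be the closed $T$-invariant subvariety remaining, $v_{0}$ its maximal vertex under $\ko$, and $\chi\in X_{*}(T)$ a cocharacter expanding on $T_{v_{0}}V_{k}$. The open piece $V^{\ko}(v_{0})$ is the Bialynicki--Birula plus-cell of $v_{0}$ inside $V_{k}$; its expected dimension is $n^{\ko}_{v_{0}}$, since the edges of $\Gamma$ at $v_{0}$ directed toward larger vertices have already been removed in earlier induction steps, so the tangent weights at $v_{0}$ surviving in $V_{k}$ are in bijection with the $n^{\ko}_{v_{0}}$ arrows out of $v_{0}$. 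The complement $V_{k}\setminus V^{\ko}(v_{0})$ inherits the torus action with one fewer fixed point, and the induction continues on it.

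The crux is to show that each $V^{\ko}(v_{0})$ has the compact-support cohomology of $\ba^{n^{\ko}_{v_{0}}}$. When $V_{k}$ is smooth near $v_{0}$, the classical Bialynicki--Birula theorem already identifies the plus-cell with an affine space of that dimension, and the inductive step closes. In the singular case one brings in the cohomological purity hypothesis through the Goresky--Kottwitz--MacPherson localization theorem: purity implies equivariant formality, so $H^{*}_{T}(V)$ is determined by the moment graph $\Gamma$ alone, and $P_{V}(t)$ becomes computable purely combinatorially. A Morse-type equivariant argument then yields, for every admissible order $\ko'$, the coefficientwise inequality $P^{\ko'}(t)\geq P_{V}(t)$, with the hypothesis $P^{\ko}(t)=P_{V}(t)$ singling $\ko$ out as an optimal order for which no slack is available.

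The main obstacle, and the reason the statement is formulated conjecturally, is to leverage this equality to rigidify the stratification on singular pieces. Assuming by induction the conclusion for $V_{k}\setminus V^{\ko}(v_{0})$, whose formal Poincar\'e polynomial differs from that of $V_{k}$ only in the coefficient of $t^{2n^{\ko}_{v_{0}}}$, the long exact sequence of compact-support cohomology for
$$V_{k}=V^{\ko}(v_{0})\sqcup\bigl(V_{k}\setminus V^{\ko}(v_{0})\bigr)$$
must degenerate into short exact sequences, for any extra cohomology would violate $P^{\ko}(t)=P_{V}(t)$; this forces $H^{*}_{c}(V^{\ko}(v_{0}))$ to agree with that of $\ba^{n^{\ko}_{v_{0}}}$. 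Making the induction rigorous requires propagating purity to the closed stratum, verifying that the induced order there remains minimal, and -- the hardest step, I expect -- ruling out the possibility that a singular plus-cell carries parasitic cohomology in the wrong degrees even though its formal dimension is correct. This last obstruction seems to demand a local $T$-equivariant model at each $v_{0}$, perhaps through a formal slice compatible with the torus action, and is where the genuine difficulty of the conjecture lies.
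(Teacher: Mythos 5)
This statement is Conjecture~\ref{conjA} in the paper; it is presented as a \emph{conjecture}, and the paper gives no proof of it. So there is no argument of the paper's to compare against — what you are offering is a roadmap toward a possible proof, and you yourself flag the deepest gap at the end. That said, your outline contains several implicit leaps worth naming precisely, because they are where the real difficulty lies, not just where you say it lies.

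First, your dimension count for the plus-cell — that the tangent weights of $V_k$ at $v_0$ are in bijection with the $n^{\ko}_{v_0}$ outgoing arrows — is only automatic when the ambient variety satisfies the full GKM hypotheses (e.g.\ tangent weights at each fixed point distinct and in bijection with edges) and when $V_k$ inherits those hypotheses. Neither is given for free for a singular $V$, and even less so after passing to the closed complement $V_k$. In the paper this identification is a computed fact (Corollary~\ref{dimtangentmv}) in the special case of truncated affine grassmannians, not a general principle. Second, the inequality $P^{\ko'}(t)\geq P_{V}(t)$ that you invoke as coming from ``a Morse-type equivariant argument'' is essentially the paper's Conjecture~\ref{conjP}; you are using one open conjecture to argue for the other, which is fine as motivation but is not a step of a proof. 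Third, the long-exact-sequence argument needs $H^{*}_{c}(V_k)$ to be known and concentrated in even degrees, i.e.\ purity of $V_k$, and purity of $V$ does not pass automatically to the closed $T$-invariant subvariety $V_k$ — this is the ``propagating purity'' issue you mention, and it is genuinely where the argument stalls, since without it the exact sequence does not force $H^{*}_{c}(V^{\ko}(v_0))$ to match affine space. Your closing paragraph correctly identifies that a local $T$-equivariant model near $v_0$ (or some slice-theorem input) is what is missing; I would add that even with such a model one still needs to show the formal polynomial of the complement is again minimal for the induced order, which is a nontrivial combinatorial statement about the graph $\Gamma$, not just a bookkeeping observation.
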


\begin{rem}

These conjectures can be used to verify the cohomological purity of a projective algebraic variety $X$ which admits an action of torus such that the number of fixed points and that of $1$-dimensional orbits are finite. For this, it is enough to verify whether the above paving scheme gives a generalized affine paving of $X$.

\end{rem}

\subsection{More affine pavings}

We apply the above paving scheme to the truncated affine grassmannians, the result will be used to construct affine pavings for the affine Springer fibers in the next section.

First of all, we need to determine precisely the contracting cell 
$$
C_{B}(\ec(x))=\{y\in \xx(\ec(x))\mid f_{B}(y)=f_{B}(x)\},\quad \forall\, B\in \cp(T).
$$ 
As usual, we only need to work with the truncated affine grassmannian $\xx(P^{\bii}(n_{\bullet}))$, with $\bii=(121)$ and $n_{1}\geq n_{3}\geq n_{2}$. To simplify the notation, we enumerate the Weyl chambers clockwisely by $0,\cdots,5$, with the chamber $0$ corresponding to $B_{0}$. Let $B_{i}\in \cp(T)$ be the Borel subgroup corresponding to the $i$-th chamber, let $\lambda_{i}=H_{B_{i}}(x),\,i=0,\cdots, 5$.

\begin{thm}\label{cell}

For all the $B\in \cp(T)$, the contracting cell $C_{B}(\ec(x))=C_{B}(P^{\bii}(n_{\bullet}))$ is isomorphic to the standard affine space of dimension $n_{1}+2n_{2}+n_{3}$. More precisely, we have:

\begin{enumerate}

\item 

$$
C_{B_{0}}(P^{\bii}(n))=\begin{bmatrix} 1& \co &\kp^{n_{1}-n_{3}}\\
&1&\co\\  &&1
\end{bmatrix}^{-1} \begin{bmatrix}\ep^{n_{1}}&&\\ &1&\\ &&\ep^{-n_{2}}
\end{bmatrix}K/K;
$$

\item

$$
C_{B_{1}}(P^{\bii}(n))=\begin{bmatrix} 1& \co &\kp^{n_{1}-n_{3}}\\
&1&\\  & \co &1
\end{bmatrix} \begin{bmatrix}\ep^{n_{1}}&&\\ &\ep^{-n_{2}}&\\ &&1
\end{bmatrix}K/K;
$$

\item

$$
C_{B_{2}}(P^{\bii}(n))=\begin{bmatrix} 1& \co &\\
&1&\\ \kp^{n_{3}-n_{1}} & \co &1
\end{bmatrix} \begin{bmatrix}\ep^{n_{1}-n_{3}}&&\\ &\ep^{-n_{2}}&\\ &&\ep^{n_{3}}
\end{bmatrix}K/K;
$$

\item

$$
C_{B_{3}}(P^{\bii}(n))=\begin{bmatrix} 1& &\\ \co
&1&\\ \co& \kp^{n_{3}-n_{1}}  &1
\end{bmatrix} \begin{bmatrix}\ep^{-n_{2}}&&\\ &\ep^{n_{1}-n_{3}}&\\ &&\ep^{n_{3}}
\end{bmatrix}K/K;
$$

\item

$$
C_{B_{4}}(P^{\bii}(n))=\begin{bmatrix} 1& &\\ \co
&1& \kp^{n_{1}-n_{3}}\\ \co&  &1
\end{bmatrix} \begin{bmatrix}\ep^{-n_{2}}&&\\ &\ep^{n_{1}}&\\ &&1
\end{bmatrix}K/K;
$$

\item

$$
C_{B_{5}}(P^{\bii}(n))=\begin{bmatrix} 1& & \co \\ \co
&1& \kp^{n_{1}-n_{3}}\\ &  &1
\end{bmatrix}^{-1} \begin{bmatrix}\ep^{n_{1}}&&\\&1&\\ &&\ep^{-n_{2}}
\end{bmatrix}K/K.
$$

\end{enumerate}

\end{thm}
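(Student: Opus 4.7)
The plan is to extract each of the six matrix descriptions directly from the defining inequalities \eqref{grequation} combined with the Iwasawa decomposition along the corresponding Borel. By Proposition~\ref{reduce to mv} together with the $W$- and $\iota$-symmetries, I reduce at once to $\ec(x)=P^{\bii}(n_\bullet)$ with $\bii=(121)$ and $n_1\geq n_3\geq n_2$, so that $\xx(P^{\bii}(n_\bullet))=\overline{S^{\bii}(n_\bullet)}$ by Corollary~\ref{closuremv}. For each $B\in\cp(T)$, write $\lambda_B=H_B(x)$ for the associated vertex of the MV-polytope; the retraction $f_B$ gives
$$
C_{B}(P^{\bii}(n_\bullet))=\bigl(U_{B}(F)\,\ep^{\lambda_B}K/K\bigr)\cap \xx(P^{\bii}(n_\bullet)),
$$
so the task is to translate, for each $B$, the chamber-weight minor inequalities into explicit shape constraints on a representative $u\in U_{B}(F)$.

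I would settle $B=B_0$ first. A generic point in the semi-infinite orbit has the form $u\,\ep^{\lambda_0}K$ with $u\in U_{0}(F)$ determined modulo the stabilizer $U_{0}(F)\cap \ep^{\lambda_0}K\ep^{-\lambda_0}$; conjugating $K$ by $\ep^{-\lambda_0}$ identifies this stabilizer with $\{u:u_{12}\in \kp^{n_1},\,u_{23}\in \kp^{n_2},\,u_{13}\in \kp^{n_1+n_2}\}$. Setting $g=u\,\ep^{\lambda_0}$, the six chamber-weight minors of $g^{-1}=\ep^{-\lambda_0}u^{-1}$ yield six valuation inequalities; the three of them corresponding to the chamber weights paired maximally with $\lambda_0$ are saturated at $\lambda_0$ by construction, and the three remaining reduce, after a short matrix computation, to $\val(u_{12})\geq 0$, $\val(u_{23})\geq 0$ and $\val(u_{13})\geq n_1-n_3$. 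Combining the lower bounds from the inequalities with the upper bounds from the stabilizer yields representatives in $\co/\kp^{n_1}$, $\co/\kp^{n_2}$ and $\kp^{n_1-n_3}/\kp^{n_1+n_2}$, of total dimension
$$
n_1+n_2+(n_2+n_3)=n_1+2n_2+n_3,
$$
matching the matrix form in (1).

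As a consistency check I would use Proposition~\ref{paramv}: the map $t_\bullet\mapsto[\byy_{\bii}(t_\bullet)^{-1}]$ from $A(n_\bullet)$ lands in $C_{B_0}$ with dense open image, and expanding the product $\bxx_1(t_3)\bxx_2(t_2)\bxx_1(t_1)$ together with the birational transformation $\eta_{w_0}^{-1}$ recovers the same valuation constraints on the matrix entries of $u$. Moreover, Corollary~\ref{dimtangentmv} gives the equivariant tangent space at $\lambda_0$ the same dimension $n_1+2n_2+n_3$, and since $C_{B_0}$ is the attracting cell of an isolated $T$-fixed point in a smooth local model, it must be an affine space.

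For the five remaining Borels the procedure is identical: swap $U_0$ for $U_{B_i}$, replace $\lambda_0$ by $\lambda_{B_i}$, recompute the stabilizer, and identify which of the six chamber-weight inequalities are saturated at $\lambda_{B_i}$ and which impose the effective lower bounds. The involution $\iota$ pairs opposite cells and left multiplication by a lift $\bar{s}_i$ relates certain adjacent pairs, but no single symmetry handles all six cases at once, because conjugation by $w\in W$ does not preserve the ordering in the $\bii$-Lusztig datum $(n_1,n_2,n_3)$. The main obstacle is therefore the careful bookkeeping across six Iwasawa decompositions and six sets of minor inequalities; the assumption $n_1\geq n_3\geq n_2$ enters precisely in deciding, for each vertex, which of the six chamber-weight inequalities is saturated and which yields the effective valuation constraint.
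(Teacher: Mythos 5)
Your proposal takes a genuinely different route from the paper. The paper's proof decomposes $\xx(P^{\bii}(n_\bullet))$ as $\sch(n_1+n_2,-n_2,-n_2)\cap\ep^{\baa}\cdot\sch(n_3,n_3,-n_1-n_2)$ and then invokes the non-standard Iwahori-based paving (Proposition~\ref{pave}) to compute each Schubert variety's intersection with the semi-infinite orbit $U_0(F)\ep^{\lambda_0}K/K$, taking the intersection of the two resulting closed conditions. You instead extract the cell directly from the chamber-weight minor inequalities \eqref{grequation}. Both routes are viable computational approaches; yours avoids the $\ep^{\baa}$-twisted-Iwahori machinery, at the cost of having to track which of the six chamber-weight inequalities are saturated at each vertex of the polytope.

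There are two issues to fix, one cosmetic and one substantive. The cosmetic one: you set $g=u\ep^{\lambda_0}$ and correctly observe that the minors $\Delta_\varpi(g^{-1})$ are minors of $\ep^{-\lambda_0}u^{-1}$, but then you write the resulting constraints as $\val(u_{12})\geq 0$, $\val(u_{23})\geq 0$, $\val(u_{13})\geq n_1-n_3$. These constraints are on the entries of $u^{-1}$, not of $u$; for the $(1,2)$ and $(2,3)$ entries the distinction is invisible since $(u^{-1})_{12}=-u_{12}$, but $(u^{-1})_{13}=u_{12}u_{23}-u_{13}$, so $\val(u_{13})\geq n_1-n_3$ is \emph{not} the constraint you get. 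This is precisely why the theorem expresses $C_{B_0}$ as $v^{-1}\ep^{\lambda_0}K/K$ with $v$ constrained, rather than putting the constraints on the unipotent factor directly. If you rephrase your constraints as living on $v=u^{-1}$, your computation aligns exactly with the statement to be proved, and the total dimension count $n_1+n_2+(n_2+n_3)$ survives unchanged since the stabilizer subgroup is the same whether you parametrize by $u$ or $u^{-1}$.

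The substantive issue is your final ``consistency check'': you assert that since $C_{B_0}$ is the attracting cell of an isolated $T$-fixed point ``in a smooth local model, it must be an affine space.'' The truncated affine grassmannian $\xx(P^{\bii}(n_\bullet))$ is in general a singular variety (it is an MV-cycle, and MV-cycles for $\gl_3$ can be singular), so there is no smooth local model to appeal to, and the Bia{\l}ynicki-Birula argument for affineness of the attracting cell does not apply automatically. Corollary~\ref{dimtangentmv} tells you the dimension of the Zariski tangent space at $\lambda_0$, not that the cell is smooth. The affineness has to be established by the explicit description you obtain from the minor inequalities (or from Proposition~\ref{pave}, as in the paper), not inferred from tangent-space data. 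With that caveat removed and the $u$-versus-$u^{-1}$ bookkeeping corrected, your method would give a valid alternative proof; you would still need to carry out the five remaining cases explicitly, since as you correctly observe neither $\iota$ nor left multiplication by lifts $\bar{s}_i$ reduces all six to a single one.
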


\begin{proof}

The calculations are similar, we only give the detail for the case (1). Since 
$$
\xx(P^{\bii}(n_{\bullet}))=\sch(n_{1}+n_{2}, -n_{2},-n_{2})\cap \ep^{\baa}\cdot \sch(n_{3},n_{3},-n_{1}-n_{2}),
$$
with $\baa=(n_{1}-n_{3},n_{1}-n_{3},0)$. To calculate 
$$
C_{B_{0}}(P^{\bii}(n_{\bullet}))=\xx(P^{\bii}(n_{\bullet}))\cap U_{0}(F)\ep^{\lambda_{0}}K/K,
$$
we only need to calculate
\begin{eqnarray*}
\sch(n_{1}+n_{2}, -n_{2},-n_{2})\cap U_{0}(F)\ep^{\lambda_{0}}K/K,
\end{eqnarray*}
and 
\begin{eqnarray*}
&&\Big[\ep^{\baa}\cdot \sch(n_{3},n_{3},-n_{1}-n_{2})\Big]\cap U_{0}(F)\ep^{\lambda_{0}}K/K\\
&&= \ep^{\baa}\cdot \Big[\sch(n_{3},n_{3},-n_{1}-n_{2})]\cap U_{0}(F)\ep^{\lambda_{0}-\baa}K/K \Big],
\end{eqnarray*}
and then take their intersection. By proposition \ref{pave}, we get
\begin{eqnarray*}
&&\sch(n_{1}+n_{2}, -n_{2},-n_{2})\cap U_{0}(F)\ep^{\lambda_{0}}K/K,\\
&&=\begin{bmatrix}1&\kp^{-n_{2}}&\co\\ &1&\co \\&&1
\end{bmatrix}\ep^{\lambda_{0}}K/K.
\end{eqnarray*}

Similarly, we get

\begin{eqnarray*}
&&\ep^{\baa}\cdot \Big[\sch(n_{3},n_{3},-n_{1}-n_{2})]\cap U_{0}(F)\ep^{\lambda_{0}-\baa}K/K \Big],\\
&&=\begin{bmatrix} 1&\co & \kp^{n_{1}-n_{3}}\\ &1& \kp^{-n_{3}}\\ &&1 \end{bmatrix}^{-1}\ep^{\lambda_{0}}K/K.
\end{eqnarray*}

Now taking intersection, we get the assertion (1).

\end{proof}

To pave $X=\xx(\ec(x))$, we begin with the following simple observation: 
For any vertex $v\in \Gamma$, let $\wt(v)$ be the number of edges in $\Gamma$ having end points at $v$, we call it the \emph{weight} of the vertex $v$. 
Theorem \ref{cell} tell us that $\wt(H_{B}(x))$ is equal to $\dim(X)$ for all $B\in \cp(T)$, and they are smaller than the weights of other vertices. 
Take $v_{0}=H_{B}(x)$, for any choice of $B\in\cp(T)$, we know that $X(v_{0})=C_{B}(\ec(x))$ is isomorphic to an affine space.

Let $X_{1}:=X\backslash X(v_{0})$ be the complement. 
We claim that $X_{1}$ are unions of several truncated affine grassmannians $X_{1}=\bigcup_{j} \xx(\ec(x^{(1)}_{j}))$. 
To see this, it is enough to remark that, for any $y\in X_{1}$, $\ec(y)$ is contained in one of the several biggest generalized MV-polytopes included in $\ec(x)$ while not including $v_{0}$. 
The algebraic variety $X_{1}$ is projective and $\wtt$-invariant, let $\Gamma_{1}$ be its $1$-skeleton. For any vertex $v\in \Gamma_{1}$, let $\wt_{1}(v)$ be the number of edges in $\Gamma_{1}$ having end points at $v$.

Take $j_{1}$ such that $\xx(\ec(x^{(1)}_{j_{1}}))$ is of maximal dimension. Let $v_{1}=H_{B_{1}}(x^{(1)}_{j_{1}})$, for some $B_{1}\in \cp(T)$, such that
$$
\wt_{1}(v_{1})=\min_{B\in \cp(T)}\{\wt_{1}(H_{B}(x^{(1)}_{j_{1}}))\}.
$$

It is easy to see that $v_{1}$ doesn't lie in the other $\xx(\ec(x^{(1)}_{j}))$'s, so 
$$
X_{1}(v_{1})=C_{B_{1}}(\ec(x_{j_{1}}^{(1)}))
$$ 
is isomorphic to an affine space.

Repeating the above process, we get a sequence of vertices $v_{i}\in \Gamma$ and $X_{i}$ in the following way: Let
$
X_{i}=X_{i-1}\backslash X_{i-1}(v_{i-1}).
$
As above, we have $X_{i}=\bigcup_{j} \xx(\ec(x^{(i)}_{j}))$. Let $\Gamma_{i}$ be the $1$-skeleton of $X_{i}$. For any vertex $v\in \Gamma_{i}$, let $\wt_{i}(v)$ be the number of edges in $\Gamma_{i}$ having end points at $v$.

Take $j_{i}$ such that $\xx(\ec(x^{(i)}_{j_{i}}))$ is of maximal dimension. Let $v_{i}=H_{B_{i}}(x^{(i)}_{j_{i}})$, for some $B_{i}\in \cp(T)$, such that
$$
\wt_{i}(v_{i})=\min_{B\in \cp(T)}\{\wt_{i}(H_{B}(x^{(i)}_{j_{i}}))\}.
$$

Again $v_{i}$ doesn't lie in the other $\xx(\ec(x^{(i)}_{j}))$'s, so 
$$
X_{i}(v_{i})=C_{B_{i}}(\ec(x_{j_{i}}^{(i)}))
$$ 
is isomorphic to an affine space.

In conclusion, we put the total order among the vertices to be $v_{0}\succ v_{1}\succ v_{2}\succ \cdots$, and we get the affine paving
$$
\xx(\ec(x))=\bigsqcup_{i} X_{i}(v_{i}). 
$$

Given any positive $(G,T)$-orthogonal family $(\lambda_{B})_{B\in \cp(T)}$, similar process applies to the truncated affine grassmannian $\xx((\lambda_{B})_{B\in \cp(T)})$.

\begin{thm}

Let $(\lambda_{B})_{B\in \cp(T)}$ be any positive $(G,T)$-orthogonal family, the truncated affine grassmannian $\xx((\lambda_{B})_{B\in \cp(T)})$ admits an affine paving. Further more, its Poincaré polynomial $P(t)$ satisfies
$$
P(t)=\min_{\ko}\{P^{\ko}(t)\},
$$
where $\ko$ runs through all the total order on the graph $\Gamma$.

\end{thm}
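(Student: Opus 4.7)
The plan is to generalize the iterative paving scheme described immediately before the theorem statement, which was established there for $\xx(\ec(x))$, to arbitrary positive $(G,T)$-orthogonal families $(\lambda_B)_{B\in\cp(T)}$. The argument hinges on two ingredients: extending Theorem \ref{cell} so that the contracting cells $C_B(\xx((\lambda_B)_{B\in\cp(T)}))$ are isomorphic to affine spaces for every $B\in\cp(T)$, and verifying that removing such a cell leaves a finite union of strictly smaller truncated affine grassmannians of the same type.

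First I would extend Theorem \ref{cell}. Using the defining equations (\ref{grequation}) together with the Iwasawa-type decomposition $\xx = \bigsqcup_{\mu\in X_{*}(T)} U_B(F)\ep^{\mu}K/K$, the contracting cell equals
$$C_B(\xx((\lambda_B)_{B\in\cp(T)})) = \xx((\lambda_B)_{B\in\cp(T)}) \cap U_B(F)\ep^{\lambda_B}K/K,$$
and the minor inequalities reduce it to a system of valuation conditions on matrix entries in $U_B(F)$. Combined with Proposition \ref{pave} applied after decomposing $\xx((\lambda_B)_{B\in\cp(T)})$ as an intersection of shifted affine Schubert varieties, this should realize $C_B$ as an explicit affine space, exactly as in the proof of Theorem \ref{cell}; by Corollary \ref{dimtangentmv} its dimension matches $\wt(\lambda_B)$.

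I would then inductively peel off contracting cells as in the discussion preceding the theorem. At step $i$, pick $v_i = \lambda_{B_i}$ (for some $B_i$) of minimum weight $\wt_i(v_i)$ in the $1$-skeleton $\Gamma_i$ of the residual variety $X_i$; then $X_i(v_i) = C_{B_i}$ is affine by the previous step. The key point is that $X_{i+1} = X_i \setminus X_i(v_i)$ is again a closed $\wtt$-invariant subvariety, and any $y \in X_{i+1}$ must satisfy $v_i \notin \ec(y)$, so $\ec(y)$ lies in one of the finitely many maximal sub-polytopes of $\ec(X_i)$ avoiding $v_i$. Each such sub-polytope is itself the convex hull of a positive $(G,T)$-orthogonal family with strictly fewer vertices, giving $X_{i+1}$ as a finite union of strictly smaller truncated affine grassmannians to which the induction hypothesis applies.

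The Poincaré polynomial identity will follow from two facts: the greedy construction produces an order $\ko$ for which the paving realizes $P(t) = P^{\ko}(t)$ (since $\dim X_i(v_i) = \wt_i(v_i) = n^{\ko}_{v_i}$); and for any other order $\ko'$, the tangent-space estimate $\dim X^{\ko'}(v) \leq n^{\ko'}_v$ coming from the fact that the outgoing $1$-dimensional $\wtt$-orbits at $v$ span the contracting directions shows $P(t) \leq P^{\ko'}(t)$. The main obstacle is the first ingredient: the polytope $\ec((\lambda_B)_{B\in\cp(T)})$ need not be even a generalized MV-polytope, so the clean ``intersection of two triangles'' picture underpinning the proof of Theorem \ref{cell} is no longer available, and the explicit matrix computation for $C_B$ must proceed via a finer decomposition of the polytope into affine Schubert pieces followed by case-by-case application of Proposition \ref{pave}.
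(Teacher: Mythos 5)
The core iterative strategy (peel off contracting cells, show the residue is again a union of truncated affine Grassmannians) is the right one, but the first and most crucial step of your plan — extending Theorem \ref{cell} to show that $C_B\bigl(\xx((\lambda_B)_{B\in\cp(T)})\bigr)$ is an affine space for \emph{every} positive $(G,T)$-orthogonal family — is a genuine gap, not merely a technical obstacle as you frame it. You acknowledge that $\ec((\lambda_B))$ need not be a generalized MV-polytope, and that is precisely the problem: for such a polytope, $C_B(\xx((\lambda_B)))$ is the union of GGMS strata $S(P)$ over all generalized MV-polytopes $P\subset\ec((\lambda_B))$ sharing the vertex $\lambda_B$ in direction $B$, and when $\ec((\lambda_B))$ violates the tropical Plücker relations there is no single maximal such $P$. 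The contracting cell is then a union of overlapping pieces rather than one affine space, so no amount of ``finer decomposition into affine Schubert pieces'' will make Proposition \ref{pave} yield a single affine space. Your inductive step then collapses, because $X_i(v_i)=C_{B_i}$ being affine is exactly what you assumed at step one.

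The paper's ``similar process'' avoids extending Theorem \ref{cell} at all. One first writes $\xx((\lambda_B)_{B\in\cp(T)}) = \bigcup_j \xx(P_j)$ where the $P_j$ are the maximal generalized MV-polytopes contained in $\ec((\lambda_B))$ (this is automatic since for any point $y$ the polytope $\ec(y)$ is a generalized MV-polytope by Proposition \ref{reduce to mv}). Then one applies the iteration: choose $j$ with $\xx(P_j)$ of maximal dimension, choose a vertex $v$ of $P_j$ of minimal weight, and observe that $v$ lies only in $\xx(P_j)$ and not in the other pieces; consequently $X(v)=C_{B}(P_j)$ for the corresponding chamber $B$, which \emph{is} covered by Theorem \ref{cell} as stated since $P_j$ is a generalized MV-polytope. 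In other words, the paper never needs the contracting cell of the full polytope, only the contracting cell of a single constituent MV-polytope isolated at a well-chosen vertex. Your Poincaré-polynomial argument has a secondary looseness as well: the inequality $\dim X^{\ko'}(v)\le n^{\ko'}_v$ does not by itself give $P(t)\le P^{\ko'}(t)$ without a purity/spectral-sequence argument, whereas the paper derives the identity directly from the fact that the greedy paving realizes $P(t)=P^{\ko}(t)$ for the minimizing order; but fixing this would still leave the main gap in your cell-affinity claim.
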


The result on the Poincaré polynomial is a consequence of the paving process.

\section{Applications to affine Springer fibers}

Given a regular element $\gamma\in \kt(\co)$, the affine Springer fiber at $\gamma$ is defined to be 
$$
\xx_{\gamma}=\{g\in G(F)/K\mid \Ad(g)^{-1}\gamma\in \kg(\co)\}.
$$
It is locally of finite type with dimension
$$
\dim(\xx_{\gamma})=\frac{1}{2}\val(\det(\ad(\gamma):\kg(F)/\kt(F))).
$$

The group $T(F)$ acts on $\xx_{\gamma}$ with one of the orbits $\xx_{\gamma}^{\reg}$ being dense open in $\xx_{\gamma}$, and the free abelian group $\Lambda$ generated by $\chi(\ep),\, \chi\in X_{*}(T)$ acts simply and transitively on the irreducible components of $\xx_{\gamma}$. A point $x=gK\in \xx_{\gamma}$ lies in $\xx_{\gamma}^{\reg}$ if and only if the image of $\Ad(g)^{-1}\gamma$ under the reduction map $\kg(\co)\to \kg$ is a regular element. According to \cite{gkm3}, they have the property:
\begin{equation}\label{gkmbound}
H_{B}(x)-H_{B'}(x)=\val(\alpha_{B,B'}(\gamma))\cdot \alpha_{B,B'}^{\vee}, \quad \text{ for any adjacent }B,B'\in \cp(T),
\end{equation}
where $\alpha_{B,B'}$ is the unique root which is positive with respect to $B$ while negative with respect to $B'$, and $\alpha_{B,B'}^{\vee}$ is the coroot associated to it.

Let $x_{0}\in \xx^{\reg}_{\gamma}$, let 
$$
F_{\gamma}=\{y\in \xx_{\gamma}\mid \ec(y)\subset \ec(x_{0}),\,\nu_{G}(y)=\nu_{G}(x_{0})\},
$$
it is called \emph{the fundamental domain} of $\xx_{\gamma}$ with respect to $\Lambda$. It is independent of the choice of $x_{0}$ up to translation by $\Lambda$. As mentioned in the introduction, the conjecture \ref{gkmconj} of Goresky, Kottwitz and Macpherson is equivalent to the conjecture \ref{fundamental}.

We believe that the general method in \S3.4 will give affine pavings of $F_{\gamma}$, although the conditions there are not quite satisfied. The torus $T$ acts on $F_{\gamma}$ with finitely many fixed points $F_{\gamma}^{T}$, but the $1$-dimensional $T$-orbits are not discrete. This can be overcome as follows: Let $F_{\gamma}^{T,1}$ be the union of $1$-dimensional $T$-orbits in $F_{\gamma}$, then $\wtt$ acts on it with finitely many $1$-dimensional orbits $F_{\gamma}^{\wtt,1}$. Let $\Gamma$ be the graph with vertices $F_{\gamma}^{\wtt}=F_{\gamma}^{T}$ and with edges associated to each $1$-dimensional $\wtt$-orbits. Two vertices are linked by an edge if and only if they lie on the closure of the corresponding $1$-dimensional $\wtt$-orbit. According to Goresky-Kottwitz-Macpherson \cite{gkm4} and Chaudouard-Laumon \cite{cl}, the $\wtt$-equivariant cohomology
$$
H^{*}_{\wtt}(F_{\gamma}):=H^{*}_{T}(F_{\gamma})\otimes_{H^{*}_{T}(\mathrm{pt})} H^{*}_{\wtt}(\mathrm{pt})
$$
of $F_{\gamma}$ can be expressed in terms of the graph $\Gamma$ as if the torus $\wtt$ acts on $F_{\gamma}$.

In the following, we work with the group $G=\gl_{3}$. We will construct a family of cohomologically pure ``\emph{truncated}'' affine Springer fibers, i.e. intersections of the form $\xx(\ec(x))\cap \xx_{\gamma}$ for some $x\in \xx$.

\subsection{A criterion}

We will determine whether the intersection $\xx_{\gamma}\cap C_{B}(\ec(x)), \, x\in \xx$, is isomorphic to an affine space.

For each $w\in W$, let $\lambda_{w}=f_{w\cdot B_{0}}(x)$. For $\alpha\in \Phi_{w\cdot B_{0}}^{+}$, let $l_{\alpha}^{w}=l_{\alpha}^{w\cdot B_{0}}$ be the number of $1$-dimensional $\wtt$-orbits contained in $\xx_{\gamma}\cap C_{w\cdot B_{0}}(\ec(x))$. As in corollary \ref{dimtangentmv}, we have
$$
\dim\big(T_{\lambda_{w}}(\xx_{\gamma}\cap \xx(\ec(x)))\big)=\sum_{\alpha\in \Phi_{w\cdot B_{0}}} l_{\alpha}^{w},
$$ 
and 
$$
\dim\big(T_{\lambda_{w}}(\xx_{\gamma}\cap \xx(\ec(x)))_{\alpha}\big)=l_{\alpha}^{w},
$$ 
where the subscript $\alpha$ refers to the $\alpha$-eigenspace of the tangent space under the action of the torus $T$. As usual, we only need to work out the case when $\ec(x)=P^{\bii}(n_{\bullet})$ with $\bii=(121), \, n_{1}\geq n_{3}\geq n_{2}$.

\begin{thm}\label{criterion}

Let $\bii=(121), \, n_{1}\geq n_{3}\geq n_{2}$, let $C_{B}=C_{B}(P^{\bii}(n_{\bullet}))$. Then

\begin{enumerate}

\item 

For $B=B_{0},\,s_{1}\cdot B_{0}$, the intersection $C_{B}\cap \xx_{\gamma}$ is isomorphic to an affine space if and only if 
$$
\sum_{\alpha\in \Phi_{B}^{+}} l_{\alpha}^{B}\leq n_{2}+n_{3}+\val(\alpha_{12}(\gamma)).
$$

\item

For $B= s_{2}s_{1}\cdot B_{0}, w_{0}\cdot B_{0}$, the intersection $C_{B}\cap \xx_{\gamma}$ is isomorphic to an affine space if and only if 
$$
\sum_{\alpha\in \Phi_{B}^{+}} l_{\alpha}^{B}\leq n_{2}+n_{3}+\val(\alpha_{12}(\gamma)).
$$

\item

For $B=s_{2}B_{0}$, the intersection $C_{B}\cap \xx_{\gamma}$ is isomorphic to an affine space if and only if 
$$
\sum_{\alpha\in \Phi_{B}^{+}} l_{\alpha}^{B}\leq n_{1}+n_{2}+\val(\alpha_{23}(\gamma)).
$$

\item

For $B=s_{1}s_{2}\cdot B_{0}$, the intersection $C_{B}\cap \xx_{\gamma}$ is isomorphic to an affine space if and only if 
$$
\sum_{\alpha\in \Phi_{B}^{+}} l_{\alpha}^{B}\leq n_{1}+n_{2}+\val(\alpha_{13}(\gamma)).
$$

\end{enumerate}

\end{thm}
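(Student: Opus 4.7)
The plan is to proceed by direct computation using the explicit parametrizations of the contracting cells given in Theorem \ref{cell}. For each of the four cases, a typical element of $C_{B}(P^{\bii}(n_\bullet))$ is of the form $u^{\pm 1}\,\ep^{\mu_B}\,K$, where $u$ is an upper or lower unipotent matrix depending on three coordinates $a, b, c$ in $F$ with prescribed valuation ranges, and $\mu_B \in X_*(T)$ is explicit. The affine Springer condition $\Ad(u^{\pm 1}\ep^{\mu_B})^{-1}\gamma \in \kg(\co)$ then translates into the requirement that the three off-diagonal entries of $\ep^{-\mu_B}\,u\gamma u^{-1}\,\ep^{\mu_B}$ lie in $\co$. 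Since $\gamma$ is diagonal, these entries can be written in closed form, yielding a system of three valuation inequalities on $(a,b,c)$.

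The key structural feature is that the two inequalities associated to the simple-root entries are each linear in a single coordinate ($a$ or $c$), while the inequality associated to the long-root entry $(1,3)$ has the shape
\[
\val\bigl(b\,\alpha_{13}(\gamma) \pm ac\,\alpha_{12}(\gamma)\bigr) \geq N,
\]
for an explicit integer $N$ depending on $\mu_B$ (with an analogous shape involving $\alpha_{23}(\gamma)$ when $B$ is parametrized by a lower-triangular $u$ or when one passes by a Weyl translate). The quadratic cross-term $ac$ is exactly what threatens to prevent the cut-out locus from being an affine space. I would then show that it can be absorbed by a polynomial change of variable in $b$, turning the third constraint into a linear equation in the shifted coordinate, precisely when the $\ep$-valuations of $\alpha_{13}(\gamma)$ and $\alpha_{12}(\gamma)$ compare in a specific way to the parameters $n_1, n_2, n_3$. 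Once the cross-term is absorbed, the whole defining system is linear and its solution set is an affine space.

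The final step is to identify this valuation comparison with the combinatorial inequality in the statement. By Corollary \ref{dimtangentmv} (applied to $\xx_\gamma \cap \xx(\ec(x))$, as indicated just before the theorem), $l^B_\alpha$ equals the dimension of the $\alpha$-eigenspace of the tangent space of $\xx_\gamma \cap \xx(\ec(x))$ at the fixed point $\lambda_B$, and can be read off directly from the linearization of the defining inequalities: each $l^B_\alpha$ is a linear function of the $n_i$ and of $\val(\alpha_{ij}(\gamma))$, truncated against the ambient lower bound on the corresponding coordinate. A short computation then shows that the bound $\sum_\alpha l^B_\alpha \leq n_j + n_k + \val(\alpha_{ij}(\gamma))$, with indices as prescribed by the theorem in each of the four cases, is exactly equivalent to the dominance of the linear $b$-term over the quadratic cross-term $ac$. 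Conversely, if the bound fails, the tangent space at $\lambda_B$ strictly exceeds the actual dimension of $\xx_\gamma \cap C_B$, obstructing any isomorphism with an affine space.

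The principal obstacle is the uniform bookkeeping for the six Borels: the cocharacter $\mu_B$, the sign in front of the cross-term, and which two of $a, b, c$ enter the $ac$-product all depend on $B$. In practice I would carry out the computation in detail for $B = B_0$ (case (1)), then reduce the other five Borels to this case using either the Weyl-group symmetry or the involution $\iota$ of \S 3.3, which exchanges a Borel with its opposite and swaps upper- and lower-triangular parametrizations. The pairing of Borels observed in cases (1) and (2) of the theorem, which share the common right-hand side $n_2 + n_3 + \val(\alpha_{12}(\gamma))$, reflects precisely this symmetry.
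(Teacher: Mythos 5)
Your proposal follows the same route as the paper: parametrize $C_B$ explicitly via Theorem \ref{cell}, translate the condition $\Ad(u^{\pm1}\ep^{\mu_B})^{-1}\gamma\in\kg(\co)$ into three valuation constraints on the off-diagonal coordinates, observe that the only non-linear constraint is the long-root one with its quadratic cross-term, derive the threshold at which it becomes linearizable, and match that threshold against the explicit formulas for the $l^B_\alpha$. This is precisely the computation the paper carries out (for $B=B_0$ only, with the remaining Borels left as ``similar''), and the numerical check that the valuation condition is equivalent to $\sum_\alpha l^B_\alpha\leq n_2+n_3+\val(\alpha_{12}(\gamma))$ works out as you indicate.

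Two small cautions. First, your bookkeeping of the coordinates is off in the model case: for $B_0$ the long-root $(1,3)$-entry of $\Ad(u^{-1})\gamma$ is $c(\gamma_3-\gamma_1)+ab(\gamma_1-\gamma_2)$, so the quadratic term is $ab$ (product of the two simple-root coordinates) and the coordinate to be shifted is $c$, not the shape $b\,\alpha_{13}(\gamma)\pm ac\,\alpha_{12}(\gamma)$ you wrote; this matters because the shift $c\mapsto c+ab\alpha_{12}(\gamma)/\alpha_{13}(\gamma)$ must also be checked compatible with the ambient constraint $c\in\kp^{n_1-n_3}$, which is where the two-case comparison against $\min\{n_1+n_2,\,n_1-n_3+\val(\alpha_{13}(\gamma))\}$ enters. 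Second, your ``only if'' argument via tangent-space dimension is a legitimate framing, but to make it watertight you still need to verify that when the bound fails the actual dimension of $\xx_\gamma\cap C_B$ is strictly smaller than $\sum_\alpha l^B_\alpha$ (rather than equal with a singularity) — in practice this is exactly the same case analysis of the valuation inequality that gives the ``if'' direction, so you gain nothing over the paper's direct derivation. These are refinements of exposition, not gaps in the method.
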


\begin{proof}

The calculations are similar, we only give details for the first case. By theorem \ref{cell}, we have 
$$
C_{B_{0}}=\begin{bmatrix} 1& \co &\kp^{n_{1}-n_{3}}\\
&1&\co\\  &&1
\end{bmatrix}^{-1} \begin{bmatrix}\ep^{n_{1}}&&\\ &1&\\ &&\ep^{-n_{2}}
\end{bmatrix}K/K;
$$
 
Let $u=\begin{bmatrix}1&a&c\\ &1&b\\ &&1
\end{bmatrix}^{-1}
$, with $a\in \co, b\in \co, c\in \kp^{n_{1}-n_{3}}$. Then $u\ep^{\lambda_{0}}K\in \xx_{\gamma}$ if and only if $\Ad(u^{-1})\gamma \in \Ad(\lambda_{0})\kg(\co)$, i.e.
\begin{eqnarray*}
a(\gamma_{1}-\gamma_{2})\in \kp^{n_{1}},\quad b(\gamma_{2}-\gamma_{3})\in \kp^{n_{2}},\\
c(\gamma_{3}-\gamma_{1})+ab(\gamma_{1}-\gamma_{2})\in \kp^{n_{1}+n_{2}}.
\end{eqnarray*}

Let $c_{ij}=\val(\alpha_{ij}(\gamma))$. Solving the first two equations, we get 
$$
a\in \kp^{\max\{0, n_{1}-c_{12}\}}, \quad b\in \kp^{\max\{0, n_{2}-c_{23}\}}.
$$
The third equation tells us that $C_{B_{0}}\cap \xx_{\gamma}$ is isomorphic to an affine space if and only if
$$
\max\{0, n_{1}-c_{12}\}+\max\{0, n_{2}-c_{23}\}+c_{12}\geq \min\{n_{1}+n_{2},\, n_{1}-n_{3}+c_{13}\}.
$$

On the other hand, it is easy to calculate that
\begin{eqnarray*}
l_{\alpha_{12}}^{B_{0}}&=&n_{1}-\max\{0, n_{1}-c_{12}\},\\
l_{\alpha_{23}}^{B_{0}}&=&n_{2}-\max\{0, n_{2}-c_{23}\},\\ 
l_{\alpha_{13}}^{B_{0}}&=&\min\{c_{13}, n_{2}+n_{3}\}.
\end{eqnarray*}

Putting them in the above equation, we get the assertion (1).

\end{proof}

If the intersection $C_{w\cdot B_{0}}\cap \xx_{\gamma}$ is isomorphic to an affine space, its dimension will be equal to 
$$
\dim\big(T_{\lambda_{w}}(\xx_{\gamma}\cap \xx(\ec(x)))\big)=\sum_{\alpha\in \Phi_{w\cdot B_{0}}} l_{\alpha}^{w}.
$$

\subsection{A family of pure truncated affine Springer fibers}

In this section, we give a family of cohomologically pure truncated affine Springer fibers. Without loss of generality, we can suppose that the root valuation of $\gamma$ satisfies
$$
\val(\alpha_{12}(\gamma))=n_{1},\quad \val(\alpha_{23}(\gamma))=\val(\alpha_{13}(\gamma))=n_{2},
$$
such that $n_{1}\geq n_{2}$. Let $\bii=(121),\,\bnn=(n_{1},n_{2},n_{2})$. According to the equation (\ref{gkmbound}), we have
$$
F_{\gamma}=\xx(P^{\bii}(\bnn))\cap \xx_{\gamma}.
$$

Recall that there is a crystal structure on the set of all the MV-polytopes, i.e. an action of $E_{i},\,F_{i}$ on them. Let $\bjj$ be a finite sequence of alternating $1, 2$ of length $l$. Let 
$$
E_{\bjj}:=E_{j_{1}}E_{j_{2}}\cdots E_{j_{l}}.
$$

\begin{thm}\label{truncationpure}

The truncated affine Springer fibers
$$
[\xx(E_{\bjj}\cdot P^{\bii}(\bnn))]\cap \xx_{\gamma}
$$
admits an affine paving, so it is cohomologically pure.

\end{thm}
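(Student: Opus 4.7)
The strategy is to pave $\xx(E_{\bjj}\cdot P^{\bii}(\bnn))$ using the inductive procedure of \S3.5 and then intersect each contracting cell with $\xx_{\gamma}$, applying the criterion of Theorem \ref{criterion} to show the resulting pieces are affine spaces.

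First, by applying the paving scheme of \S3.5 to $Y:=\xx(E_{\bjj}\cdot P^{\bii}(\bnn))$, one obtains a decomposition
\[
Y=\bigsqcup_i C_{B_i}\bigl(\ec(x^{(i)}_{j_i})\bigr),
\]
where each $\ec(x^{(i)}_{j_i})$ is a (generalized) MV-polytope contained in $E_{\bjj}\cdot P^{\bii}(\bnn)$, chosen inductively by taking at each stage a vertex $v_i=H_{B_i}(x^{(i)}_{j_i})$ of minimal weight in the $1$-skeleton of the current sub-variety. By Theorem \ref{cell} each such contracting cell is isomorphic to an affine space. Intersecting with $\xx_{\gamma}$ gives
\[
Y\cap \xx_{\gamma}=\bigsqcup_i \bigl(C_{B_i}(\ec(x^{(i)}_{j_i}))\cap \xx_{\gamma}\bigr).
\]

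Next, I would verify for every cell that $C_{B_i}(\ec(x^{(i)}_{j_i}))\cap \xx_{\gamma}$ is an affine space by checking the numerical inequality of Theorem \ref{criterion}. After an element of $W$ is applied to bring $\ec(x^{(i)}_{j_i})$ into the standard form $P^{\bii}(m_\bullet)$ with $\bii=(121)$ and $m_1\geq m_3\geq m_2$, the inequality in Theorem \ref{criterion} takes one of four forms involving $\sum_{\alpha\in\Phi_B^{+}} l_\alpha^{B}$ and the root valuations $\val(\alpha_{ij}(\gamma))$. The key arithmetic input is the hypothesis $\val(\alpha_{12}(\gamma))=n_1$ and $\val(\alpha_{23}(\gamma))=\val(\alpha_{13}(\gamma))=n_2$, combined with the observation that the crystal operators $E_j$ act by decreasing Lusztig coordinates (in the reduced word ending in $j$), so every sub-polytope appearing in the paving has Lusztig data bounded, after conversion via the braid relations of Proposition \ref{lusztig}, by $\bnn=(n_1,n_2,n_2)$. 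Plugging the explicit formulas for $l_\alpha^{B}$ derived in the proof of Theorem \ref{criterion} into the inequality should then reduce it to a numerical estimate that follows from the bounds on $m_\bullet$ and the equalities $c_{12}=n_1$, $c_{13}=c_{23}=n_2$.

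The main obstacle is the uniform verification of the inequality across all cells in the paving. The intermediate polytopes $\ec(x^{(i)}_{j_i})$ are only implicitly defined as maximal generalized MV-sub-polytopes of the complement at each stage, so careful combinatorial bookkeeping is required to enumerate them. A more delicate issue is that the tropical $3$-braid move of Proposition \ref{lusztig}(3) is piecewise-linear, so componentwise bounds on Lusztig data with respect to $\bii=(121)$ need not translate directly to componentwise bounds with respect to $\bii'=(212)$; one must keep track of which Borels $B_i\in\cp(T)$ are used and verify the inequality in Theorem \ref{criterion} by a case analysis over the six Weyl chambers, using the alternating structure of $\bjj$ and the symmetry $n_2=\val(\alpha_{23}(\gamma))=\val(\alpha_{13}(\gamma))$ to close each case.
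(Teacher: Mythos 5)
Your plan---pave the truncated affine grassmannian $Y=\xx(E_{\bjj}\cdot P^{\bii}(\bnn))$ by the scheme of \S3.5 and then intersect each contracting cell with $\xx_\gamma$, checking the numerical criterion of Theorem~\ref{criterion}---is the right collection of ingredients, but as written it leaves the central difficulty unresolved. You concede this yourself: ``the main obstacle is the uniform verification of the inequality across all cells.'' The paper does not verify the criterion cell-by-cell over the whole polytope at once; instead it runs a \emph{downward induction on the length of $\bjj$}, and this induction is precisely what makes the verification tractable. You have not identified this structure, and without it the argument does not close.

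Two specific pieces are missing. First, the base case: when $\bjj$ has length $2n_2$, one computes $E_{\bjj}\cdot P^{\bii}(\bnn)=P^{\bii}(n_1-n_2,n_2,0)$, and for this polytope the defining conditions of $\xx_\gamma$ are \emph{automatically satisfied} by every point of $\xx(P^{\bii}(n_1-n_2,n_2,0))$, because the root valuations $c_{12}=n_1$, $c_{13}=c_{23}=n_2$ dominate the relevant Lusztig data. Hence $\xx(E_{\bjj}\cdot P^{\bii}(\bnn))\cap\xx_\gamma=\xx(E_{\bjj}\cdot P^{\bii}(\bnn))$, which is pavable by Proposition~\ref{pavemv}. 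This observation is the engine of the proof, and nothing in your proposal produces it or a substitute for it. Second, the inductive step: given the paving at length $l$, one only needs to pave the \emph{thin complement} $\bigl[\xx(F_{j}E_{\bjj}\cdot P^{\bii}(\bnn))\setminus\xx(E_{\bjj}\cdot P^{\bii}(\bnn))\bigr]\cap\xx_\gamma$ for $j\in\{1,2\}$, and for such a strip the relevant contracting cells can be enumerated and the inequalities of Theorem~\ref{criterion} checked directly in a small case analysis. Trying to verify the criterion directly for all cells of the full polytope---as your proposal does---runs into exactly the piecewise-linearity of the tropical braid move that you flagged; the paper sidesteps this by never having to consider more than one ``ring'' of cells at a time. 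Finally, note that the order $\ko$ produced by the abstract scheme of \S3.5, chosen only by weights in the $\wtt$-skeleton of $Y$, need not be the order for which the cells cut out by $\xx_\gamma$ are affine; the choice of paving order must be made with $\xx_\gamma$ in view, and the inductive reduction is what legitimizes that choice.
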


\begin{proof}

We will give a proof for $\bjj=(1)212\cdots 12$, the other case is similar. We will give an inductive affine paving by the length of $\bjj$.

To begin with, on proves easily that if the length of $\bjj$ is bigger than $2n_{2}$, then $[E_{\bjj}\cdot S^{\bii}(\bnn)]$ is empty and there is nothing to prove.

Let $\bjj=12\cdots 12$ of length $2n_{2}$, we claim that 
\begin{equation}\label{movetomv}
[\xx(E_{\bjj}\cdot P^{\bii}(\bnn))]\cap \xx_{\gamma}=\xx(E_{\bjj}\cdot P^{\bii}(\bnn)),
\end{equation}
from which the assertion follows by proposition \ref{pavemv}. Indeed, we have $E_{\bjj}\cdot P^{\bii}(\bnn)=P^{\bii}(n_{1}-n_{2}, n_{2},0)$, and the equation defining $\xx_{\gamma}$ is automatically satisfied by points in $\xx(P^{\bii}(n_{1}-n_{2}, n_{2},0))$.

Now we need to pave the difference between $[\xx(F_{1}E_{\bjj}\cdot P^{\bii}(\bnn))]\cap \xx_{\gamma}$ and $ [\xx(E_{\bjj}\cdot P^{\bii}(\bnn))]\cap \xx_{\gamma}$, or that between $[\xx(F_{2}E_{\bjj}\cdot P^{\bii}(\bnn))]\cap \xx_{\gamma}$ and $ [\xx(E_{\bjj}\cdot P^{\bii}(\bnn))]\cap \xx_{\gamma}$ according to $\bjj$. The paving goes exactly as the general paving scheme explained in \S3.4. We will indicate the order of paving by figures, and omit the detail of verification of conditions in theorem \ref{criterion}.

In the first case, i.e. when $\bjj=12\cdots 12$, the order to pave the complement is shown in figure \ref{order1}, where $2, 2', 3, 3', \cdots$ means that the order between these two sets of cells doesn't matter.

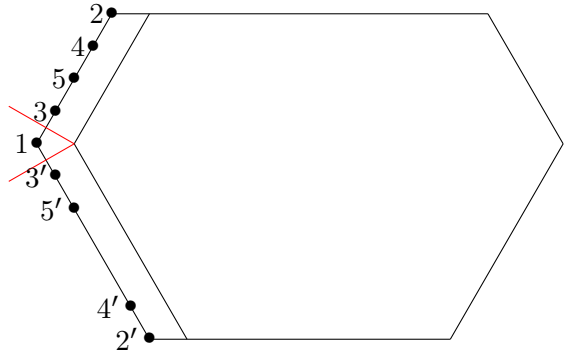
\begin{figure}[h]
\begin{center}
\begin{tikzpicture}[node distance = 2cm, auto]
\draw (-2.5, 1.73)--(2.5, 1.73);
\draw (-2.5, 1.73)--(-3.5, 0);
\draw (3.5, 0)--(2.5, 1.73);
\draw (-2, -2.6)--(2, -2.6);
\draw (-2, -2.6)--(-3.5,0);
\draw (2, -2.6)--(3.5, 0);
\draw (-2, 1.73)--(-3, 0);
\draw (-1.5, -2.6)--(-3,0);
\draw[red] (-3, 0)--(-3.87,0.5); 
\draw[red] (-3, 0)--(-3.87,-0.5);

\node at (-3.7,0) {1};
\node at (-3.5,0) {$\bullet$};

\node at (-2.5, 1.73){$\bullet$};
\node at (-2.7, 1.73){2};

\node at (-3.25, 0.43){$\bullet$};
\node at (-3.45, 0.43){3};

\node at (-2.75, 1.29){$\bullet$};
\node at (-2.95, 1.29){4};

\node at (-3.2, 0.86){$5$};
\node at (-3, 0.86){$\bullet$};

\node at (-2, -2.6){$\bullet$};
\node at (-2.3, -2.6){$2'$};

\node at (-2.25, -2.17){$\bullet$};
\node at (-2.55, -2.17){$4'$};

%\node at (-2.5, -1.73){$\bullet$};
%\node at (-2.7, -1.73){$\ddots$};

\node at (-3.25, -0.43){$\bullet$};
\node at (-3.5, -0.43){$3'$};

\node at (-3, -0.86){$\bullet$};
\node at (-3.3, -0.86){$5'$};

\end{tikzpicture}
\caption{Paving order when $\bjj=12\cdots 12 $}
\label{order1}
\end{center}
\end{figure}

In the second case, i.e. when $\bjj=212\cdots 12$, the order to pave the complement is shown in figure \ref{order2}:

\begin{figure}[ht]
\begin{center}
\begin{tikzpicture}[node distance = 2cm, auto]
\draw (-2.5, 1.73)--(2.5, 1.73);
\draw (-2.5, 1.73)--(-3.5, 0);
\draw (3.5, 0)--(2.5, 1.73);
\draw (-2, -2.6)--(2, -2.6);
\draw (-2, -2.6)--(-3.5,0);
\draw (2, -2.6)--(3.5, 0);
\draw (-2.25, -2.17)--(2.25, -2.17);

\node at (-2,-2.8) {1};
\node at (-2,-2.6) {$\bullet$};

\node at (2, -2.8){2};
\node at (2,-2.6) {$\bullet$};

\node at (-1.5,-2.8) {3};
\node at (-1.5,-2.6) {$\bullet$};

\node at (1.5, -2.8){4};
\node at (1.5, -2.6){$\bullet$};

\node at (-1,-2.8) {5};
\node at (-1,-2.6) {$\bullet$};

\node at (1,-2.8) {6};
\node at (1,-2.6) {$\bullet$};

\node at (0, -2.8){$\cdots$};

\end{tikzpicture}
\caption{Paving order when $\bjj=212\cdots 12 $}
\label{order2}
\end{center}
\end{figure}
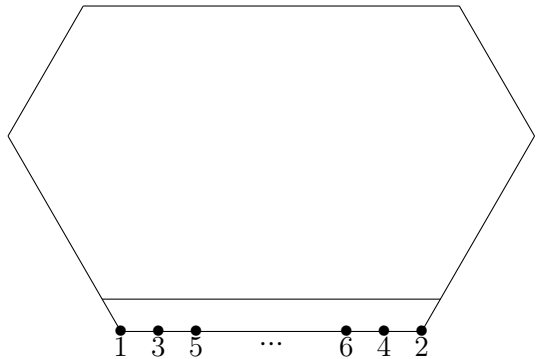

\end{proof}

\newpage

\end{document}